\documentclass[11pt]{article}
\usepackage{amsfonts}
\usepackage{amsmath,amsthm,mathtools,amscd,amssymb,mathrsfs,setspace}
\usepackage{latexsym, epsf, epsfig}
\usepackage{color}
\usepackage[hmargin=.95in,vmargin=.95in]{geometry}
\usepackage{graphicx}
\usepackage{hyperref}
\usepackage{cite}
\usepackage{multirow}
\usepackage{float} % This is just to fix the "table" position for the table of generated coupled elliptic problems

\setcounter{MaxMatrixCols}{10}

\newcommand{\nn}{\nonumber}

\newcommand{\cA}{{\mathcal{A}}}

\newcommand{\cE}{{\mathcal{E}}}

\newcommand{\bu}{\mathbf u}
\newcommand{\bv}{\mathbf v}
\newcommand{\bw}{\mathbf w}
\newcommand{\bF}{\mathbf F}

\newcommand{\bV}{\mathbf V}

\renewcommand{\vec}[1]{\mathbf{#1}}

\newcommand{\norm}[1]{\left\| {#1} \right\|}
\newcommand{\braket}[1]{\left\langle {#1} \right\rangle}

\newcommand{\btau}{\boldsymbol{\tau}}
\newcommand{\bxi}{\boldsymbol{\xi}}
\newcommand{\bzeta}{\boldsymbol{\zeta}}
\newcommand\restri[1]{\left.{#1}\right|_{\Gamma_I}}
\newcommand\weakto\rightharpoonup

\theoremstyle{plain}
\newtheorem{theorem}{Theorem}[section]

\newtheorem{corollary}[theorem]{Corollary}

\newtheorem{definition}{Definition}
\theoremstyle{remark}
\newtheorem{remark}{Remark}[section]
\numberwithin{equation}{section} \numberwithin{theorem}{section}
\numberwithin{remark}{section} \linespread{1}

\begin{document}

\title{Weak and Strong Solutions for A Fluid-Poroelastic-Structure Interaction via A Semigroup Approach}
 \author{{\small \begin{tabular}[t]{c@{\extracolsep{.8em}}c@{\extracolsep{.8em}}c}
     George Avalos &       {  Elena Gurvich} &{  Justin T. Webster} \\
\it Univ. Nebraska-Lincoln  \hskip.2cm  & \hskip.4cm \it Univ. of Maryland, Baltimore County   \hskip.2cm  & \hskip.4cm \it Univ. of Maryland, Baltimore County   \\
\it Lincoln, NE &  \it Baltimore, MD &\it Baltimore, MD\\
gavalos@math.unl.edu &  gurv-3@umbc.edu & websterj@umbc.edu \\
\end{tabular}}}
\maketitle

\begin{abstract}
\noindent  A filtration system comprising a Biot poroelastic solid coupled to an incompressible Stokes free-flow is considered in 3D. Across the flat 2D interface, the Beavers-Joseph-Saffman coupling conditions are taken. In the inertial, linear, and non-degenerate case, the hyperbolic-parabolic coupled problem is posed through a dynamics operator on a chosen energy space, adapted from Stokes-Lam\'e coupled dynamics. A semigroup approach is utilized to circumvent issues associated to mismatched trace regularities at the interface. The generation of a strongly continuous semigroup for the dynamics operator is obtained via a non-standard maximality argument. The latter employs a mixed-variational formulation in order to invoke the Babu\v{s}ka-Brezzi theorem. The Lumer-Philips theorem then yields semigroup generation, and thereby, strong and generalized solutions are obtained. As the dynamics are linear, an argument by density then obtains the existence of weak solutions; we extend this argument to the  case where the Biot compressibility of constituents degenerates. Thus, for the inertial linear Biot-Stokes filtration system, we provide a clear elucidation of strong solutions and a construction of weak solutions, as well as their regularity through associated estimates.
\vskip.25cm
\noindent Keywords: {fluid-poroelastic-structure interaction, poroelasticity, Biot, filtration problem, Beavers-Joseph-Saffman, semigroup methods}
\vskip.25cm
\noindent
{\em 2020 AMS MSC}: 74F10, 76S05, 35M13, 76M30, 35D30 
\vskip.4cm
\noindent Acknowledgments: The first author was partially supported by NSF-DMS 1907823 and Simons Grant MP-TSM-00002793; the third author was partially supported by NSF-DMS 2307538.
\end{abstract}

\section{Introduction}

There are a wide range of scientific and engineering applications where one considers the presence of free fluid flow (e.g., Stokes or Navier-Stokes), adjacent to/interacting with a porous media flow (e.g., Darcy) \cite{infsup1,infsup2}---see also the literature reviews in \cite{yotovLagrange,yotov2022}. The porous medium which houses the flow may also be elastic, introducing a two-way dynamic {\em poroelastic} interaction \cite{coussy,GGbook,poroapps}. The interactive dynamics of these two flows occurs across an interface, itself demonstrating complex physics, as driven through both kinematic and stress-matching conditions. These multi-physics systems are often referred to as fluid-poroelastic-structure interactions, or {\em FPSI}s for short \cite{multilayered,bukac,yotov2015,sunny3,filtration2}.  Two predominant FPSI scenarios are: (i) blood flow around/through biological tissue and (ii) flow adjacent to a soil bed or deposited sediment, as well as a myriad of variations of these situations. From a modeling point of view, we are interested in capturing the dynamics near the interface, how the free-flow dynamics may drive the the saturated poroelastic flow, as well as the associated elastic deformations. Owing to this breadth of applications, the topic of FPSIs has been of notable recent interest, see, e.g., \cite{multilayered,numerical2,both,yotovNSB,MRT,yotovMultipoint,yotov2022,yotovBSeye,sunny1,sunny2,sunny3,rectplate}. 

An initial preponderance of modeling, numerical, and analytical work on poroelasticity was associated to applications in geosciences \cite{wheel,show2000,applied2,applied}, going back to the original modeling and analysis of Biot \cite{biot,biot2,biot3} and Terzaghi \cite{terzaghi}. Recent renewed interest in these systems is driven by biological applications, such as the modeling of organs \cite{GGbook, yotovBSeye,bgsw,gilbert3,gilbert4}, the design of artificial organs and tissue scaffolding \cite{multilayered,rectplate,sunny3,sunny1}, and the optimization of stents \cite{sunny2, AGM,canicbio1,mikelicnon}. In the work at hand, we will focus on the general {\em mathematical aspects} of the so called {\em filtration} problem \cite{showfiltration}; namely, the {\em partial differential equation} (PDE) analysis of the interaction of a 3D free flow with a dynamic 3D poroelastic region\footnote{There are no issues in the analysis in considering 2D spatial domains coupled via a 1D interface.}. In this case, the former will be modeled by the inertial Stokes equations, and the latter by the inertial Biot equations. The Biot model comprises a Lam\'e system of elasticity (which may be elliptic or hyperbolic) and a parabolic pressure equation that is allowed to degenerate \cite{coussy,show2000}. The mathematical works on Biot dynamics can be divided into inertial and quasistatic cases. In the former, the Biot dynamics are formally equivalent to thermoelasticity \cite{show2000}, which is well-studied \cite{thermo,redbook}. In the quasistatic case, the Biot system is an implicit, possibly degenerate, parabolic system; some foundational mathematical works  are \cite{show2000, auriault}. Recent work on the PDE theory of Biot systems has considered nonlinear coupling and time-dependent coefficients, arising due to biological applications \cite{bw,bmw,bgsw,mikelicnon}, along with other recent work on nonlinear Biot systems \cite{both, cao, showrecent,showsu}. 

{\bf To summarize}: the central focus in this paper is the fully inertial Biot-Stokes filtration system, considered  in 3D, coupled across a flat, 2D interface. Though we present an entirely linear analysis of solutions, {\em we believe there is no reference in the present literature} that addresses weak and strong solutions comprehensively. Our work here clearly provides a functional framework, a priori estimates for finite energy solutions, and an associated regularity class, including for certain low-regularity time derivatives in the equations. Specifically we will provide well-posedness of the filtration system for strong solutions in the non-degenerate case, as well as a construction of weak solutions, without any regularization or additional dynamics imposed across the interface. We consider the physically-relevant interface conditions, referred to as the Beavers-Joseph-Saffman conditions, taken on the flat interface, which have been rigorously derived and justified \cite{showfiltration,infsup1,mikelicBJS}. These are slip-type conditions, constituting a challenging hyperbolic-parabolic coupling. We will also consider weak solutions in the case where the Biot compressibility is allowed to vanish, which is a degenerate case \cite{show2000,bmw}. 

\subsection{Literature Review and Further Discussion} 
We now provide a simple reference system to assist in describing relevant past work and the contributions of the work at hand. Precise statements of the Main Results can be found in Section \ref{mainresults}, after the problem has been posed through the relevant operators and spaces.

Consider a poroelastic displacement variable $\bu$ with associated Biot pressure $p_b$, defined on a 3D domain $\Omega_b$, as well as a fluid velocity variable $\bv$ with pressure $p_f$, defined on a 3D domain $\Omega_f$. Let $\cE$ be a positive, second order elasticity-type operator (e.g., the Lam\'e operator) and let $A$ be a positive second order diffusion-type operator. Then we have:
\begin{equation}\label{biotstokesdiscuss}
\begin{cases}
    \rho_b \vec{u}_{tt} +\cE\bu+ \nabla p_b = \vec{F}, &\text{ in } \Omega_b \times (0,T),\\
    [c_0p_b +  \nabla \cdot \vec{u}]_t +A p_b = S, &\text{ in } \Omega_b \times (0,T)\\
    \rho_f \vec{v}_t - \Delta \vec{v} + \nabla p_f = \vec{F}_f,\quad \quad \nabla\cdot\vec{v} = 0, \quad  &\text{ in } \Omega_f \times (0,T).\end{cases}
\end{equation}
For the following discussion, we may formally consider the constants $\rho_b, \rho_f, c_0 \ge 0$; the physical descriptions of the constants are given in Section \ref{coupled1} and \ref{coupled2}. We refer to the non-inertial case, $\rho_b=\rho_f=0$, as {\em quasi-static}, while the case $\rho_b,\rho_f>0$ will be called {\em fully inertial}; when $c_0=0$ the  dynamics may be called {\em degenerate}.

We now point to a number of modeling, homogenization, and computational analyses on Biot-Stokes (or Navier-Stokes) FPSIs akin to \eqref{biotstokesdiscuss} with or without inertial terms \cite{earlynumerics,newhomog,bukac,yotov2015,Sanchez-Palencia,wheel} and the more recent \cite{jnr}. Several of these papers provide constructions of weak solutions in their particular context. However, to our knowledge, there are few papers that mathematically address the fully inertial Biot-Stokes system. Some of the work on inertial Biot systems arises in the context of acoustic models \cite{gilbert3,gilbert4}, in particular for bones \cite{gilbert2,gilbert1}. However, the Biot dynamics are often studied in the quasi-static framework (with $\rho_b=0$, formally), and such analyses can often take $\rho_f=0$ as well; the quasi-static Biot-Stokes or Biot-Navier-Stokes systems have been studied thoroughly over the past several years \cite{yotovLagrange,yotovBSeye,yotovNon-NewtBS,yotov2022,yotovMultipoint,yotovNSB}. In contrast, to the knowledge of the authors, the literature does not directly address well-posedness of a fully inertial system $\rho_b,\rho_f>0$ akin to \eqref{biotstokesdiscuss}. Some numerically-oriented papers are \cite{bukac,yotov2015,filtnum}, which address different computational approaches to the inertial problem, permitting geometric as well as convective nonlinearities; many of the aforementioned coupled fluid-Biot papers point to \cite{showfiltration} as their central reference for well-posedness, and do not themselves focus on that issue. Another notable reference on the inertial filtration problem is found in \cite{filtration2}. This paper handles the inertial Biot-Navier-Stokes coupling, and constructs {\em regular, local solutions} in 3D via a Galerkin construction. Finally, we remark on the central reference for filtration dynamics: \cite{showfiltration}. This short chapter considers a linear filtration system like that in \eqref{biotstokesdiscuss} with $\rho_b,\rho_f,c_0>0$, but {\em takes the Stokes flow to be slightly compressible}, yielding a dynamic equation for conservation of fluid mass. In this framework, a semigroup approach is taken and solutions are obtained by invoking the general {\em implicit theory} in \cite{show2000}, itself developed from the earlier references \cite{auriault,indiana}. In fact, two different representations of the system are considered, and, in one context, singular limits in the compressibility and inertial parameters are mentioned. 

Summarizing the analytically-oriented work on the linear filtration problem, {\em it seems there is no reference focusing on both weak and strong solutions for \eqref{biotstokesdiscuss}}, precisely characterizing spatio-temporal regularity of solutions in relation to the specified data. Thus,  {\em we aim to provide a clear framework for strong (and semigroup) solutions} of the linear system in \eqref{biotstokesdiscuss}, coupling the linear Biot and {\em incompressible Stokes} dynamics, supplemented by the physical Beavers-Joseph-Saffman coupling conditions. From these strong solutions, {\em we will provide a construction of weak solutions for all values $c_0 \ge 0$}. We will consider a toroidal spatial configuration which is conducive to simplifying the analysis away from the interface, and we can focus specifically on the dynamics across a flat and stationary interface, as a first step in this rigorous analysis of Biot-Stokes dynamics. 

The particular spatial configuration studied here is motivated by the recent \cite{multilayered}, where a system like \eqref{biotstokesdiscuss} is considered on a similar torus\footnote{as described below, we consider stacked rectangular boxes, where the lateral boundaries are identified}, but with a poroelastic plate  \cite{poroplate,mikelic} evolving across the interface. Indeed, the plate dynamics their have their own equations of motion, but plate terms also appear in kinematic and stress-matching conditions at the interface; the result is  a multi-layered FPSI model (see also \cite{AGM}) which captures a Biot-poroplate-Stokes interaction. That multi-physics system arises from physical considerations of artificial organ design. In the mathematically-oriented \cite{multilayered}, fully nonlinear regimes are considered for the Biot dynamics, as also in \cite{bmw,bgsw}. Weak solutions are obtained through Rothe's method, in a myriad of parameter regimes, including inertial, non-inertial, and degenerate. We also mention the recent work \cite{rectplate} which considers some regularized Biot dynamics interacting with a Navier-Stokes fluid, in the context of a moving domain which supports a ``rectilinear plate" dynamics on a moving interface; weak solutions are constructed for this moving boundary multilayered FPSI problem. Another cutting-edge approach involves treating the Biot-fluid interface in a ``diffuse" manner \cite{bmbm}. Although the presence of a plate at the Biot-Stokes interface provides some novel features for the dynamics in \cite{multilayered} (and is necessitated by the application considered there), it can be viewed as a sort of mathematical regularization. In particular, by prescribing dynamics on the interface, one mitigates issues associated to hyperbolic-parabolic coupling, namely, the mismatching regularities of velocity traces. A natural question, then, is: {\em can we obtain comparable results for weak solutions as in \cite{multilayered}, in the absence of the plate dynamics across the interface?} 

Our main approach here is to build a viable underlying semigroup theory for inertial Biot-Stokes dynamics, which itself has not appeared in the literature. This approach exploits a priori calculations on a suitably defined (smooth) operator domain, circumventing the need for a priori trace estimates on weak solutions---a highly non-trivial issue for this type of coupling. Subsequently, semigroup (generalized \cite{pazy}) solutions can be constructed in a standard way; following this, as the system is linear, weak solutions will be obtained via density. It is worth noting that, although the constructed weak solution will satisfy an energy estimate, it is independent to show that generic weak solutions are unique. We forgo this uniqueness discussion here and relegate it to a forthcoming work. As the Biot system implicitly contains a Lam\'e system (of dynamic elasticity), our work is built upon earlier analyses on Stokes-Lam\'e coupling \cite{avalos*,avalos,avalos08}. More specifically, based on these earlier references, we utilize a mixed-variational formulation for the resolvent system, and maximality of the dynamics operator is obtained through a non-standard use of the Babu\v{s}ka-Brezzi  theorem.

In conclusion, our central contribution is a complete theory of strong solutions, and a construction of weak solutions, for the fully inertial \eqref{biotstokesdiscuss}. We address well-posedness and  regularity through an operator-theoretic framework for the generation of a $C_0$-semigroup on an appropriately chosen phase space. In contrast to previous work, we operate directly on the incompressible Stokes dynamics.

\subsection{Notation}

We will work in the $L^2(U)$ framework, where $U \subseteq \mathbb R^n$ for $n=2,3$ is a given spatial domain. We denote standard $L^2(U)$ inner products by $(\cdot,\cdot)_U$.   Sobolev spaces of the form $H^s(U)$ and $H^s_0(U)$ (along with their duals) will be defined in the standard way \cite{kesavan}, with the $H^s(U)$ norm denoted by $||\cdot||_{s,U}$, or just $||\cdot||_{s}$ when the context is clear. For a Banach space $Y$ we denote its (Banach) dual as $Y'$, and  the associated duality pairing as $\langle \cdot, \cdot\rangle_{Y'\times Y}$. The notation $\mathscr{L}(X,Y)$ denotes the (Banach) space of bounded linear operators from Banach space $X$ to $Y$, and we write $\mathscr{L}(Y)$ when $X=Y$. We denote $\vec{x} = (x_1,x_2,x_3) \in \mathbb{R}^3$, with associated spatial differentiation by $\partial_i$. For estimates, we will use the notation $A\lesssim B$ to mean that there exists a constant $c$ for which $A \le c B$.

\subsection{Biot Dynamics}\label{coupled1}
As we consider Biot-Stokes coupling, we now provide some background and terminology for Biot's poroelastic dynamics. We follow the modeling exposition  in \cite{GGbook,show2000,bgsw}. Let $\Omega_b \subset \mathbb{R}^3$ denote a domain housing a fully-saturated poroelastic structure. We assume an isotropic and homogeneous porous medium, undergoing small elastic deformations; in this scenario, these dynamics will be modeled by Biot's equations \cite{biot,biot2,terzaghi,coussy}. The elastic (Lagrangian) displacement of the structure is denoted by $\vec{u}$ (with associated velocity $\vec{u}_t$ and acceleration $\vec{u}_{tt}$) and the Biot fluid pressure by $p_b$. The function $\bF$ represents a volumetric force on the elastic matrix, and $S$ represents a diffusive source. Then a poroelastic system on $\Omega_b$ is modeled by the following equations:
\begin{align}\label{biot1}
    \rho_b \vec{u}_{tt} - \mu\Delta\vec{u} - (\lambda + \mu)\nabla(\nabla\cdot\vec{u}) + \alpha \nabla p_b = \vec{F}, &\text{ in } \Omega_b \times (0,T),\\ \label{biot2}
    [c_0p_b + \alpha \nabla \cdot \vec{u}]_t - \nabla \cdot [k\nabla p_b] = S, &\text{ in } \Omega_b \times (0,T).
\end{align}
This system consists of the momentum equation for the balance of forces \eqref{biot1} and the diffusion equation resulting from conservation of mass  \eqref{biot2}. Both $\bu$ and $p_b$ are homogenized quantities in the poroelastic region \cite{coussy,Sanchez-Palencia}. The physical parameters of interest are:
\begin{itemize}
    \item $\rho_b\geq 0$ is the density or {\it inertial parameter} of the poroelastic region; when $\rho_b>0$, the system is said to be {\it inertial}; when $\rho_b = 0$, the system is said to be {\it quasi-static}\footnote{This convention is formal; in practice the physical density is always positive. In the quasi-static case, it is in fact the inertia that is negligible, i.e., $\rho_b\mathbf{u}_{tt} \approx 0$};
    \item $\lambda,\mu>0$ are the standard Lam\'e coefficients of elasticity \cite{ciarletbook};
    \item $\alpha>0$ is the so-called {\em Biot-Willis} (coupling) constant \cite{biot2,coussy}, scaled to the system at hand;
    \item $c_0\ge 0$ is the {\em storage coefficient} of the fluid-solid matrix, corresponding to the net {\em compressibility of constituents}; when $c_0=0$ we say that the system has {\em incompressible constitutents};
    \item $k > 0$ is the permeability (here, constant) of the porous matrix.
\end{itemize}
The so called {\em fluid content} of the system is given  by 
\begin{equation}\label{biotfluid}
    \zeta = c_0p + \alpha \nabla \cdot\vec{u},
\end{equation}
and measures the local storage of fluid \cite{show2000}.
For further details on modeling notions and homogenization, we refer to \cite{coussy,GGbook,auriault,Sanchez-Palencia}. 
The fluid content in the poroelastic case is identified with an {\em entropy measure} in the dynamics of thermoelasticity \cite{show2000,thermo}.
\begin{remark} In applications, we may allow the permeability to be a general function of space and time $k = k(\mathbf x,t)$, or a nonlinear function $k = k(p_b,\vec{u})$, in order to permit recent applications where the fluid content's effect on the permeability is relevant---see, e.g., \cite{bgsw,bmw,multilayered,showsu}. \end{remark} 

 We call $\vec{q}$ the {\em discharge velocity} (or Darcy velocity), given through Darcy's law $\vec{q} = -k\nabla p$. Finally, we assume the elastic  stress  $\sigma^E(\vec{u})$ obeys the linear strain-displacement law \cite{kesavan,show2000} given by
\begin{equation}\label{elasticstress}
    \sigma^E(\vec{u}) = 2\mu\vec{D}(\vec{u}) + \lambda(\nabla\cdot\vec{u})\vec{I},
\end{equation}
where $\vec{D}(\vec{u}) = \frac{1}{2}(\nabla\vec{u} + (\nabla\vec{u})^T)$ is the symmetrized gradient \cite{kesavan,temam}. 
In this case, $\sigma^E(\cdot)$ is the stress tensor, relative to a Lam\'e system of elasticity. 

In this work  {\em we  consider the inertial case}, with both densities $\rho_b,\rho_f>0$, so that the dynamics represent a hyperbolic-parabolic coupled {\em evolution} \cite{redbook}. As described above, much work has been done on the quasi-static Biot system with $\rho_b=0$, both as an  independent system and when coupled to a free fluid flow. We first focus on the non-degenerate case here, with $c_0>0$. We will show semigroup generation, which will provide smooth solutions that can be used as approximants in constructing weak solutions for each $c_0>0$ and fixed finite-energy initial data; finally, we will obtain weak solutions the case $c_0 = 0$ as a limit when the parameter $c_0\searrow 0$ in Section \ref{weaksolsec}.

\begin{remark} We note that the {\it inertial} ($\rho_b>0$) Biot system in the case of compressible constitutents $c_0>0$  is formally equivalent to the system of classical coupled thermoelasticity system, which describes the flow of heat through an elastic structure \cite{redbook,thermo, show2000}. In the incompressible constituents case $c_0 = 0$, the resulting dynamics constitutes an implicit {\it hyperbolic} problem.\end{remark}

\subsection{Physical Configuration and Coupled Model}\label{coupled2}
We now specialize to the toroidal spatial configuration to be considered here, namely two stacked 3D boxes, with the lateral sides (in the $x_1$ and $x_2$ directions) effectively identified (through boundary conditions). As noted above, this configuration is the natural analog to that considered \cite{multilayered}, one of our central references here. Such a setup is convenient for a preliminary analyses of this complex, multi-physics model. Specifically, by considering lateral spatial periodicity, we can focus our attention on the interface dynamics while retaining compactness of the domain; additionally, the top, bottom, and interface boundary planes remain separated. A forthcoming work will consider the physical geometry of a  bounded domain (as in \cite{avalos,avalos*,avalos08}) without invoking lateral periodicity. 

\begin{center}
    \includegraphics[width=.7\textwidth]{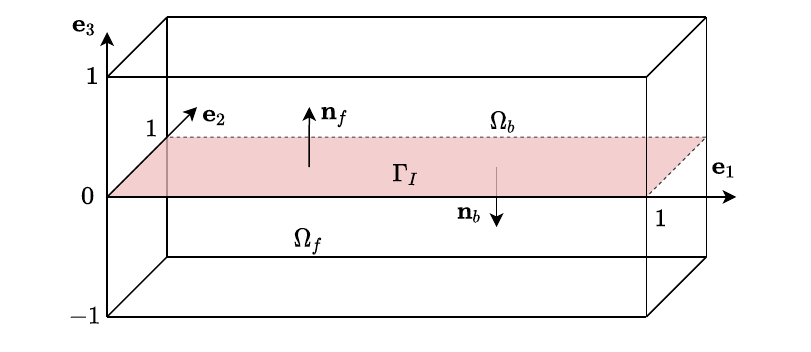}
\end{center}

\noindent Now---and for the remainder of the paper---we take $\Omega_b \equiv (0,1)^3$ to be a fully-saturated poroelastic structure. The dynamics on $\Omega_b$ are modeled by the Biot equations, described above: 
\begin{equation}\label{biot2}
\begin{cases}
    \rho_b \vec{u}_{tt} - \nabla\cdot\sigma^E(\vec{u}) + \alpha \nabla p_b = \vec{F}_b, &\text{ in } \Omega_b \times (0,T),\\
    [c_0p_b + \alpha \nabla \cdot \vec{u}]_t - \nabla \cdot [k\nabla p_b] = S, &\text{ in } \Omega_b \times (0,T).
\end{cases}
\end{equation}
Let $\Omega_f \equiv  (0,1)\times (0,1) \times (0,-1)$ be a region adjacent to $\Omega_b$,  filled with a fluid described by the incompressible Stokes equations. That is, with Eulerian velocity $\vec{v}$ and fluid pressure $p_f$, consider:
\begin{equation}\label{stokes1}
    \rho_f \vec{v}_t - 2\nu \text{div}~\mathbf D(\vec{v}) + \nabla p_f = \vec{F}_f,\quad \quad \nabla\cdot\vec{v} = 0, \quad  \text{ in } \Omega_f \times (0,T).
\end{equation}
Here, $\rho_f$ denotes the density of the fluid at a reference pressure and $\nu$ denotes the shear viscosity of the fluid. 
\begin{quote} \bf{For the remainder of this work, we will take $\rho_b,\rho_f,\nu,\alpha, \lambda,\mu>0$, and $c_0 \ge 0$.} \end{quote}

The two regions are adjoined at an interface $\Gamma_I = \partial\Omega_b \cap \partial \Omega_f = (0,1)^2$. We denote the normal vectors going out of the Biot and Stokes regions by $\vec{n}_b$ and $\vec{n}_f$, respectively. Then we orient the interface, so that on $\Gamma_I$, $\vec{n}_f  = \vec{e}_3 = -\vec{n}_b$. Denote the upper (Biot) and lower (Stokes) boundaries by
\begin{equation} \Gamma_b = \{\phi \in \partial \Omega_b~:~x_3=1\}~\text{ and }~ \Gamma_f = \{\phi \in \partial \Omega_f~:~x_3=-1\}.\end{equation} 
We consider, for these upper and lower boundary components, the homogeneous conditions
\begin{equation}\label{BC1}
    \vec{u} = \vec{0} ~~\text{ and }  ~~p_b = 0 ~\text{ on }\Gamma_b, \quad \text{ and } ~~\vec{v} = \vec{0} ~\text{ on } \Gamma_f, \quad \forall\, t \in (0,T).
\end{equation}
We consider periodic conditions on the lateral boundaries, collectively defined as $\Gamma_{\text{lat}}$, i.e., as the union of the four lateral faces (i.e., in the $x_1$ and $x_2$ directions). This is akin to identifying the lateral faces (yielding a truly toroidal domain), or, equivalently, identifying diametrically opposed quantities  for all {\em  traces which are appropriately defined}---see \eqref{295}.

Across the interface, we will enforce conservation of mass and momentum. Denoting the {\em total} poroelastic and fluid stress tensors as
\begin{align*}
    \sigma_b = \sigma_b(\vec{u},p_b) &= \sigma^E(\vec{u}) - \alpha p_b\vec{I}, \quad \quad \sigma_f = \sigma_f(\vec{v},p_f) = 2\nu\vec{D}(\vec{v}) - p_f\vec{I},
\end{align*}
respectively, we take the following coupling conditions on $\Gamma_I\times (0,T)$:
\begin{align}
    -k\nabla p_b\cdot\vec{e}_3 &= \vec{v} \cdot \vec{e}_3 - \vec{u}_t \cdot \vec{e}_3, \label{IC1}\\
    \beta(\vec{v} - \vec{u}_t)\cdot \btau &= -\btau \cdot \sigma_f \vec{e}_3,\label{IC2}\\
    \sigma_f\vec{e}_3 &= \sigma_b\vec{e}_3,\label{IC3}\\
    p_b &= -\vec{e}_3 \cdot \sigma_f\vec{e}_3 %\textcolor{blue}{- \gamma k\nabla p_b \cdot\vec{e}_3}
    .\label{IC4}
\end{align}
We use the notation of  $\btau$ generically for  tangential vectors on $\Gamma_I$, i.e.,  $\btau = \vec{e}_i$ for $i = 1,2$. The {\it Beavers-Joseph-Saffman} \cite{mikelicBJS} condition in \eqref{IC2} says that the tangential stress is proportional to the {\it slip rate}, with proportionality constant $\beta>0$, called the {\it slip length} (see also \cite {yotovBJS,mikelicBJS}, and references therein). This condition generates frictional dissipation along the boundary, scaled by $\beta$, in the energy balance. The conservation of fluid mass across the interface is given by \eqref{IC1}, the {\it kinematic coupling condition}. The balance of total stresses in \eqref{IC3} is required by the conservation of momentum. The {\it dynamic coupling condition} \eqref{IC4} maintains the balance of normal components of the stresses across $\Gamma_I$.

Our first main result below centers on semigroup generation for an operator encoding the Cauchy problem \eqref{biot2}--\eqref{IC4}, taking  null sources, i.e.,  $\vec{F}_b=\mathbf 0, ~S=0$, and ~$\vec{F}_f=\mathbf 0$ in \eqref{biot2}--\eqref{stokes1}. We will subsequently obtain strong, generalized (semigroup), and weak solutions under appropriate hypotheses for non-zero $\vec{F}_b, S$, and $\vec{F}_f$ as they appear in \eqref{biot2} and \eqref{stokes1}.

\subsection{Energy Balance} \label{ebal}
To elucidate the relevant norms, and motivate the functional setup  ascribed to the Biot-Stokes dynamics, we will consider the energy balance in this section.
Formally testing the system \eqref{biot2}--\eqref{stokes1} with $(\vec{u}_t, p_b, \vec{v})$, resp., zeroing out  sources, and assuming sufficient regularity of the solution to justify integration by parts and invoke boundary conditions (in \eqref{IC1}--\eqref{IC3} at the interface $x_3=0$ and periodic boundary conditions on $\Gamma_{\text{lat}}$), we obtain:
\begin{align*}
0=&~    \rho_b(\vec{u}_{tt},\vec{u}_t)_{\Omega_b} ~- (\nabla\cdot\sigma^E(\vec{u}),\vec{u}_t)_{\Omega_b} + \alpha (\nabla p_b, \vec{u}_t)_{\Omega_b} + c_0(p_{b,t},p_b)_{\Omega_b} + \alpha (\nabla\cdot \vec{u}_t,p_b)_{\Omega_b} \\[.1cm]
    &- (\nabla\cdot [k\nabla p_b],p_b)_{\Omega_b}+~ \rho_f(\vec{v}_t,\vec{v})_{\Omega_f} -2\nu (\nabla\cdot \mathbf D(\bv),\vec{v})_{\Omega_f} + (\vec{v},\nabla p_f)_{\Omega_f}\\[.2cm]
    =&~ \frac{1}{2}\frac{d}{dt}\Big[\rho_b\|\vec{u}_t\|_{0,b}^2 + \|\vec{u}\|_E^2 + c_0\|p_b\|_{0,b}^2 + \rho_f\|\vec{v}\|_{0,f}^2\Big]  + k\|\nabla p_b\|_{0,b}^2 + 2\nu\|\vec{D}(\vec{v})\|_{0,f}^2 + \beta \|(\vec{v}-\vec{u}_t)\cdot\boldsymbol{\tau} \|_{\Gamma_I}^2,
\end{align*}
where we have specified a norm through the bilinear form \begin{equation}\label{korny} \|\vec{u}\|^2_E \equiv  (\sigma^E(\vec{u}),\vec{D}(\vec{u}))_{\mathbf L^2(\Omega)},\end{equation} which is equivalent to the standard $\mathbf H^1(\Omega_b)$ norm by Korn and Poincar\'e's inequalities \cite{kesavan,temam}. Then, defining the total (trajectory) energy $e(t)$ as \begin{equation} \label{energy} e(t) \equiv  \frac{1}{2}\Big[\rho_b\|\vec{u}_t\|_{0,b}^2 + \|\vec{u}\|_E^2 + c_0\|p_b\|_{0,b}^2 + \rho_f\|\vec{v}\|_{0,f}^2\Big],\end{equation} and a dissipator $d_0^t$ as \begin{equation}\label{dissipator} d_0^t \equiv \int_0^t\big[k\|\nabla p_b\|_{0,b}^2 
+ 2\nu \|\vec{D}(\vec{v})\|_{0,f}^2 
+ \beta \|(\vec{v}-\vec{u}_t)\cdot\boldsymbol{\tau} \|_{\Gamma_I}^2 \big] d\tau,\end{equation}
we obtain the formal energy balance:
\begin{equation} \label{eident} e(t) +d_0^t = e(0).\end{equation}
This identity will hold immediately for smooth solutions (e.g., in the domain of the semigroup generator), and, will be extended to semigroup (generalized) solutions for finite energy data.  It will also follow (as an inequality) for the weak solutions, constructed in Section \ref{weaksolsec} through an approximation argument. 
We  refer to the {\em energy inequality} as
\begin{align} \label{eidentt}
e(t) + d_0^t \le & ~ e(0) \\ \label{eidenttt}
e(t) + d_0^t \lesssim & ~ e(0) + \int_0^t\big[||\mathbf F_f(\tau)||^2_{\vec{L}^2(\Omega_b)}+||\mathbf F_b(\tau)||^2_{[\mathbf H^1_{\#,*}(\Omega_f)\cap \mathbf V]'}+||S(\tau)||_{[H^1_{\#,*}(\Omega_b)]'}^2 \big]d\tau
\end{align}
in the first case when $\mathbf F_f = \mathbf F_b = S \equiv 0$, and in the latter case when the sources are present---the function spaces being defined in detail in Section \ref{absModel}. 

\begin{remark} The energy identity in \eqref{eident}, having dissipation in both dynamics, naturally elicits the question of semigroup decay, as investigated in \cite{thermo,AGM,avalos,redbook}. Specifically, we note that the Biot system does contain dissipative features (via $p_b$ on $\Omega_b$ and through the tangential slip-friction on $\Gamma_I$ in \eqref{dissipator}); on the other hand, the strength of the damping is not immediately clear. In particular, the dissipative features of thermoelasticity, and the partially-damped Stokes-Lam\'e system, are non-trivial. Thus the stability and decay rates for the coupled Biot-Stokes system are of interest.\end{remark}

\section{Relevant Definitions and Main Results}\label{mainresults}
We first restate the initial-boundary-value problem of interest here (simplifying some operators):
\begin{align}\label{systemfull}
    \rho_b \vec{u}_{tt} - \nabla\cdot\sigma^E(\vec{u}) + \alpha \nabla p_b =&~ \vec{F}_b, &\text{ in } \Omega_b \times (0,T),\\ \label{systemfull0}
    [c_0p_b + \alpha \nabla \cdot \vec{u}]_t - k\Delta p_b =&~ S, &\text{ in } \Omega_b \times (0,T),\\
    \rho_f \vec{v}_t - \nu \Delta \vec{v} + \nabla p_f =&~ \vec{F}_f,\quad \quad \nabla\cdot\vec{v} = 0, &  \text{ in } \Omega_f \times (0,T).\label{systemfull1}
\end{align}
Upper and lower boundary conditions are given  by:
\begin{equation}\label{uplow}
    \vec{u} = \vec{0} ~~\text{ and } ~~ p_b = 0~ \text{ on }~\Gamma_b, \quad ~\text{ and } ~~\vec{v} = \vec{0} ~\text{ on }~ \Gamma_f, ~~~t \in (0,T).\end{equation}
The interface conditions on $\Gamma_I\times (0,T)$ are given by:
\begin{align}
    -k\nabla p_b\cdot\vec{e}_3 &= \vec{v} \cdot \vec{e}_3 - \vec{u}_t \cdot \vec{e}_3, \label{IC1*}\\
    \beta(\vec{v} - \vec{u}_t)\cdot \btau &= -\btau \cdot \sigma_f \vec{e}_3,\label{IC2*}\\
    \sigma_f\vec{e}_3 &= \sigma_b\vec{e}_3,\label{IC3*}\\
    p_b &= -\vec{e}_3 \cdot \sigma_f\vec{e}_3. %\textcolor{blue}{- \gamma k\nabla p_b \cdot\vec{e}_3}
    \label{IC4*}
\end{align}
We recall that periodic conditions  are taken on the lateral faces $\Gamma_{\text{lat}}\times (0,T)$, those these will be specified in  detail below (in \eqref{295} and the description of the semigroup generator in Section \ref{operator}). \\ Finally, initial conditions are prescribed for the states
\begin{equation}\label{endfull}\vec{y}(0) = [\vec{u}(0),\vec{u}_t(0),p_b(0),\vec{v}(0)]^T,\end{equation}
in line with the energy $e(t)$, as defined in \eqref{energy}.
\subsection{Operators and Spaces for the Inertial Biot-Stokes System}\label{absModel}
In this section we consider $c_0>0$.
For either domain $\Omega_i$ ($i=b,f$), let us introduce the notation $H^s_\#(\Omega_i)$ for the space of all functions from $H^s(\Omega_i)$ that are periodic in directions $x_1$ and $x_2$ (with spatial period 1): {\small
\begin{equation}\label{295}    {H}^s_\#(\Omega_i) := \{f \in {H}^s(\Omega_i)~:~ \gamma_j[f]|_{x_1 = 0} = \gamma_j[f]|_{x_1 = 1},~\gamma_j[f]|_{x_2 = 0} = \gamma_j[f]|_{x_2 = 1} \text{ for } j=0,1,...,s-1\},\end{equation}}

\noindent where we have denoted by $\gamma_j$ the $j$th standard trace map \cite{kesavan} on $\Gamma_{\text{lat}}$. This is to say: $\gamma_0(.)$ is the Dirichlet trace operator and normal derivative $\gamma_1(.)=\gamma_0[\nabla(.)\cdot \mathbf n]$, each restricted to a given face of $\Gamma_{\text{lat}}$. We take the analogous definition for the vector-valued $\mathbf H^s_\#(\Omega_i)$. Note that, at this stage, we only specify periodicity in $H^s$ in senses where the appropriate traces are defined, though we will have dual/weaker notions of periodic traces below. 

Now let
\begin{align*}
    \vec{U} &\equiv \{\vec{u} \in \vec{H}_\#^1(\Omega_b)~:~~ \vec{u}\big|_{\Gamma_b} = \vec{0}\},\\
    \vec{V} &\equiv \{\vec{v} \in \vec{L}^2(\Omega_f) ~:~~ \text{div}~ \vec{v} \equiv 0 \text{ in }\Omega_f; \,  \vec{v}\cdot\vec{n} \equiv 0\text{ on } \Gamma_f\},\end{align*}
    and define the state space
    \begin{align}
    X &\equiv  \vec{U} \times \vec{L}^2(\Omega_b) \times L^2(\Omega_b) \times \vec{V}.
\end{align}
We will also use the notation below, $\vec{H}^1_{\#,\ast}(\Omega_f)$ and ${H}^1_{\#,\ast}(\Omega_b)$, where the subscript $*$ denotes a zero Dirichlet trace on $\Gamma_f$ and $\Gamma_b$, resp., that is, on the $\{x_3=-1\}$ or $\{x_3=1\}$ faces.
We will subsequenty topologize the space $H^1_{\#,*}(\Omega_b)$ with the standard gradient norm, via the Poincar\'e inequality \cite{kesavan,bgsw}; thus $$||.||_{H^1_{\#,*}(\Omega_b)} \equiv ||\nabla ~.||_{L^2(\Omega_b)}.$$
The topology of $\mathbf U$ is induced by the norm $||\cdot||_E$ introduced before in \eqref{korny}, namely through the bilinear form \begin{equation}\label{bilform} a_E(\cdot,\cdot)=(\sigma^E(\cdot),\mathbf D(\cdot))_{\mathbf L^2(\Omega_b)}.\end{equation} As noted before, this norm is equivalent to the full $\mathbf H^1(\Omega_b)$ norm in $\mathbf U$. 

We  denote $$\cE_0 = -\rho_b^{-1}\nabla\cdot\sigma^E~\text{ and }~ A_0 = -c_0^{-1}k\Delta$$ in reference to the differential action of the principle operators in the Biot dynamics. We note that, unlike previous related work \cite{avalos*,AGM,avalos}, we do not define these as proper operators on individual (unbounded operator) domains. Indeed, we will consider their action as part of the overall dynamics operator, defined in \eqref{diffaction} and give the full operator's domain in Definition \ref{diffdomain}.

To accommodate the physical quantities $c_0,\rho_b,\rho_f>0$, we introduce some equivalent topologies via the inner-product on the finite-energy space $X$:
\begin{align}\label{innerX}
(\mathbf y_1, \mathbf y_2)_X = a_E(\bu_1,\bu_2)+\rho_b(\bw_1,\bw_2)_{\mathbf L^2(\Omega_b)}+c_0(p_1,p_2)_{L^2(\Omega_b)}+\rho_f(\mathbf v_1,\mathbf v_2)_{\mathbf L^2(\Omega_f)}.
\end{align}
 We consider the state variable $\vec{w} = \vec{u}_t$  as the elastic velocity, which is standard for hyperbolic systems. 

We then consider a Cauchy problem which captures the dynamics of the full Biot-Stokes system. Namely: find $\vec{y} = [\vec{u},\vec{w},p_b,\vec{v}]^T \in \mathcal{D}(\cA)$ (resp. $X$) such that
\begin{equation}\label{cauchy}
   \dot{ \vec{y}}(t) = \cA \vec{y}(t)+\mathcal F(t); \quad \vec{y}(0) = [u_0,u_1,p_0,v_0]^T \in \mathcal{D}(\cA) \text{ (resp. } X),
\end{equation}
where $\mathcal F(t) = [\mathbf 0, \mathbf F_b(t), S(t), \mathbf F_f(t)]^T$ (of appropriate regularity)
and the action of the  operator $\cA:\mathcal{D}(\cA)\subset X \to X$ is provided by
\begin{equation}\label{A}
    \cA \equiv \begin{pmatrix}
        0 & \vec{I} & 0 & 0\\
        -\cE_0 & 0 & -\alpha\rho_b^{-1} \nabla & 0\\
        0 & -\alpha c_0^{-1} \nabla\cdot & -A_0 & 0\\
        0 & 0 & \rho_f^{-1}G_1 & \rho_f^{-1}[\nu\Delta + G_2 + G_3]
    \end{pmatrix}.
\end{equation}
The operators $G_1, G_2,$ and $G_3$ above are certain Green's maps which enable the elimination of the pressure (and carry boundary information) \cite{avalos}. They will be defined properly in Section \ref{elimp}. Additionally, the precise definition of the domain of $\cA$ will the topic of Section \ref{gendef}. 

As is typical in  defining the action of a prospective generator, such as $\cA$, we must define $\mathcal{D}(\cA)$ strictly enough to permit dissipativity calculations (i.e., appropriate integration by parts formulae must hold, at least in dual spaces), but not so strictly so as to preclude recovering a weak solution in the domain  for the resolvent system \cite{pazy}. We note that {\em this is a challenge in this context}, and {\em defining the operator (and hence its domain) in a useful way constitutes a central contribution here}; in doing so, we must be particularly mindful of the regularity properties of the elliptic maps $G_i$, $i=1,2,3$, which is why we relegate that discussion to its own section.

\subsection{Definition of Weak Solutions}
The motivation for our functional setup and  notion of weak solutions is due to  \cite{multilayered}. In that reference, a Stokes-Biot system is considered, coupled via a 2D poroelastic plate evolving at the interface. We here consider the analogous definition of weak solutions to \cite{multilayered}, which are equivalent after ``zeroing out" the plate variables in that approach. Of course, there are other viable definitions of weak solutions for coupled hyperbolic-parabolic problems based, for instance, on: time-independent test functions, rewriting the system in its first order formulation, or enforcing Dirichlet-type coupling conditions in the test space. 
{\bf We will now define weak solutions for all $c_0\ge 0$.}

To that end, define the spaces
\begin{align*}
    \mathcal{V}_b &= \{\vec{u} \in L^\infty(0,T;\vec{U}) ~:~ \vec{u} \in W^{1,\infty}(0,T;\vec{L}^2(\Omega_b))\}, \\
    \mathcal{Q}_b &= \left\{p \in L^2(0,T; H^1_{\#,*}(\Omega_b))~:~c_0^{1/2}p \in L^\infty(0,T;L^2(\Omega_b))\right\}, \\
    \mathcal{V}_f &= L^\infty(0,T;\vec{V}) \cap L^2(0,T; \vec{H}^1_{\#,\ast}(\Omega_f) \cap \vec{V}).
\end{align*}
Then, the {\bf weak solution space} is
\newcommand\Vsol{\mathcal{V}_{\text{sol}}}
\newcommand\Vtest{\mathcal{V}_{\text{test}}}    
\[
    \mathcal{V}_\text{sol} = \mathcal{V}_b\times \mathcal{Q}_b\times\mathcal{V}_f,~\text{ for } [\bu,p_b,\bv]^T
\]
and the {\bf test space} is
\[
    \mathcal{V}_\text{test} = C^1_0([0,T); \vec{U} \times  H^1_{\#,*}(\Omega_b) \times (\vec{H}^1_{\#,\ast}(\Omega_f)\cap \vec{V})),~\text{ for } ~[\bxi,q_b,\bzeta]^T,
\]
where we denote $C^1_0$ for continuously differentiable functions with compact support on $(-\epsilon,T)$ for all $\epsilon>0$. 
Note that the solution space is built to accommodate any $c_0\geq 0$ in the single definition.

To give the weak formulation, we introduce the following time-space notations for convenience:
 $((\cdot, \cdot))_{\mathscr O}$ denotes an inner-product on $L^2(0,T;L^2(\mathscr O))$, for a spatial domain $\mathscr O$. Similarly, for a Sobolev type space defined on $\mathscr O$, say $W(\mathscr O)$, we will denote $(\langle \cdot,\cdot \rangle)_{\mathscr O}$  as pairing between $L^2(0,T;W(\mathscr O))$ and its dual $L^2(0,T;[W(\mathscr O)]')$.
We also suppress trace operators below, noting spatial restrictions from subscripts. We will consider initial conditions of the form:
     \begin{equation}\label{weakICs}
        \vec{u}(0) = \vec{u}_0, ~~ \vec{u}_t(0) = \vec{u}_1, ~~ [c_0p_b + \alpha\nabla\cdot\vec{u}](0) = d_0, ~~ \vec{v}(0) = \vec{v}_0.
    \end{equation}
   Finally, we consider source data of regularity (at least): 
    $$\mathbf F_f \in L^2\big(0,T;(\vec{H}^1_{\#,\ast}(\Omega_f)\cap \vec{V})'\big),~~\mathbf F_b \in L^2(0,T;\vec{L}^2(\Omega_b)),~~\text{and}~~S \in L^2(0,T; (H^{1}_{\#,*}(\Omega_b))').$$

\begin{definition}\label{weaksols}
    We say that $[\vec{u},p_b,\vec{v}]^T \in \mathcal{V}_{\text{sol}}$ is a weak solution to \eqref{systemfull}--\eqref{endfull} if for every test function $[\bxi,q_b,\bzeta]^T \in \mathcal{V}_\text{test}$ the following identity holds:
    \begin{align}\label{weakform}
        -&~ \rho_b((\vec{u}_t,\bxi_t))_{\Omega_b} + ((\sigma_b(\vec{u},p_b), \nabla \bxi))_{\Omega_b} - ((c_0 p_b + \alpha\nabla\cdot\vec{u}, \partial_t q_b))_{\Omega_b} + ((k\nabla p_b,\nabla q_b))_{\Omega_b} \nn\\
        &- \rho_f((\vec{v},\bzeta_t))_{\Omega_f} + 2\nu((\vec{D}(\vec{v}),\vec{D}(\bzeta)))_{\Omega_f} + (({p_b,(\bzeta-\bxi)\cdot\vec{e}_3}))_{\Gamma_I} - (({\vec{v}\cdot\vec{e}_3,q_b}))_{\Gamma_I} \nn\\
        &- (({\vec{u}\cdot\vec{e}_3,\partial_tq_b}))_{\Gamma_I} + \beta (({\vec{v}\cdot\btau,(\bzeta - \bxi)\cdot\btau}))_{\Gamma_I} + \beta(({\vec{u}\cdot\btau, (\bzeta_t - \bxi_t)\cdot\btau}))_{\Gamma_I} \nn\\
        =&~ \rho_b(\mathbf u_1,\bxi)_{\Omega_b}\big|_{t=0} + (d_0,q_b)_{\Omega_b}\big|_{t=0} + \rho_f(\vec{v}_0,\bzeta)_{\Omega_f}\big|_{t=0} + ({\vec{u}_0\cdot\vec{e}_3,q_b})_{\Gamma_I}\big|_{t=0} \nn\\
        &~- \beta({\vec{u}_0\cdot\btau, (\bzeta - \bxi)\cdot\btau})_{\Gamma_I}\big|_{t=0} + ((\vec{F}_b,\bxi))_{\Omega_b} + (\langle S,q_b\rangle)_{\Omega_b} + (\langle \vec{F}_f,\bzeta\rangle)_{\Omega_f},
    \end{align}
 for $\btau=\mathbf e_i,~i=1,2$.
\end{definition}
\begin{remark}
In the above definition, as below, the regularity of velocity traces of the form $\gamma_0[\bu_t]$ will be at issue. We note that such traces can be interpreted distributionally for $\bu \in L^2(0,T;\mathbf H^1(\Omega_b))$ as $\partial_t[\gamma_0[\mathbf u]]$, to which additional regularity can be demonstrated or ascribed.
\end{remark}
In the above formulation, several time differentiations are pushed to the time-dependent test function, as is a common treatment of hyperbolic problems as well as in \cite{multilayered}. It is worth noting, that the weak formulation circumvents {\em a priori trace regularity issues} at the interface associated to $\gamma_0[\bu_t]$ through this temporal integration by parts  (in the normal and tangential components of $\bu_t$ on $\Gamma_I$). Indeed, we note that weak solutions, as defined above, have only that $\bu_t \in L^{\infty}(0,T;\mathbf L^2(\Omega_b))$ for interior regularity.  {\em Natural} Neumann-type conditions in \eqref{IC1*}--\eqref{IC4*}  are unproblematic, as such traces do not appear in the weak formulation and will be a posteriori justified as elements of $H_{\#}^{-1/2}(\Gamma_I)$-type spaces (as characterized in Section \ref{gendef}). For {\em for a constructed weak solution}, owing to the energy identity, we will infer some regularity of temporal derivatives in the statement of \eqref{weakform} {\em a posteriori}. For instance, terms such as $\bu_{tt}$ and $[c_0p_b+\nabla \cdot \bu]_t$ in the interior, as well as $\gamma_0[\bu]_t \cdot \mathbf e_3$ will exists as elements of certain dual spaces. However, characterizing a space of distributions for these time derivatives is not straightforward, owing the coupled nature of the problem and our chosen weak formulation in \eqref{weakform} above, as well as very sensitive to the regularity of the data $\mathbf F_b, ~\mathbf F_f,$ and $S$. The term $\gamma_0[(\bv-\bu_t)\cdot \btau]$ will be a properly defined element of $L^2(0,T;L^2(\Gamma_I))$ for any weak solution constructed as a subsequential limit from smooth solutions satisfying the energy relation \eqref{eidenttt}. 

We point out that one of the main contributions of \cite{multilayered} is the development of the above notion of weak solutions. That reference also establishes that point-wise solutions are weak, and that weak solutions with additional regularity recover the equations and boundary conditions point-wise. The analysis of the dynamics here proceeds {\em mutatis mutandis},  zeroing out the plate contributions across the interface. As such, we do not demonstrate the consistency of the weak solution definition for the sake of conciseness but  refer directly to \cite[Section 3]{multilayered}. 

\begin{remark}[Regularity and Weak Solutions I] \label{regweaksol1} The gap between $\mathcal V_{\text{test}}$ and $\mathcal V_{\text{sol}}$ is precisely at issue with both regularity and uniqueness for weak solutions. The above weak formulation bypasses certain regularity issues associated to the coupling conditions; this is unproblematic for the construction of weak solutions. However, the ``price to pay" concerns a posteriori regularity of weak solutions. This is a peculiar feature for {\em linear dynamics}, but nonetheless unavoidable with this particular hyperbolic-parabolic coupling which prevents the decoupling of constituent interior dynamics; the boundary conditions involving $\bu_t$ are not ``separated" from other interior (low regularity) terms. Moreover, this elicits a particular challenge for the development of any regularity theory of weak solutions, not altogether unexpected from hyperbolic-parabolic coupled problems with ill-defined velocity traces at the interface. Since the problem is linear, even the situation with zero forcing and zero Cauchy data is not immediately resolved, i.e., the aforesaid regularity issues translate into issues with uniqueness of weak solutions. A generic weak solution need not satisfy the energy identity (since elements of $\mathcal V_{\text{sol}}$ are not readily used as test functions). A future work will specifically explore these issues. As a final, comment, we mention that the classical reference \cite{ball} includes an abstract construction of weak solutions from any given semigroup generator, {\em as well as uniqueness of said weak solutions}. We forgo using such an abstract approach here, since it entails a particular weak form at the abstract level which involves {\em smooth} test functions in domain of the adjoint of the semigroup generator. Since, we here work directly with a weak form inherited from previous work \cite{multilayered}, using physically-motivated test functions, we relegate this analysis to a future work.
\end{remark}

\subsection{Statement of Main Results}
The approach to solutions in this paper is based on  $C_0$-semigroups \cite{pazy}. As such, we remark that the Cauchy problem \eqref{cauchy}  can be solved in two standard ways: {\em strongly} or {\em in a generalized} sense. Strong solutions are  differentiable functions with the time derivative taking values in $X$, and are also continuous functions into $\mathcal D(\mathcal A)$; such strong solutions emanate from $\mathbf y_0 \in \mathcal D(\mathcal A)$ with appropriate regularity assumptions on $\mathcal F$ and point-wise satisfy \eqref{cauchy}. We may also consider data $\mathbf y_0 \in X$, yielding a generalized solution to the Cauchy problem \eqref{cauchy}; this is a $C([0,T];X)$ limit of a sequence $\mathbf y_n(t)$ of strong solutions to the Cauchy problem \eqref{A}, each with initial data $\mathbf y_0^n \in \mathcal D(\cA)$, such that $\mathbf y_0^n \to \mathbf y_0$ in $X$ \cite{pazy}. Generalized solutions solve the time-integrated version of \eqref{cauchy} 
%\begin{equation}\label{cauchy*} \mathbf y(t) = \mathbf y_0+\mathcal A\int_0^t\mathbf y(s)ds+\int_0^t\mathcal F(s)ds. \end{equation} 
%They are 
and are defined by the variation of parameters formula, referencing the semigroup $e^{\mathcal A t}$ for the homogeneous problem
\begin{equation}\label{varpar} \mathbf y(t)=e^{\mathcal At}\mathbf y_0 +\int_0^te^{\mathcal A(t-s)}\mathcal F(s)ds.\end{equation}
 For linear problems, generalized solutions are also shown to be weak \cite{ball,pazy}. 

Our central supporting theorem in this treatment is that of semigroup generation for the operator $\cA$  on the space $X$. 
\begin{theorem}
\label{th:main1}
    The operator $\mathcal A$ on $X$ (defined by \eqref{A}, with domain $\mathcal D(\mathcal A)$ given in Definition \ref{diffdomain}) is the generator of a strongly continuous semigroup $\{e^{\cA t}: t\geq 0\}$ of contractions on $X$. Thus, for $\mathbf y_0 \in \mathcal D(\mathcal A)$, we have $e^{\mathcal A\cdot}\mathbf y_0 \in C([0,T];\mathcal D(\mathcal A))\cap C^1((0,T);X)$ satisfying \eqref{cauchy} in the {\em strong sense} with $\mathcal F=[\mathbf 0,\mathbf 0, 0,\mathbf 0]^T$; similarly, for $\mathbf y_0 \in X$, we have $e^{\mathcal A\cdot}\mathbf y_0 \in C([0,T];X)$ satisfying \eqref{cauchy} in the {\em generalized sense} with $\mathcal F=[\mathbf 0, \mathbf 0, 0,\mathbf 0]^T$.
\end{theorem}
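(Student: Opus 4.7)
The plan is to apply the classical Lumer-Phillips theorem for contraction semigroup generation on the Hilbert space $X$ equipped with the inner product \eqref{innerX}. This requires establishing two things for $\mathcal{A}$ on its (to-be-specified) domain: dissipativity of $\mathcal{A}$ with respect to $(\cdot,\cdot)_X$, and the range condition, namely that $I - \mathcal{A}$ maps $\mathcal{D}(\mathcal{A})$ onto $X$. Since $X$ is Hilbert and the generator domain will be densely defined, these two conditions will suffice.

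For dissipativity, I would compute $(\mathcal{A}\mathbf{y},\mathbf{y})_X$ directly on a smooth element $\mathbf{y}=[\mathbf{u},\mathbf{w},p_b,\mathbf{v}]^T \in \mathcal{D}(\mathcal{A})$. The first two components pair $\mathbf{w}$ against $\mathbf{u}$ in the $a_E(\cdot,\cdot)$ form and $-\nabla\cdot\sigma^E(\mathbf{u}) + \alpha\nabla p_b$ against $\mathbf{w}$ in $\mathbf{L}^2(\Omega_b)$. After integration by parts using the definition of $\mathcal{D}(\mathcal{A})$ (which encodes the interface conditions \eqref{IC1*}--\eqref{IC4*}, periodicity on $\Gamma_{\text{lat}}$, and the boundary conditions on $\Gamma_b,\Gamma_f$), the symmetric elastic term cancels and the pressure-displacement cross terms combine with the $\alpha\nabla\cdot\mathbf{w}$ contribution in the pressure equation. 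What survives is precisely the formal energy computation of Section 1.4, giving
\begin{equation*}
(\mathcal{A}\mathbf{y},\mathbf{y})_X = -k\|\nabla p_b\|_{0,b}^2 - 2\nu\|\mathbf{D}(\mathbf{v})\|_{0,f}^2 - \beta\|(\mathbf{v}-\mathbf{w})\cdot\boldsymbol{\tau}\|_{\Gamma_I}^2 \le 0,
\end{equation*}
using that $\text{div}\,\mathbf{v}=0$ annihilates the fluid pressure pairing and that the interface conditions convert the boundary contributions from the Biot and Stokes integration by parts into the BJS dissipation on $\Gamma_I$. The role of the Green's maps $G_1,G_2,G_3$ is to ensure that the ``hidden'' fluid pressure $p_f$ arising from the incompressibility constraint is  realized in a way consistent with these identities; this must be confirmed using their detailed properties in Section \ref{elimp}.

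The principal obstacle is the range condition, which amounts to solving the stationary resolvent system $(I-\mathcal{A})\mathbf{y}=\mathbf{f}$ for arbitrary $\mathbf{f} \in X$, with $\mathbf{y} \in \mathcal{D}(\mathcal{A})$. My plan here follows the paper's stated strategy: substitute the first component $\mathbf{w}=\mathbf{u}-f_1$ into the second equation to eliminate the velocity, and treat the fluid pressure $p_f$ as a Lagrange multiplier for the divergence constraint on $\mathbf{v}$. This yields a mixed variational formulation on the test space $\mathbf{U}\times H^1_{\#,*}(\Omega_b)\times \mathbf{H}^1_{\#,*}(\Omega_f)$ for the triple $(\mathbf{u},p_b,\mathbf{v})$, paired with a divergence-type constraint against test pressures in an appropriate $L^2$-quotient space on $\Omega_f$. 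The coupled interface integrals (kinematic condition, BJS friction, the $p_b$-stress matching) all emerge naturally in the bilinear form when one tests with $(\boldsymbol{\xi},q_b,\boldsymbol{\zeta})$ and integrates by parts.

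To invoke Babuška-Brezzi, I would verify: (i) continuity of the principal bilinear form (clear from the function space topologies and trace inequalities), (ii) coercivity on the kernel of the constraint operator (i.e., on divergence-free $\mathbf{v}$), which follows from the $+\mathbf{u}$ and $+p_b$ contributions coming from the $I-\mathcal{A}$ structure together with Korn, Poincaré, and the positivity of the BJS term, and (iii) an inf-sup condition for the fluid pressure, which is the standard Stokes inf-sup on $\Omega_f$ adapted to the mixed boundary conditions (Dirichlet on $\Gamma_f$, free on $\Gamma_I$, periodic on the lateral sides). This produces a weak solution $(\mathbf{u},p_b,\mathbf{v},p_f)$ of the resolvent system. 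The last step is the regularity/domain step: I would show that the weak solution's interior differential operators act with enough regularity (e.g., $\nabla\cdot\sigma^E(\mathbf{u})-\alpha\nabla p_b \in \mathbf{L}^2(\Omega_b)$, $\Delta p_b \in L^2(\Omega_b)$, $\nu\Delta\mathbf{v}-\nabla p_f \in \mathbf{L}^2(\Omega_f)$) by using the resolvent equation itself as a definition of these quantities, and that the trace-level interface conditions hold in the appropriate dual spaces characterized in Section \ref{gendef}. Density of $\mathcal{D}(\mathcal{A})$ in $X$ is inherited from standard density results on the component spaces. With dissipativity, maximality, and density in hand, Lumer-Phillips yields the strongly continuous contraction semigroup, and the claimed regularity of strong/generalized solutions follows from general semigroup theory \cite{pazy}.
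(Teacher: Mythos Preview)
Your proposal is correct and follows essentially the same approach as the paper: dissipativity by direct computation on $\mathcal{D}(\mathcal{A})$, maximality through a Babu\v{s}ka--Brezzi mixed-variational argument on $\Sigma = \mathbf{U}\times H^1_{\#,*}(\Omega_b)\times\mathbf{H}^1_{\#,*}(\Omega_f)$ with the fluid pressure as Lagrange multiplier, then recovery of domain membership for the weak solution, and finally Lumer--Phillips. One small correction: the multiplier space is the full $L^2(\Omega_f)$ rather than a quotient, since the interface $\Gamma_I$ is not Dirichlet for $\mathbf{v}$ and removes the mean-zero obstruction in the inf-sup (the paper verifies this via an explicit divergence problem with nonzero normal trace on $\Gamma_I$); also, density of $\mathcal{D}(\mathcal{A})$ need not be argued separately, as it follows automatically once $m$-dissipativity is established on a Hilbert space.
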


As a corollary, we will obtain strong and weak solutions to \eqref{systemfull}--\eqref{IC4*}, including  in the non-homogeneous case with generic forces. By strong solutions to  \eqref{systemfull}--\eqref{IC4*} we mean weak solutions, as in Definition \ref{weaksols}, with enough regularity that the equations in \eqref{systemfull}--\eqref{IC4*} also hold point-wise. In fact, these will be constructed from semigroup solutions with smooth data. For the results presented below, we do not explicitly mention generalized solutions, as these are tied specifically to the semigroup framework (though the well-posedness of generalized solutions also follows from Theorem \ref{th:main1}).

\begin{corollary}\label{coro}
\hfill
\begin{enumerate}
 \item[(I)] Take $\mathbf F_f\equiv \mathbf 0,$ $\mathbf F_b\equiv \mathbf 0$, and $S\equiv 0$. 

a) Suppose $\mathbf y_0 \in \mathcal D(\mathcal A)$. Then strong solutions to \eqref{systemfull}--\eqref{IC4*}  exist and are unique. For such strong solutions, the energy relation \eqref{eident} holds.

b) Suppose $\mathbf y_0 \in X$. Then weak solutions as defined in Definition \ref{weaksols} exist. For such weak solutions, the energy inequality \eqref{eidentt} holds.

 \item[(II)] Suppose $\mathbf y_0 \in \mathcal D(\mathcal A)$ and $\mathcal F = [\mathbf 0,\mathbf  F_b, S, \mathbf F_f]^T \in H^1(0,T;X)$. Then strong solutions to \eqref{cauchy} exist and are unique. 
 
\item[(III)] Suppose $\mathbf y_0 \in X$ and $\mathbf F_f \in L^2\big(0,T;(\vec{H}^1_{\#,\ast}(\Omega_f)\cap \vec{V})\big)'$, $\mathbf F_b \in L^2(0,T;\vec{L}^2(\Omega_b))$, and $S \in L^2(0,T; [H^{1}_{\#,*}(\Omega_b)]')$. Then weak solutions to \eqref{cauchy} exist; the constructed solutions satisfy the energy inequality \eqref{eidenttt}.
\end{enumerate}
\end{corollary}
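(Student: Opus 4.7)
The strategy is to leverage Theorem~\ref{th:main1}---which supplies the contraction semigroup $\{e^{\mathcal A t}\}_{t\geq 0}$ on $X$---to immediately yield strong and generalized solutions to the abstract homogeneous Cauchy problem, and then to pass to weak solutions of the PDE system \eqref{systemfull}--\eqref{IC4*} by density combined with the energy estimates of Section~\ref{ebal}. The formal calculation in Section~\ref{ebal} is rigorous for strong solutions $\mathbf y(t)\in \mathcal D(\mathcal A)$, since all integration-by-parts steps and trace identities are supported by the regularity built into $\mathcal D(\mathcal A)$, where the interface conditions \eqref{IC1*}--\eqref{IC4*} are encoded.

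For (I)(a), applying Theorem~\ref{th:main1} yields $\mathbf y(t)=e^{\mathcal A t}\mathbf y_0\in C([0,T];\mathcal D(\mathcal A))\cap C^1([0,T];X)$, which satisfies the equations and interface conditions point-wise; testing with $(\bu_t,p_b,\bv)$ then produces \eqref{eident}, while uniqueness is automatic from semigroup theory. For (I)(b), density of $\mathcal D(\mathcal A)$ in $X$ furnishes an approximating sequence $\mathbf y_0^n\to \mathbf y_0$ with strong solutions $\mathbf y_n$, and the energy identity provides uniform bounds: $\bu_n\in L^\infty(0,T;\mathbf U)$, $\partial_t\bu_n\in L^\infty(0,T;\mathbf L^2(\Omega_b))$, $p_{b,n}\in L^\infty(0,T;L^2(\Omega_b))\cap L^2(0,T;H^1_{\#,*}(\Omega_b))$, $\bv_n\in L^\infty(0,T;\mathbf V)\cap L^2(0,T;\mathbf H^1_{\#,*}(\Omega_f)\cap\mathbf V)$, together with a bound on $(\bv_n-\partial_t\bu_n)\cdot\btau$ in $L^2(\Gamma_I\times (0,T))$. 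Extracting weak-$*$ limits, invoking linearity of \eqref{weakform}, and appealing to lower semicontinuity yields a weak solution satisfying \eqref{eidentt}.

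Part (II) is an application of standard non-homogeneous semigroup theory \cite{pazy}: since $\mathcal F\in H^1(0,T;X)\hookrightarrow W^{1,1}(0,T;X)$, the variation-of-parameters formula \eqref{varpar} produces a strong solution $\mathbf y\in C([0,T];\mathcal D(\mathcal A))\cap C^1([0,T];X)$, with uniqueness automatic. For part (III), I would again approximate: $\mathbf y_0^n\in \mathcal D(\mathcal A)$ converging to $\mathbf y_0$ in $X$, together with smooth $(\mathbf F_b^n, S^n, \mathbf F_f^n)$ approximating $(\mathbf F_b, S, \mathbf F_f)$ in the respective $L^2(0,T;\cdot)$-norms of the dual spaces appearing in \eqref{eidenttt}, chosen so that each $\mathcal F_n\in H^1(0,T;X)$. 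Part (II) then delivers strong solutions $\mathbf y_n$, and \eqref{eidenttt} provides uniform bounds in the same finite-energy spaces as in (I)(b). The main obstacle is passing to the limit in \eqref{weakform} under only weak-$*$ convergence of $\mathbf y_n$: a naive treatment of the interface integrals involving traces of $\bu_n$ and the tangential slip would seem to demand extra compactness. The weak formulation is, however, precisely designed to move the time derivative onto the test function so that $\gamma_0[\bu_n]$---rather than $\gamma_0[\partial_t\bu_n]$---is the active boundary quantity, and standard trace continuity together with weak-$*$ convergence of $\bu_n$ in $L^\infty(0,T;\mathbf U)$ then suffices; the tangential slip passes via its explicit $L^2(\Gamma_I\times(0,T))$ bound inherited from $d_0^t$. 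The energy inequality \eqref{eidenttt} then survives in the limit by lower semicontinuity of the norms on the left-hand side and strong convergence of the data on the right.
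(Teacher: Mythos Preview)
Your proposal is correct and follows essentially the same approach as the paper: strong solutions via Theorem~\ref{th:main1} and standard non-homogeneous semigroup theory for (I)(a) and (II), then a density-plus-approximation argument for (I)(b) and (III), using the energy identity on strong approximants to extract uniform bounds, passing to the weak-$*$ limit term-by-term in the weak form \eqref{weakform}, and concluding with lower semicontinuity for \eqref{eidentt}--\eqref{eidenttt}. You also correctly anticipated the key technical point---that the weak formulation shifts time derivatives onto the test functions so only $\gamma_0[\bu_n]$ (bounded via $L^\infty(0,T;\mathbf U)$ and the trace theorem) rather than $\gamma_0[\partial_t\bu_n]$ appears in the interface integrals.
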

\begin{remark} We remark that strong solutions with $\mathcal F \neq \mathbf 0$ also satisfy an analog of the energy balance in \eqref{eident} (which is for the homogeneous case). In particular, we may write 
{\small \begin{equation*}
e(t)+d_0^t=e(0)+\int_0^t\big[\langle\mathbf F_f,\bv\rangle_{(\vec{H}^1_{\#,\ast}(\Omega_f)\cap \vec{V})'\times (\vec{H}^1_{\#,\ast}(\Omega_f)\cap \vec{V})}+(\mathbf F_b,\bu_t)_{\mathbf L^2(\Omega_b)}+\langle S, p_b\rangle_{ [H^{1}_{\#,*}(\Omega_b)]'\times   H^{1}_{\#,*}(\Omega_b)}\big] d\tau,
\end{equation*}} though this is not the unique way to express the identity, as dependent on the regularity of $\mathcal F$.  \end{remark}
\begin{remark} Although we provide straightforward criteria for the data to yield strong solutions from \cite{pazy}, we of course note that more minimal assumptions can be made. See \cite[Chapter 4]{pazy}.
\end{remark}

This brings us to our second main theorem: existence of weak solutions for \eqref{systemfull}--\eqref{IC4*}, independent of the storage parameter $c_0\ge 0$.
\begin{theorem}\label{th:main2} Suppose $\mathbf y_0 \in X$ and $\mathbf F_f \in L^2(0,T;\vec{H}^1_{\#,\ast}(\Omega_f)\cap \vec{V})'$, $\mathbf F_b \in L^2(0,T;\mathbf L^2(\Omega_b))$, and $S \in L^2(0,T; [H^{1}_{\#,*}(\Omega_b)]')$.
   Then for any $c_0\ge 0$, the dynamics admit weak solutions in the sense of Definition \ref{weaksols}. A constructed weak solution satisfies the energy inequality in \eqref{eidenttt}. A posteriori, for a trajectory ~$\mathbf y(t) = [\mathbf u, \mathbf u_t, p_b, \mathbf v]^T$, $t \in [0,T]$, corresponding to a weak solution, we infer the additional trace regularity of \begin{equation} \gamma_0[(\mathbf v-\mathbf u_t) \cdot \btau] \in L^2(0,T; L^2(\Gamma_I)). \end{equation} 
\end{theorem}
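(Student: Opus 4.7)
The case $c_0 > 0$ is already contained in Corollary~\ref{coro}(III); the substance of Theorem~\ref{th:main2} therefore lies in the degenerate case $c_0 = 0$, which I would obtain as a vanishing-storage limit. Fix a sequence $c_0^n \searrow 0$, and for each $n$ invoke Corollary~\ref{coro}(III) with storage parameter $c_0^n$, the prescribed sources $(\mathbf F_b, S, \mathbf F_f)$, and the approximating initial datum $\mathbf y_0^n := (\mathbf u_0, \mathbf u_1, 0, \mathbf v_0) \in X$, producing a weak solution $\mathbf y^n = [\mathbf u^n, \mathbf u_t^n, p_b^n, \mathbf v^n]^T$ that satisfies the energy inequality \eqref{eidenttt}. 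Setting $p_b^n(0) = 0$ makes the initial fluid content $d_0^n = \alpha\nabla\cdot\mathbf u_0$ for every $n$, which is the only physically meaningful initialization in the limit $c_0=0$.

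Next I would extract $c_0^n$-uniform bounds from \eqref{eidenttt} (whose right-hand side is independent of $c_0^n$): $\mathbf u^n$ in $L^\infty(0,T;\mathbf U)$; $\mathbf u_t^n$ in $L^\infty(0,T;\mathbf L^2(\Omega_b))$; $\mathbf v^n$ in $L^\infty(0,T;\mathbf V)$; $(c_0^n)^{1/2} p_b^n$ in $L^\infty(0,T;L^2(\Omega_b))$; $\nabla p_b^n$ in $L^2(0,T;\mathbf L^2(\Omega_b))$, hence $p_b^n$ in $L^2(0,T;H^1_{\#,*}(\Omega_b))$ by Poincar\'e; $\mathbf D(\mathbf v^n)$ in $L^2(0,T;\mathbf L^2(\Omega_f))$, hence $\mathbf v^n$ in $L^2(0,T;\mathbf H^1_{\#,*}(\Omega_f)\cap\mathbf V)$ by Korn; and $(\mathbf v^n - \mathbf u_t^n)\cdot\btau$ in $L^2(0,T;L^2(\Gamma_I))$. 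Extracting a subsequence (not relabeled), I obtain weak/weak-$*$ limits $(\mathbf u,\mathbf u_t,p_b,\mathbf v)$ in each of these senses. The decisive observation is that $c_0^n p_b^n = (c_0^n)^{1/2}\cdot(c_0^n)^{1/2}p_b^n \to 0$ strongly in $L^\infty(0,T;L^2(\Omega_b))$, since one scalar factor vanishes while the other factor remains bounded.

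With these convergences, I would pass to the limit in the weak formulation \eqref{weakform} for $\mathbf y^n$ tested against an arbitrary fixed $[\bxi,q_b,\bzeta]^T \in \mathcal V_{\text{test}}$. Every term is linear in the unknown: the storage contribution $((c_0 p_b, \partial_t q_b))_{\Omega_b}$ drops out via the strong convergence just noted; $((\alpha\nabla\cdot\mathbf u^n, \partial_t q_b))_{\Omega_b}$ passes by weak-$*$ convergence of $\mathbf u^n$ in $L^\infty(0,T;\mathbf U)$; and the interface traces of $p_b^n$, $\mathbf v^n$, and $\mathbf u^n$ pass by continuity of the trace operator applied to the respective weak $H^1$-convergences in the bulk. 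Because $d_0^n = d_0 = \alpha\nabla\cdot\mathbf u_0$ for every $n$, the initial-data expressions on the right-hand side of \eqref{weakform} are unchanged under the limit. The limit identity is therefore exactly \eqref{weakform} at $c_0 = 0$, and the limit lies in $\mathcal V_{\text{sol}}$ by the bounds above (the constraint $c_0^{1/2} p_b \in L^\infty(0,T;L^2(\Omega_b))$ being vacuous at $c_0 = 0$). Weak lower semicontinuity of norms applied to the dissipator then yields both the energy inequality \eqref{eidenttt} for the limit and the a posteriori regularity $\gamma_0[(\mathbf v-\mathbf u_t)\cdot\btau] \in L^2(0,T;L^2(\Gamma_I))$.

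The hard part, and the reason the programme must live inside the formulation of Definition~\ref{weaksols}, is that no individual velocity trace on $\Gamma_I$ is controlled by the energy: $\gamma_0[\mathbf u_t^n]$ has no independent regularity, and the dissipator bound on $(\mathbf v^n - \mathbf u_t^n)\cdot\btau$ cannot be split into separate statements on its two summands. The weak formulation circumvents this by recording $\mathbf u_t$ on $\Gamma_I$ only through the time-integrated-by-parts term $\beta((\mathbf u\cdot\btau, (\bzeta_t - \bxi_t)\cdot\btau))_{\Gamma_I}$, so that the limit passage requires only the traces of $\mathbf u^n$ and $\mathbf v^n$, each uniformly controlled by the bulk $H^1$-bounds. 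This is the mechanism by which \eqref{weakform} sidesteps the mismatched trace regularity at the interface, and what makes the $c_0 \searrow 0$ limit possible without any strong compactness on $\mathbf u_t^n$ at $\Gamma_I$.
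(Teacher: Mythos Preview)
Your proposal is correct and follows essentially the same route as the paper: the case $c_0>0$ is delegated to Corollary~\ref{coro}(III), and the degenerate case $c_0=0$ is obtained as a vanishing-storage limit $c_0^n\searrow 0$, using the $c_0^n$-uniform energy estimate to extract weak/weak-$*$ limits and passing to the limit term-by-term in the linear weak form~\eqref{weakform}. Your handling of the initial data (setting $p_b^n(0)=0$ so that $d_0^n=\alpha\nabla\cdot\mathbf u_0$ is fixed) and your direct argument that $c_0^n p_b^n\to 0$ strongly are slightly cleaner than the paper's version, but the overall strategy and the key mechanism---that the time-integrated-by-parts weak form avoids any need to control $\gamma_0[\mathbf u_t^n]$ separately on $\Gamma_I$---are the same.
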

\begin{remark} Since, for weak solutions, $\mathbf v \in L^2(0,T; \vec{H}^1_{\#,\ast}(\Omega_f) \cap \vec{V})$, we know, by the trace theorem \cite{kesavan}, $\gamma_0[\mathbf v] \in L^2(0,T; \mathbf H^{1/2}(\Gamma_I))$. 
From this we deduce the global regularity of $\gamma_0[\mathbf u_t\cdot \btau] \in L^2(0,T;  L^2(\Gamma_I)),~~\btau = \mathbf e_1,\mathbf e_2$, a posteriori, also noting that $\bu_t$ has no a priori, well-defined trace. We will be able to give meaning to $\gamma_0[\bu_t\cdot \mathbf e_3]$ (as well as $\bu_{tt}$ and $\bv_t$) in certain dual senses below. See Section \ref{duals}, after the proof of Corollary \ref{coro} is concluded.
\end{remark}

\begin{remark}[Regularity and Weak Solutions II] \label{regweaksol2} While the generation of the semigroup provides ``smooth" solutions through regular data, the question of characterizing the domain of the generator remains. And, as mentioned in Remark \ref{regweaksol1}, the issue of regularity of weak solutions for hyperbolic-parabolic systems, such as this one, is a notorious one. While we know that smooth weak solutions recover the strong form of the problem \cite{multilayered}, it is nontrivial to characterize smoothing in the dynamics from smooth data. 
\end{remark}

\section{Dynamics Operator}\label{operator}
To define the dynamics operator---which will be shown to generate a $C_0$-semigroup on $X\equiv  \vec{U} \times \vec{L}^2(\Omega_b) \times L^2(\Omega_b) \times \vec{V}$---we will proceed to eliminate the Stokes pressure $p_f$ via elliptic theory, as motivated by \cite{avalos}. In fact, this is one of the main points of distinction with our approach here for incompressible Stokes and that in the  reference \cite{showfiltration} which treats slightly compressible Stokes. 
\subsection{Elimination of Fluid-Pressure and Associated Green's Maps}\label{elimp}
In line with \cite{avalos*}, we eliminate the pressure through appropriately defined Green's mappings \cite{redbook}. As in these previous references, elimination of the pressure through some form of a Leray projection is not practicable, owing to the coupled dynamics at the interface. 

In what follows, we note that for finite-energy (weak or semigroup) solutions, we will work with $p_f \in L^2(\Omega_f)$ for $a.e.$ $t$. This pressure acts as a Lagrange multiplier to enforce the divergence-free condition on the associated $\mathbf v$. Such $L^2$ objects do not have sufficient regularity to admit traces, and thus do not admit boundary conditions (e.g., the specification of being laterally periodic). However, in defining the auxiliary elliptic problems to eliminate $p_f$, we will need to appeal to the strong formulation of two elliptic problems in order to produce relevant ``lifting" operators which can be extended to all Sobolev indices. For instance, we declare that if a solution has $p_f \in H^1(\Omega_f)$, the (Dirichlet) lateral periodic boundary conditions will be in force for $p_f$; higher order boundary conditions are enforced in the same way.

Suppose $\mathbf F_f \equiv 0$ or $\mathbf F_f \in \mathbf V$ point-wise in time. Then we consider the fluid-pressure sub-problem, point-wise in time:
\begin{equation}\label{bigellip}
    \begin{cases}
        \Delta p_f = 0 &\text{ in } \Omega_f,\\
        \partial_{\vec{e}_3}p_f = \nu \Delta\vec{v} \cdot\vec{e}_3 &\text{ on } \Gamma_f,\\
        p_f = p_b + 2\nu\vec{e}_3\cdot \vec{D}(\vec{v})\vec{e}_3  &\text{ on } \Gamma_I,\\
        p_f ~\text{ is }~x_1\text{-periodic} ~\text{ and }~ x_2\text{-periodic}& \text{ on } \Gamma_{\text{lat}}.
    \end{cases}
\end{equation}
 Above, Laplace's equation is obtained from formally taking the divergence of the dynamic fluid equation in \eqref{stokes1}; the condition on $\Gamma_f$ is obtained as the inner product of the dynamic fluid equation with $\vec{e}_3$, restricted to $\Gamma_f$; and the condition on $\Gamma_I$ is simply \eqref{IC4}.

\begin{remark}
    There is some subtlety in the choice of the Dirichlet condition that we consider in \eqref{bigellip}. As the fluid-pressure appears in the interface condition \eqref{IC3} as well as \eqref{IC4}, we may instead take the inner product of \eqref{IC3} with $\vec{e}_3$ and write an alternative fluid-pressure sub-problem
    \begin{equation}\label{bigellip2}
        \begin{cases}
            \Delta p_f = 0 &\text{ in } \Omega_f,\\
            \partial_{\vec{e}_3}p_f = \nu \Delta\vec{v} \cdot\vec{e}_3 &\text{ on } \Gamma_f,\\
            p_f = \alpha p_b - \vec{e}_3 \cdot \sigma^E(\vec{u})\vec{e}_3 + 2\nu\vec{e}_3\cdot \vec{D}(\vec{v})\vec{e}_3  &\text{ on } \Gamma_I,\\
            p_f ~\text{ is }~x_1\text{-periodic} ~\text{ and }~ x_2\text{-periodic}& \text{ on } \Gamma_{lat}.
        \end{cases}
    \end{equation}
We then have that, from these two expressions above for $p_f$, one of either Biot or Stokes pressures can be eliminated on $\Gamma_I$.
%\begin{equation}
%p_b\big|_{\Gamma_I} = \dfrac{-1}{1-\alpha}\mathbf e_3\cdot \sigma^E(\bu)\mathbf e_3
%\end{equation}
%OR
%\begin{equation}
%p_b\big|_{\Gamma_I} = \dfrac{1}{1+\alpha}\mathbf e_3\cdot \sigma^E(\bu)\mathbf e_3-4\nu\mathbf e_3\mathbf D(\mathbf v)\mathbf e_3
%\end{equation}
\end{remark}

With the representation \eqref{bigellip} in mind, we define the Neumann and Dirichlet Green's maps $N_f$ and $D_I$ by
\[
    \phi \equiv N_fg \iff 
    \begin{cases}
        \Delta \phi = 0 & \text{in} ~\Omega_f;\\
        \partial_{\vec{e}_3} \phi = g &  \text{on} ~ \Gamma_f;\\
        \phi = 0 &  \text{on} ~ \Gamma_I;
    \end{cases}
    \quad \quad
    \psi \equiv D_I h \iff
    \begin{cases}
        \Delta \psi = 0 &  \text{in} ~\Omega_f;\\
        \partial_{\vec{e}_3} \psi = 0 &  \text{on} ~ \Gamma_f;\\
        \psi = h &  \text{on} ~\Gamma_I;
    \end{cases}
\]
where we have suppressed the lateral periodicity implicit in the formulations on $\Gamma_{\text{lat}}$ (as in \eqref{bigellip}).
The maps $N$ and $D$ are well-defined via classical elliptic theory for sufficiently regular data\footnote{in this setting, as periodicity effectively eliminates corners and preserves the  boundary regions as ``separated" and smooth}, and can be extended (through transposition) for weaker data \cite{redbook,lionsmag}:
\begin{align*}
    N_f &\in \mathcal{L}(H^s(\Gamma_f),  H^{s + \frac{3}{2}}(\Omega_f)),\quad D_I \in \mathcal{L}(H^s(\Gamma_I), H^{s + \frac{1}{2}}(\Omega_f)),\quad s \in \mathbb{R}.
\end{align*}

Then for given $p_b$ and $\vec{v}$ in appropriate spaces, we can write $p_f = \Pi_1p_b + \Pi_2\vec{v} + \Pi_3\vec{v}$, where
\begin{equation}\label{PiOps}
    \Pi_1p_b = D_I[(p_b)_{\Gamma_I}],\quad  \Pi_2\vec{v} = D_I\big[(\vec{e}_3 \cdot [2\nu\vec{D}(\vec{v})\vec{e}_3])_{\Gamma_I}\big], \quad \Pi_3\vec{v} = N_f[(\nu\Delta\vec{v} \cdot \vec{e}_3)_{\Gamma_f}].
\end{equation}
Finally, for $i=1,2,3$, we denote $G_i = -\nabla\Pi_i$, as invoked in \eqref{A}---for convenience, we will work directly with the $\Pi_i$ below. With these $\Pi_i$ well-defined, we may proceed to define the (prospective) semigroup generator.

\subsection{Domain and Properties of $\cA$}\label{gendef}
Let us first define some dual trace spaces, associated to the spaces $H^s_{\#}(\Omega_i)$ for $i=b,f$. We first note that the image of the Dirichlet trace map $\gamma_0: H^1(\Omega_i) \to H^{1/2}(\Omega_i)$ is well-defined by the trace theorem, as are subsequent restrictions to any faces of the cubic geometry of $\Omega_b$ or $\Omega_f$ \cite{Grisvard}. To preserve and utilize the associated duality pairing with trace spaces such as $\gamma_0[H^{1}_{\#}(\Omega_i)]$---for instance to characterize higher order traces---we now explicitly define $H_{\#}^{-1/2}(\partial \Omega_f)$. The definition will then permit  such negative traces on any face via restriction (in the sense of distributions), and an analogous definitions will also hold for spaces defined on $\Omega_b$. Now, let $\Gamma_a \subset \Gamma_{\text{lat}}$ and $\Gamma_b \subset \Gamma_{\text{lat}}$ generically represent diametrically opposed faces of $\Omega_b$ or $\Omega_f$---namely $(a,b)=(\{x_1=0\}, \{x_1=1\})$ or $(\{x_2=0\}, \{x_2=1\})$. Then
\begin{align}\label{tracespaces}
H^{-1/2}_{\#}(\partial \Omega_f)=  \Big\{g \in H^{-1/2}(\partial \Omega_f)~:~g \big|_{\Gamma_a} = g\big|_{\Gamma_b} \in H^{-1/2}\big((0,1)\times (-1,0)\big)~\text{ for each pair}~\Gamma_a, \Gamma_b\Big\}.
\end{align}
We obtain the spaces $H^{-r}_{\#}(\partial \Omega_i)$, for $r=1$ and $r=3/2$, mutatis mutandis. 

The differential action of our prospective generator $\mathcal A$ is: \begin{equation}\label{diffaction}
   \mathcal A\mathbf y\equiv  \begin{pmatrix}
        \vec{w}\\
        -\mathcal{E}_0\vec{u} - \alpha \rho_b^{-1}\nabla p_b\\
        -\alpha c_0^{-1}\nabla\cdot \vec{w} -A_0p_b\\
        \rho_f^{-1}\nu \Delta \vec{v} - \rho_f^{-1}\nabla\pi(p_b,\bv)
    \end{pmatrix}
    =
    \begin{pmatrix}\bw_1^*\\\bw_2^*\\q^*\\\mathbf f^*\end{pmatrix} \in X, \quad \vec{y} = [\vec{u},\vec{w},p_b,\vec{v}]^T \in \mathcal{D}(\mathcal{A}),
\end{equation}
where we denote $\pi = \pi(p_b,\vec{v})\equiv \Pi_1p_b+\Pi_2 \mathbf v+\Pi_3\mathbf v \in L^2(\Omega_f)$ as a solution to the following elliptic problem, for given $p_b$ and $\vec{v}$ emanating from $\vec{y} \in \mathcal{D}(\mathcal{A})$ (i.e., having adequate regularity to form the RHS below):
\begin{equation}\label{pi}
    \begin{cases}
        \Delta \pi = 0 &\in L^2(\Omega_f),\\
        \partial_{\vec{e}_3}\pi = \nu \Delta \vec{v} \cdot\vec{e}_3& \in H^{-3/2}(\Gamma_f),\\
        \pi = p_b + 2\nu\vec{e}_3\cdot \vec{D}(\vec{v})\vec{e}_3  &\in H^{-1/2}(\Gamma_I),\\
        \pi  \in H^{-1/2}_{\#}(\partial \Omega_f).
    \end{cases}
\end{equation}

From the membership $\mathcal A\mathbf y \in X$, we  then read off:  $$\vec{w} = \bw_1^\ast \in \vec{U} \subset \vec{H}_\#^1(\Omega_b).$$
The above action then generates the  {\it elliptic}  system:

$$\begin{cases}
        -\mathcal{E}_0\vec{u} - \alpha\nabla p_b = \bw_2^\ast & \in \vec{L}^2(\Omega_b),\\
        -A_0p_b = q^\ast + \alpha\nabla\cdot{\bw_1^*} &\in L^2(\Omega_b),\\
        \rho_f^{-1}[\nu \Delta \vec{v} - \nabla\pi] = \mathbf f^\ast &\in \vec{V},\\
        \nabla\cdot\vec{v} = 0,\\
     p_b\big|_{\Gamma_b} = 0,\quad \vec{v}|_{\Gamma_f} = 0, \quad \vec{u}|_{\Gamma_b} = 0\\
        \vec{v}\cdot\vec{e}_3 +  k\partial_{\vec{e}_3} p_b = \vec{w}\cdot\vec{e}_3 &\in H^{-1/2}(\Gamma_I),\\
        \beta\vec{v}\cdot\btau +  \btau \cdot [2\nu\vec{D}(\vec{v})-\pi]\vec{e}_3 =  \beta \vec{w}\cdot\btau&\in H^{-1/2}(\Gamma_I),\\
        \sigma_b\vec{e}_3 = [2\nu\mathbf D(\bv)-\pi]\mathbf e_3 & \in \vec{H}^{-1/2}(\Gamma_I),\\
        p_b = - 2\nu\vec{e}_3\cdot\mathbf D(\bv)\mathbf e_3+\pi &\in H^{-1/2}(\Gamma_I).
    \end{cases} $$
Again, $\pi=\pi(p_b,\bv)$, as given in \eqref{pi}.

 This suggests the proper definition of a domain for $\mathcal A$ below, which is one of the main contributions of this treatment. All restrictions below are interpreted as traces, in the generalized sense, where appropriate.
\begin{definition}[Domain of $\cA$] \label{diffdomain} Let $\vec{y}\in X$. Then $\vec{y} = [\vec{u},\vec{w},p,\vec{v}]^T \in \mathcal{D}(\mathcal{A})$ if and only if the following bullets hold:
\begin{itemize}
    \item $\vec{u} \in \vec{U}$  with  $\mathcal{E}_0(\vec{u}) \in \vec{L}^2(\Omega_b)$ (so that $[{\sigma_b(\vec{u})\mathbf n}]\big|_{\partial\Omega_b} \in \vec{H}^{-1/2}(\partial \Omega_b)$);
    \item $\vec{w} \in \vec{U}$;
    
    \item $p \in  H_\#^1(\Omega_b)$ with $A_0p \in L^2(\Omega_b)$ (so that $\left.\partial_{\mathbf n}p\right|_{\partial\Omega_b} \in {H}^{-1/2}(\partial \Omega_b)$);
    
        \item $[{\sigma_b(\vec{u})\mathbf n}] \in \vec{H}_{\#}^{-1/2}(\partial \Omega_b)$ (then  
       $\sigma^E(\bu)\mathbf n\cdot \mathbf n \in H^{-1/2}_{\#}(\partial \Omega_b)$);
       \item  $\partial_{\mathbf n} p \in H^{-1/2}_{\#}(\partial \Omega_b)$
    
    \item $\vec{v} \in \vec{H}_\#^1(\Omega_f) \cap \vec{V}$ with $\vec{v}|_{\Gamma_f} = \vec{0}$;
    \item There exists $\pi \in L^2(\Omega_f)$ such that $$\nu\Delta \vec{v} - \nabla\pi \in \vec{V},$$ where $\pi=\pi(p,\bv)$ is as in \eqref{pi} (and so
 $\left.\sigma_f\mathbf n\right|_{\partial \Omega_f}\in \vec{H}^{-\frac{1}{2}}(\partial\Omega_f)$ and  $\left.\pi\right|_{\partial \Omega_f} \in H^{-\frac{1}{2}}(\partial\Omega_f)$;
   
    \item $2\nu \mathbf D(\bv) \big|_{\partial \Omega_f} \in \mathbf H^{-1/2}_{\#}(\partial \Omega_f)$ and ~$\pi \big|_{\partial\Omega_f} \in H^{-1/2}_{\#}(\partial \Omega_f)$;
    
    \item  $\left.\Delta \vec{v}\cdot\mathbf n\right|_{\partial \Omega_f} \in H^{-3/2}(\partial\Omega_f)$);
    
    \item  $\restri{(\vec{v} - \vec{w})\cdot\vec{e}_3} = \restri{-k\nabla p\cdot\vec{e}_3} \in H^{-1/2}(\Gamma_I)$;
    
    \item $\restri{\beta(\vec{v}-\vec{w})\cdot\btau} = \restri{-\btau\cdot\sigma_f\vec{e}_3} \in H^{-1/2}(\Gamma_I)$;
    
    \item $\restri{-\vec{e}_3 \cdot \sigma_f\vec{e}_3} = \restri{p} \in H^{-1/2}(\Gamma_I)$;
    
    %\hspace{1cm} $\text{or } H^{3/2}(\Gamma_I) \text{ if } \gamma=0$
    
    \item $\restri{\sigma_b\vec{e}_3} = \restri{\sigma_f\vec{e}_3} \in \vec{H}^{-1/2}(\Gamma_I)$.

\end{itemize}
\end{definition}
\noindent The above terms with  traces in  Sobolev spaces of negative indices are well-defined (as extensions) when $\vec{y} \in \mathcal{D}(\cA)$.

\begin{remark}\label{regularity*}
For any $\vec{y} \in \mathcal{D}(\cA)$, we can identify additional regularity through the trace theorem,  derivative-counting, and the use of elliptic regularity for our  toroidal domain:
\begin{itemize}
    \item  $\restri{-\btau\cdot\sigma_f\vec{e}_3}=\left.\beta(\vec{v}-\vec{w})\cdot\btau\right|_{\partial \Omega_f} \in H^{1/2}(\Gamma_I)$;~~the trace theorem applied to $\bv$ and $\bw$,
    
    \item $\restri{\partial_{\vec{e}_3}p}= \restri{(\vec{v} - \vec{w})\cdot\vec{e}_3}  \in H^{1/2}(\Gamma_I)$; ~~the trace theorem applied to $\bv$ and $\bw$,
    
    \item $p \in H_\#^2(\Omega_b)$; ~~invoking elliptic regularity for $p$ with $H^{1/2}$ Neumann trace from above,
    
    \item $\restri{-\vec{e}_3 \cdot \sigma_f \vec{e}_3} = \restri{p} \in H^{3/2}(\Gamma_I)$; ~~from the pressure coupling condition and above
    
    \item adding normal and tangential parts for the fluid normal stress, we have $$\restri{\sigma_f\vec{e}_3} = \restri{\sigma_b\vec{e}_3} \in \vec{H}^{1/2}(\Gamma_I).$$
\end{itemize}
We note that although there seems to be generous smoothing in several variables from domain membership, there is no smoothing in $\bv$ and $\pi$ through the auxiliary pressure equation \eqref{pi}.  This is a feature of the hyperbolic-parabolic coupling via low-regularity interface conditions.
\end{remark}

\section{Semigroup Generation: Theorem \ref{th:main1}}
\begin{proof}[Proof of Theorem \ref{th:main1}] We prove dissipativity directly using the definition of the $\mathcal D(\cA)$ above, and follow it by showing the range condition for $\cA$ on $X$. These two facts, in conjunction, allow the  application of the Lumer-Philips theorem \cite{pazy} to obtain semigroup generation, as stated in Theorem \ref{th:main1}; subsequently, the results in Corollary \ref{coro} and Theorem \ref{th:main2} are obtained directly.

\subsection{Dissipativity} \label{dissip} Let $\vec{y} = [\vec{u},\vec{w},p,\vec{v}]^T \in \mathcal{D}(\cA)$. Recall that $X\equiv  \vec{U} \times \vec{L}^2(\Omega_b) \times L^2(\Omega_b) \times \vec{V}$ (with appropriately defined inner-product \eqref{innerX}), and note that
\begin{align*}
    (\mathcal{A}\vec{y},\vec{y})_X &=  \left(\begin{pmatrix}
        \vec{w}\\[.1cm]
        -\mathcal{E}_0\vec{u} -\alpha\rho_b^{-1} \nabla p\\[.2cm]
        -A_0p -\alpha c_0^{-1}\nabla\cdot \vec{w}\\[.1cm]
       \rho_f^{-1}[\nu \Delta \vec{v} - \nabla\pi]
    \end{pmatrix},  \begin{pmatrix}\vec{u}\\\vec{w}\\p\\\vec{v}\end{pmatrix}\right)_X\\
    =&~ a_E(\vec{w},\vec{u}) + (\nabla\cdot \sigma^E\vec{u},\vec{w})_{0,\Omega_b} - (\alpha\nabla p,\vec{w})_{0,\Omega_b} + (k\Delta p,p)_{0,\Omega_b} \\
    &~- (\alpha\nabla\cdot\vec{w},p)_{0,\Omega_b} + (\nu\Delta\vec{v},\vec{v})_{0,\Omega_f} - (\nabla\pi,\vec{v})_{0,\Omega_f}\\
    =&~ -k\norm{\nabla p}_{0,\Omega_b}^2 - 2\nu\norm{\vec{D}(\vec{v})}_{0,\Omega_f}^2 + I_{\text{bdry}},
\end{align*}
where the conditions built into $\mathcal{D}(\cA)$ yield the sum of relevant terms for the normal and tangential coupling conditions, and
\begin{align}\label{Ibdry}
    I_{\text{bdry}} &= -\int_{\Gamma_I} \sigma_b(\vec{u},p)\vec{e}_3 \cdot \vec{w}  - \int_{\Gamma_I}\left(k\partial_{\vec{e}_3}p\right)p + \int_{\Gamma_I} \sigma_f(\vec{v},\pi)\vec{e}_3 \cdot \vec{v} =  -\beta\int_{\Gamma_I}[(\vec{v} - \vec{w})\cdot\btau]^2.
\end{align} Note that, above, we have utilized: Green's Theorem in the $\cE_0\bu$ term, justified by extension in this context;  the Divergence Theorem in  terms involving $\alpha$; and the divergence-free condition on $\bv\in \bV$. Of course, we have also invoked the periodic boundary conditions on $\Gamma_{\text{lat}}$ and homogeneous conditions on $\Gamma_f$ and $\Gamma_b$, away from the interface $\Gamma_I$

Hence, we obtain that ~$(\cA \vec{y},\vec{y})_X \leq 0$ for all $\vec{y} \in \mathcal{D}(\cA)$, and thus $\mathcal A$ is {\em dissipative} on $X$ using the domain $\mathcal D(\mathcal A).$

\subsection{Maximality}\label{max1}
To show that $\cA$  is indeed {\it m}-dissipative, we consider, for a given $\mathcal F = [\vec{f}_1,\vec{f}_2,f_3,\vec{f}_4]^T \in X$, the resolvent system $(\varepsilon\vec{I} - \cA)\vec{y} = \mathcal{F}$ for $\vec{y} = [\vec{u},\vec{w},p,\vec{v}]^T \in \mathcal{D}(\cA)$ taken with $\varepsilon=1$, i.e., given $\mathcal F \in X$ we must find $\mathbf y \in \mathcal D(\cA)$ so that
\begin{equation}\label{resolvent1}
\begin{cases}
    \vec{u} - \vec{w} = \vec{f}_1 \in \vec{U},\\
    \vec{w} + \cE_0(\vec{u}) + \rho_b^{-1}\alpha \nabla p = \vec{f}_2 \in \vec{L}^2(\Omega_b),\\
    p + c_0^{-1}\alpha\nabla\cdot\vec{w} + A_0p = f_3 \in L^2(\Omega_b),\\
    \vec{v} - \rho_f^{-1}\nu \Delta \vec{v} + \rho_f^{-1}\nabla \pi = \vec{f}_4 \in \vec{V}
\end{cases}
\end{equation}
is solved {\em strongly} (i.e., each line holds in the listed function space).

From the first equation, we set $\vec{w} = \vec{u} - \vec{f}_1$, so that we collapse the system to
\begin{equation}\label{resolvent2}
\begin{cases}
    \vec{u} + \cE_0(\vec{u}) + \rho_b^{-1}\alpha \nabla p = \vec{f}_2 +\vec{f}_1 \in L^2(\Omega_b),\\
    p + c_0^{-1} \alpha\nabla\cdot\vec{u} + A_0p = f_3+\alpha c_0^{-1}\nabla \cdot \vec{f}_1 \in L^2(\Omega_b),\\
    \vec{v} - \rho_f^{-1}\nu \Delta \vec{v} + \rho_f^{-1}\nabla \pi = \vec{f}_4 \in \vec{V}.
\end{cases}
\end{equation}
We will convert the above system in \eqref{resolvent2} to a mixed-variational weak formulation, then proceed to solve it and finally recover domain membership.
\subsubsection{Weak Solution to Resolvent System}
We will use a version of Babu\v{s}ka-Brezzi (Theorem \ref{bbt}) \cite{kesavan,avalos*} to solve the mixed-variational form of the above elliptic system.
Recall that $\vec{H}^1_{\#,*}(\Omega_f) = \{\vec{v} \in \vec{H}_\#^1(\Omega_f)~:~\vec{v}|_{\Gamma_f} = 0\}$ (and similarly for $H^1_{\#,*}(\Omega_b))$ and define the auxiliary product space
\begin{align}\label{sigmaspace}
    \Sigma = \vec{U}\times  H_{\#,*}^1(\Omega_b)\times \vec{H}^1_{\#,*}(\Omega_f),
\end{align}
where we have endowed the spaces $\vec{U}$ with the norm $\|\cdot\|_E$ and $\vec{H}^1_{\#,*}(\Omega_f)$  with the norm \begin{equation} \|\cdot\|_{f,\ast} \equiv \| \mathbf D (\cdot)\|_{0,\Omega_f}.\end{equation}  Note that  $\|\cdot\|_{f,\ast}$ is equivalent to the standard $\vec{H}^1(\Omega_f)$ norm on $\vec{H}^1_{\#,*}(\Omega_f)$, by Korn's and Poincar\'e's inequalities \cite{kesavan}. From the LHS of \eqref{resolvent2}, we generate a bilinear form for $\phi = [\vec{u},p,\vec{v}]^T \in \Sigma$ defined below as $a(\cdot,\cdot)$, as well as a continuous bilinear form
\begin{equation}\label{bBilinear}
    b(\psi,\pi) = -\int_{\Omega_f}\pi\nabla\cdot\bzeta\,d\vec{x} \quad \forall\, \psi \in \Sigma, ~\pi \in L^2(\Omega_f),~\forall~\psi=[\bxi, q, \bzeta]^T.
\end{equation}

\begin{align}\label{aBilinear}
    a(\phi,\psi) =&~ \rho_b\int_{\Omega_b}\vec{u}\cdot\bxi\,d\vec{x} + a_E(\bu,\bxi) + c_0\int_{\Omega_b} pq\,d\vec{x} + k\int_{\Omega_b}\nabla p \cdot \nabla q\,d\vec{x} \nn\\
    &+~\rho_f \int_{\Omega_f} \vec{v}\cdot\bzeta\,d\vec{x} + 2\nu\int_{\Omega_f} \vec{D}(\vec{v}) : \vec{D}(\bzeta)\,d\vec{x} \nn\\
    &-\alpha \int_{\Omega_b} p (\nabla\cdot \bxi) \, d\mathbf x +\alpha \int_{\Omega_b} (\nabla \cdot \bu)q \, d\mathbf x \nn\\
    &+ \int_{\Gamma_I}p\left((\bzeta - \bxi)\cdot\vec{e}_3\right)\,d\Gamma_I - \int_{\Gamma_I} \left((\vec{v}-\vec{u})\cdot\vec{e}_3\right)q\,d\Gamma_I  \nn\\
    &+ \beta \int_{\Gamma_I}\big((\vec{v} - \vec{u})\cdot\btau\big) \big((\bzeta - \bxi)\cdot\btau\big)\,d\Gamma_I \nn\\
    \forall\, \psi =~ [\bxi,q,\bzeta]^T & \in \Sigma.
\end{align}
We want to find unique solutions $\phi = [\vec{u},p,\vec{v}]^T \in \Sigma$ and $\pi \in L^2(\Omega_f)$ to the system
\begin{equation}\label{BBprob}
\begin{cases}
    a(\phi,\psi) + b(\psi,\pi) = \rho_b (\vec{f}_1+\vec{f}_2,\bxi)_{0,\Omega_b}  +  (c_0f_3+\alpha \nabla \cdot \vec{f}_1,q)_{0,\Omega_b} + \rho_f(\vec{f}_4,\bzeta)_{0,\Omega_f} \\
    \phantom{a(\phi,\psi) + b(\psi,\pi) =} + (\vec{f}_1\cdot\vec{e}_3,q)_{\Gamma_I} - (\vec{f}_1\cdot\btau,\beta(\bzeta-\bxi)\cdot\btau)_{\Gamma_I}\quad &\forall\, \psi = [\bxi,q,\bzeta]^T \in \Sigma, \\
    b(\phi,\eta) = 0 \quad &\forall\, \eta \in L^2(\Omega_f).
    \end{cases}
\end{equation}
We can then define $\Phi \in \Sigma'$ as \begin{align} \Phi (\psi) \equiv&~ \rho_b(\vec{f}_1+\vec{f}_2,\bxi)_{0,\Omega_b}  + c_0(f_3+\alpha \nabla \cdot \vec{f}_1,q)_{0,\Omega_b} + \rho_f(\vec{f}_4,\bzeta)_{0,\Omega_f},\nonumber\\
&+ (\vec{f}_1\cdot\vec{e}_3,q)_{\Gamma_I} - (\vec{f}_1\cdot\btau,\beta(\bzeta-\bxi)\cdot\btau)_{\Gamma_I}~~\forall \psi \in \Sigma. \end{align}
The regularity of the data $\mathcal F \in X$ is sufficient to yield directly that $\Phi \in \Sigma'$ by applying Cauchy-Schwarz and the trace theorem. 

Finally, we define one more auxiliary space:
\[
    Z \equiv \{\psi =~ [\bxi,q,\bzeta]^T \in \Sigma ~:~ \text{div}~\bzeta = 0\}.
\]
We note that $Z$ is identified as the subspace $\Sigma$ on which $b(\cdot,\eta)=0$ for every $\eta \in L^2(\Omega_f)$.
Given this space $Z$, for any $\phi = [\vec{u},p,\vec{v}]^T \in Z$, we have that $\text{div}~\vec{v} = 0$, and 
\begin{align*}
    a(\phi,\phi) =&~ \left(\rho_b\norm{\vec{u}}_{0,\Omega_b}^2  + \int_{\Omega_b}\sigma^E(\vec{u}) : \vec{D}(\vec{u})\,d\vec{x}\right) + c_0\norm{p}_{0,\Omega_b}^2 + k\norm{\nabla p}_{0,\Omega_b}^2\\
    &+~ \left(\rho_f\norm{\vec{v}}_{0,\Omega_f}^2 + 2\nu\norm{\vec{D}(\vec{v})}_{0,\Omega_f}^2\right) + \int_{\Gamma_I}\big((\vec{v} - \vec{w})\cdot\btau\big)^2\,d\Gamma_I\\
%    \geq&~ \norm{\vec{u}}_E^2 + \min(1,k)\norm{p}_{1,\Omega_b}^2 + \nu\norm{\nabla\vec{v}}_{0,\Omega_f}^2\\
    \geq&~C(k,\nu,\rho_b,\rho_f,\lambda,\mu,c_0)\norm{\phi}_\Sigma^2 \quad\quad \forall\, \phi \in Z.
\end{align*}
That is, {\em we have shown that $a(\cdot,\cdot)$ is $Z$-elliptic.} 

Now, by the continuity of the trace mapping and the Sobolev embedding,  $\vec{H}_\#^1(\Omega_i) \xrightarrow{\gamma_0} \vec{H}^{1/2}(\partial \Omega_i) \hookrightarrow \vec{L}^2(\partial \Omega_i)$ for $i=b,f$, there exist constants $C_{tb},C_{tf}>0$ (of course, each depending on the geometry of $\Omega_i$) such that
\[
    \norm{\boldsymbol \theta}_{0,\partial \Omega_i} \leq C_{ti}\norm{\boldsymbol \theta}_{1,\Omega_i} \quad \forall\, {\boldsymbol \theta} \in \vec{H}^1_\#(\Omega_i), i=b,f.\] In what follows, we do not explicitly denote the dependence of constants on the domains $\Omega_i$ through the Poincar\'e and Korn constants.
Then for any $\phi = [\vec{u},p,\vec{v}]^T, ~~\psi = [\bxi,q,\bzeta]^T \in \Sigma$,
\begin{align}\label{bdryEst}
    \int_{\Gamma_I} \big((\vec{v} - \vec{u})\cdot\btau\big) \big((\bzeta - \bxi)\cdot\btau\big)\,d\Gamma_I %&\leq \norm{(\vec{v} - \vec{w})\cdot\btau}_{0,\Gamma_I} \norm{(\bzeta - \bxi)\cdot\btau}_{0,\Gamma_I}\\
    &\leq \left(\norm{\vec{v}\cdot\btau}_{0,\Gamma_I} + \norm{\vec{u}\cdot\btau}_{0,\Gamma_I}\right)\left(\norm{\bzeta\cdot\btau}_{0,\Gamma_I} + \norm{\bxi\cdot\btau}_{0,\Gamma_I}\right) \nn\\
    &\leq \left(C_{tf}\norm{\vec{v}}_{1,\Omega_f} + C_{tb}\norm{\vec{u}}_{1,\Omega_b}\right)\left(C_{tf}\norm{\bzeta}_{1,\Omega_f} + C_{tb}\norm{\bxi}_{1,\Omega_b}\right) .
\end{align}
Additionally, we have, in the standard fashion, obtained from Poincar\'e's and Korn's inequalities the existence of constants $C_{P_b}, C_{P_f}>0$ such that, 
\begin{align}\label{uvPoincare}
    \|\vec{u}\|_{0,\Omega_b} & \leq C_{P_b}\|\vec{u}\|_E ~~\quad \forall\, \vec{u} \in \vec{U},\nn\\
    \|\vec{v}\|_{0,\Omega_f} &\leq  C_{P_f}\|\vec{v}\|_{f,\ast} \quad \forall\, \vec{v} \in \vec{H}^1_{\#,\ast}(\Omega_f).
\end{align}
Recalling the equivalence of the $\vec{H}^1(\Omega_b)$ norm and $\|\cdot\|_E$ on $\vec{U}$, $H^1(\Omega_b)$ and $||\nabla \cdot ||_{0,b}$ on $H^1_{\#,*}(\Omega_b)$, as well as the $\vec{H}^1(\Omega_f)$ norm and $\|\cdot\|_{f,\ast}$ on $\vec{H}^1_{\#,\ast}(\Omega_f),$
we obtain that for $\phi = [\vec{u},p,\vec{v}]^T$, $\psi = [\bxi,q,\bzeta]^T \in \Sigma$,
\begin{align*}
    a(\phi,\psi) \leq&~ \rho_b\norm{\vec{u}}_{0,\Omega_b}\norm{\bxi}_{0,\Omega_b} + \norm{\sigma^E(\vec{u})}_{0,\Omega_b}\norm{\vec{D}(\bxi)}_{0,\Omega_b} + \norm{p}_{0,\Omega_b}\norm{q}_{0,\Omega_b} + k\norm{\nabla p}_{0,\Omega_b}\norm{\nabla q}_{0,\Omega_b}\\
    &+~ \norm{\vec{v}}_{0,\Omega_f} \norm{\bzeta}_{0,\Omega_f} + 2\nu\norm{\vec{D}(\vec{v})}_{0,\Omega_f} \norm{\vec{D}(\bzeta)}_{0,\Omega_f} + \int_{\Gamma_I} \big((\vec{v} - \vec{u})\cdot\btau\big) \big((\bzeta - \bxi)\cdot\btau\big)\,d\Gamma_I\\
   \leq&~ C(k,\nu,\rho_b,\rho_f,\lambda,\mu,c_0)\norm{\phi}_\Sigma\norm{\psi}_\Sigma.
\end{align*}
{\em This yields that the bilinear form $a(\cdot,\cdot)$ is continuous on $\Sigma$.}

Now, to invoke Babu\v{s}ka-Brezzi (stated below as Theorem \ref{bbt}), we must establish the {\em inf-sup condition} in our situation. We adapt the boundary value problem used in \cite{avalos*} to obtain the inf-sup property in this setting. Namely, let $\eta \in L^2(\Omega_f)$. Moreover, let $\mu(x)$ be a nonzero, smooth (spatial) cutoff function. Therewith, we consider the following boundary value problem: \\ Find $\boldsymbol\omega \in \vec{H}^1_{\#,\ast}(\Omega_f)$ such that
\begin{equation}\label{infsupBVP}
\begin{cases}
    \text{div}~\boldsymbol\omega = -\eta &\text{ in } \Omega_f,\\[.35cm]
    \boldsymbol\omega|_{\partial \Omega_f} = \begin{cases} 0 &\text{ on } \Gamma_f\cap \Gamma_{\text{lat}}, \\[.2cm]  \displaystyle{\dfrac{-\int_{\Omega_f}\eta\, d\Omega_f}{\int_{\Gamma_I}\mu ~d\Gamma_I}\mu(\mathbf x)\vec{e}_3}  &\text{ on } \Gamma_I. \end{cases} 
\end{cases}
\end{equation}
Since the given boundary data is certainly in $H^{1/2}(\partial\Omega_f)$, and moreover vanishes off $\Gamma_I$,  we have a solution $\boldsymbol \omega \in H^1_{\#,*}(\Omega_f)$ to \eqref{infsupBVP}, see, e.g., \cite[Ch. III.3]{galdi}.  We have the supporting the estimate,
\[
    \|\boldsymbol\omega\|_{1,\Omega_f} \leq c \|\eta\|_{0,\Omega_f},
\]
for some $c>0$. 
Then, for the given $\eta \in L^2(\Omega_f)$,
\begin{equation}\label{infsupv}
    \sup_{\substack{\vec{v} \in \vec{H}^1_{\#,*}(\Omega_f)\\\vec{v} \neq \vec{0} }}\frac{-\int_{\Omega_f} \eta\text{div}(\vec{v})d\vec{x}}{\|\mathbf D(\vec{v})\|_{0,\Omega_f}} \geq \frac{-\int_{\Omega_f}\eta\text{div}(\boldsymbol\omega)\,d\vec{x}}{\|\mathbf D(\boldsymbol\omega)\|_{0,\Omega_f}} \geq \frac{\|\eta\|_{0,\Omega_f}^2}{\|\mathbf D(\boldsymbol\omega)\|_{0,\Omega_f}} \geq c\norm{\eta}_{0,\Omega_f}.
\end{equation}
We can reconcile the dominating term in \eqref{infsupv} with the bilinear form $b(\cdot,\cdot):\Sigma\times L^2(\Omega_f) \to \mathbb{R}$, defined in \eqref{bBilinear}, by the relation  
\begin{equation}\label{infsupbv}
    \sup_{\substack{\phi \in \Sigma, \phi \neq \vec{0}\\\vec{u} = \vec{0}, p = 0}} \frac{b(\phi,\pi)}{\|\phi\|_\Sigma} = \sup_{\substack{\vec{v} \in \vec{H}^1_{\#,*}(\Omega_f)\\\vec{v} \neq \vec{0} }}\frac{-\int_{\Omega_f} \pi\text{div}(\vec{v})d\vec{x}}{\|\mathbf D(\vec{v})\|_{0,\Omega_f}}.
\end{equation}
Noting the subset of $\Sigma$ under the supremum, it is clear that, for the given $\eta \in L^2(\Omega_f)$,
\begin{equation*}%\label{infsup}
    \sup_{\substack{\phi \in \Sigma \\ \phi \neq \vec{0}}} \frac{b(\phi,\pi)}{\|\phi\|_\Sigma} \geq \sup_{\substack{\phi \in \Sigma, \phi \neq \vec{0}\\\vec{u} = \vec{0}, p = 0}} \frac{b(\phi,\pi)}{\|\phi\|_\Sigma}.
\end{equation*}
On the other hand, for a given $\phi \in [\bu,p,\bv]^T$, the term $\frac{b(\phi,\pi)}{\|\phi\|_\Sigma}$ is bounded above by $\frac{b(\phi,\pi)}{\|\mathbf D(\vec{v})\|_{\Omega_f}}$ for every $\phi \in \Sigma$, since the presence of any $\vec{u} \in \vec{U}$ or $p \in H^1_\#(\Omega_b)$ appears only as additional non-negative terms in $\|\phi\|_\Sigma$. Hence, 
\begin{equation}\label{infsup}
    \sup_{\substack{\phi \in \Sigma \\ \phi \neq \vec{0}}} \frac{b(\phi,\pi)}{\|\phi\|_\Sigma} = \sup_{\substack{\phi \in \Sigma, \phi \neq \vec{0}\\\vec{u} = \vec{0}, p = 0}} \frac{b(\phi,\pi)}{\|\phi\|_\Sigma} \geq c\|\eta\|_{0,\Omega_f} \quad \forall\, \eta \in L^2(\Omega_f),
\end{equation}
where the the latter inequality follows from \eqref{infsupv} and \eqref{infsupbv}. Thus, the inf-sup condition is verified, and Theorem \ref{bbt} can be invoked to guarantee a unique pair of solutions $\phi = [\vec{u},p,\vec{v}]^T \in \Sigma$, $\pi \in L^2(\Omega_f)$ to the problem \eqref{BBprob}. 

\subsubsection{Recovering the Weak Solution in the Domain}
At this point, we proceed to verify that the weak solution that we have obtained,  $\phi = [\vec{u},p,\vec{v}]^T \in \Sigma$, yields a proper element of $\mathcal D(\mathcal A)$. Namely, we want to solve \eqref{resolvent2} with the weak solution we have just obtained, and thereby recover the unique solution $[\vec{u},\vec{w},p,\vec{v}]^T \in \mathcal{D}(\cA)$ to the resolvent equation \eqref{resolvent1}. 

Recall that $\mathbf f_4 \in \mathbf V \subset \mathbf L^2(\Omega_f)$. Since $\phi \in \Sigma$ and $\pi \in L^2(\Omega_f)$ satisfy the first equation of \eqref{BBprob} for all $\psi = [\bxi,q,\bzeta]^T \in \Sigma$, we may choose $\bxi = \vec{0}$ and $q = 0$ to obtain for the pair of solutions $[\vec{v},\pi]^T \in \vec{H}^1_{\#,\ast}(\Omega_f) \times L^2(\Omega_f)$ that
\begin{align*}
   \rho_f (\vec{v},\bzeta)_{0,\Omega_f} + (2\nu\vec{D}(\vec{v}),\vec{D}(\bzeta))_{0,\Omega_f} - (\pi,\nabla\cdot\bzeta)_{0,\Omega_f} +&\int_{\Gamma_I}p\bzeta \cdot \mathbf e_3d\Gamma_I+\beta\int_{\Gamma_I}[(\bv-\bu)\cdot \btau][\bzeta \cdot \btau]d\Gamma_I\\
    =&~ \rho_f(\vec{f}_4,\bzeta)_{0,\Omega_f}, \quad \quad \forall\, \bzeta \in \vec{H}^1_{\#,\ast}(\Omega_f).
\end{align*}
Thus, by restricting to $\bzeta \in [C_0^\infty(\Omega_f)]^3 \subset \mathbf H^1_{\#,*}(\Omega_f)$, we observe that
$$\rho_f\vec{v} - \nu \Delta \vec{v} + \nabla \pi = \rho_f\mathbf f_4$$ in the sense of distributions. 
By the density of $[C_0^\infty(\Omega_f)]^3$ in $\vec{L}^2(\Omega_f)$, we then have the $a.e.$~ $\Omega_f$ equality
\begin{equation}\label{resolventd}
 \rho_f   \vec{v} - \nu \Delta \vec{v} + \nabla \pi = \rho_f\vec{f}_4 \quad \text{ in } \Omega_f.
\end{equation}
In a similar manner, we may choose test functions $[\bxi,0,\vec{0}]^T,~[\vec{0},q,\vec{0}]^T \in \Sigma$ to have that $\{[\bu,p,\bv]^T,\pi\} \in \Sigma \times L^2(\Omega_f)$ solves $a.e.$ the interior equations in \eqref{resolvent2}. Finally, with $\vec{u} \in \vec{U}$ and $\vec{f}_1 \in \vec{U}$, we take $\vec{w} = \vec{u} - \vec{f}_1 \in \vec{U}$, and rearrange to obtain
\begin{equation}\label{resolventa}
    \vec{u} - \vec{w} = \vec{f}_1 \in \vec{U},
\end{equation}
and then we may plug in $\vec{w}$ for the quantities $\vec{u}-\vec{f}_1$ that appear in \eqref{resolvent2} to yield the full resolvent system \eqref{resolvent1}.

We now proceed to demonstrate the additional requirements for the weak solution $[\vec{u},\vec{w},p,\vec{v}]^T$ of \eqref{resolvent1} to have domain membership. Since $\vec{v} \in \vec{H}^1_{\#,\ast}(\Omega_f)$, we read off from the second equation in \eqref{BBprob} that $(\eta,\nabla\cdot \bv)_{L^2(\Omega_f)}=0$ for all $\eta \in L^2(\Omega_f)$, and thus $\text{div}~\vec{v} = 0$ almost everywhere in $\Omega_f$; this and Green's identity give $\vec{v} \in \vec{H}^1_{\#,\ast}(\Omega_f) \cap \vec{V}$. Then since $[\vec{v},\pi]^T \in (\vec{H}^1_{\#,\ast}(\Omega_f) \cap \vec{V}) \times L^2(\Omega_f)$ satisfies \eqref{resolventd} $a.e.$, with $\vec{f}_4 \in \vec{V}$, we observe that 
\begin{equation}\label{vpiRegularity}
    \nu\Delta\vec{v} - \nabla \pi = \rho_f[\vec{v}-\vec{f}_4] \in \vec{V}.
\end{equation}

Utilizing \cite[Lemma 2]{avalos*} (see also \cite{temam}), and an integration by parts, we have that the pair $[\bv,\pi]^T$ has the additional regularity
\begin{equation}\label{vResReg}
\begin{split}
    \left.\pi\right|_{\partial \Omega_f} \in H^{-1/2}(\partial\Omega_f), \quad\quad  &\left.\frac{\partial \pi}{\partial\mathbf n}\right|_{\partial \Omega_f} \in H^{-3/2}(\partial\Omega_f),\\[.2cm]
    \nabla \pi \in [\mathbf H^1(\Omega_f)]',\quad\quad  & 2\nu\mathbf D(\bv)\mathbf n \in \mathbf H^{-1/2}(\partial \Omega_f),\\[.2cm]
   (\Delta \bv)\cdot \mathbf n \in H^{-3/2}(\partial \Omega_f), \quad\quad &\Delta\vec{v}  \in [H^{1}(\Omega_f)]'.
\end{split}
\end{equation}
Moreover, we can then read-off that 
\begin{equation}\label{vv} \cE_0(\bu)+\rho_b^{-1}\alpha \nabla p = -\bu+\mathbf f_2+\mathbf f_1 \in \mathbf L^2(\Omega_b) \end{equation} from the second equation in \eqref{resolvent1} and that 
\begin{equation}\label{vvv} A_0p = -p+\mathbf f_3 -c_0^{-1}\alpha \nabla \cdot \bu+{\alpha}c_0^{-1}\nabla \cdot \mathbf f_1 \in L^2(\Omega_b)\end{equation} from the third, since   $\vec{w} \in H^1(\Omega_b)$ and $p \in H^1_\#(\Omega_b)$, resp. In turn, we obtain 
$$\sigma_b(\bu)\mathbf n \in \mathbf H^{-1/2}(\partial \Omega_b),~~\partial_{\mathbf n} p \in H^{-1/2}(\partial \Omega_b).$$
Moreover, we have that 
\begin{equation}
\left[\partial_{\mathbf n}\bv-\pi\mathbf n\right] \in H^{-1/2}_{\#}(\partial \Omega_f).
\end{equation}
That is: If $\Gamma_a,~\Gamma_b \subset \Gamma_{\text{lat}}$ are opposing lateral sides of the $\Omega_f$ geometry, then we obtain immediately that
\begin{equation} 
\left[\partial_{\mathbf n}\bv-\pi\mathbf n\right]\Big|_{\Gamma_a} = \left[\partial_{\mathbf n}\bv-\pi\mathbf n\right]\Big|_{\Gamma_b}
\end{equation}
as elements of $H^{-1/2}\big((0,1)\times (-1,0)\big)$. 
Indeed, WLOG, say $$\Gamma_a \equiv \{0\} \times (0,1) \times (-1,0)~\text{ and }~ \Gamma_b \equiv \{1\} \times (0,1) \times (-1,0).$$ Then by the Sobolev Trace Theorem for Lipschitz domains (of course satisfied by the cubic geometry $\Omega_f$)---see, e.g., \cite[Thm. 3.3.8]{mclean}---given $\mathbf g \in H_0^{1/2+\epsilon}\big((0,1) \times (-1,0)\big)$, there is a $\boldsymbol\phi \in H^1(\Omega_f)$ so that 
\begin{equation}
\boldsymbol\phi  = \begin{cases} \mathbf g &~ \text{ on }~\Gamma_a \\
\mathbf g &~ \text{ on }~\Gamma_b \\
\mathbf 0 &~ \text{ on }~\partial \Omega_f \setminus (\Gamma_a \cup \Gamma_b),\end{cases}
\end{equation}
after using \eqref{BBprob} with the test function $[\mathbf 0, 0,  \boldsymbol \phi]^T \in \Sigma$ and the equation in \eqref{resolventd}. 

From here we note immediately that $\boldsymbol \phi \in \mathbf H^1_{\#}(\Omega_f)$, and $\boldsymbol \phi$ can be chosen so that $\nabla \cdot \boldsymbol \phi=0$ \cite{galdi}. Therewith, we observe
\begin{align}\nn
0 = &~\langle \partial_{\mathbf e_1}\bv, \mathbf g \rangle_{\Gamma_a}-\langle \partial_{\mathbf e_1}\bv, \mathbf g \rangle_{\Gamma_b}-\langle \pi \mathbf e_1, \mathbf g\rangle_{\Gamma_a}+\langle \pi \mathbf e_1,\mathbf g\rangle_{\Gamma_b} \\
= & ~(\nabla \bv, \nabla \mathbf \phi)_{\Omega_f}+(\Delta \bv, \mathbf \phi)_{\Omega_f}-(\nabla p, \mathbf \phi)_{\Omega_f}.
\end{align}
This establishes that 
\begin{equation}\label{work1} 2\nu \mathbf D(\bv)\mathbf n-\pi \mathbf n \in \mathbf H^{-1/2}_{\#}(\partial \Omega_f).\end{equation}
From \eqref{work1}, we will now establish that $\pi \in H^{-1/2}_{\#}(\partial \Omega_f)$ by demonstrating that $[2\nu\mathbf D(\bv)\mathbf n]\cdot\mathbf n \in H^{-1/2}_{\#}(\partial \Omega_f).$ 
With $\Gamma_a$ and $\Gamma_b$ as before, recalling that $\nabla \cdot \bv = 0$, we have that
$$\mathbf e_1 \cdot \mathbf D(\bv)\mathbf e_1=\partial_{1}v^1 = -\partial_{2}v^2-\partial_3v^3.$$
Then, for $f \in \mathscr D\big((0,1)\times (-1,0)\big)$, we have:
\begin{align*}
\big[(\mathbf D(\bv)\mathbf n)\cdot \mathbf n\big]\big|_{\Gamma_a} (f) = & ~ \int_{-1}^0\int_0^1\partial_1v^1(x_1=0)f dx_2dx_3 \\=& ~-\int_{-1}^0\int_0^1 [\partial_2v^2(x_1=0)+\partial_3v^3(x_1=0)]f dx_2dx_3 \\
=&~\int_{-1}^0\int_0^1 [v^2(x_1=0)f_2+v^3(x_1=0)f_3] dx_2dx_3 
\end{align*}
\begin{align*}
\phantom{\big[(\mathbf D(\bv)\mathbf n)\cdot \mathbf n\big]\big|_{\Gamma_a} (f)} =&~\int_{-1}^0\int_0^1 [v^2(x_1=1)f_2+v^3(x_1=1)f_3] dx_2dx_3 \\
=&~-\int_{-1}^0\int_0^1 [\partial_2v^2(x_1=1)f+\partial_3v^3(x_1=1)f] dx_2dx_3 \\
=&~\big[(\mathbf D(\bv)\mathbf n)\cdot \mathbf n\big]\big|_{\Gamma_b} (f)
\end{align*}
Then, by density, we have that \begin{equation} \label{ww}[2\nu\mathbf D(\bv)\mathbf n]\cdot\mathbf n \in H^{-1/2}_{\#}(\partial \Omega_f),~\text{ which yields from \eqref{work1} that }~ \pi \in H^{-1/2}_{\#}(\partial \Omega_f).\end{equation} 

The containments 
\begin{equation}\label{www}
\sigma_b(\bu)\mathbf n \in \mathbf H^{-1/2}_{\#}(\partial \Omega_b), ~\sigma^E(\bu)\mathbf n \in \mathbf H^{-1/2}_{\#}(\partial \Omega_b), ~\partial_{\mathbf n}p \in H^{-1/2}_{\#}(\partial \Omega_b)
\end{equation} 
follow in the same manner. 

The above regularity, along with $[\bv,\pi]^T \in \mathbf H^1_{\#,*}(\Omega_f) \times L^2(\Omega_f)$, justifies the generalized Green's theorem \cite{lionsmag}, permitting integration by parts for the term $\nu\Delta \bv-\nabla \pi \in \bV$. Similarly, the generalized Green's theorem for $A_0p \in L^2(\Omega_b)$ and $\cE_0\bu \in \mathbf L^2(\Omega_b)$ hold.  Thus we have the following identities for the variational solution $\{[\bu,p,\bv],\pi\} \in \Sigma \times L^2(\Omega_f)$:
\begin{align} \label{IBP}
(-\nu\Delta \bv+\nabla \pi, \tilde \bv)_{\mathbf L^2(\Omega_f)} = & ~2\nu(\mathbf D(\bv),\mathbf D(\tilde \bv))_{\mathbf L^2(\Omega_f)} -(\pi,\nabla \cdot \tilde \bv)_{L^2(\Omega_f)}-\langle \sigma_f(\bu,\pi)\mathbf n_{f}, \tilde \bv\rangle_{\Gamma_I} \nonumber \\
 &~\forall \tilde \bv \in \mathbf H^1_{\#,*}(\Omega_f);\\
(\sigma_b\cE_0\bu+\alpha \nabla p ,\tilde\bu)_{\mathbf L^2(\Omega_b)} = &~ a_E(\bu,\tilde \bu)-\alpha ( p, \nabla \cdot \tilde \bu)_{L^2(\Omega_b)}+\langle\sigma_b(\bu,p)\mathbf e_3, \tilde \bu \rangle_{\Gamma_I} \nonumber\\ 
 &~\forall ~ \tilde \bu \in \mathbf U;  \\
(c_0A_0p,\tilde p)_{L^2(\Omega_b)} = &~(k\nabla p, \nabla \tilde p)_{L^2(\Omega_b)}-\langle k\partial_{\mathbf n_b}p,\tilde p\rangle_{\Gamma_I} \nonumber \\
 &~ \tilde p \in H^1_{\#}(\Omega_b).\label{IBP-p}
\end{align}
Above, we have implicitly used the previously obtained \eqref{ww} and \eqref{www}.

The homogeneous boundary conditions for $\bv \in \mathbf H^1_{\#,*}(\Omega_f)$, $p \in H^1_{\#,*}(\Omega_b)$, and $\bu \in \mathbf U$ are encoded in these spaces. Moreover, the boundary conditions on $\Gamma_{\text{lat}}$ are observed directly: the periodic conditions of Dirichlet type are directly encoded in the $H^1_{\#}$ spaces; the Neumann conditions are {\em natural}, and obtained in the standard fashion.

We must now recover the interface conditions on $\Gamma_I$. Since $\phi = [\vec{u},p,\vec{v}]^T \in \Sigma$ satisfies \eqref{resolvent2}, we again invoke the generalized Green's identities, the relations: \eqref{resolventd}, \eqref{vv}, and \eqref{vvv}, as well as the now established boundary conditions away from the interface $\Gamma_I$. Recalling \eqref{BBprob}, we have that for every $\psi=[\bxi,q,\bzeta]^T \in \Sigma$
\begin{align*}
    &~\braket{\sigma_b\vec{n}_b,\bxi}_{\Gamma_I} + \braket{k\nabla p\cdot\vec{n}_b,q}_{\Gamma_I} + \braket{\sigma_f\vec{n}_f,\bzeta}_{\Gamma_I} +({p,\bzeta\cdot\vec{e}_3})_{\Gamma_I} - ({p,\bxi\cdot\vec{e}_3})_{\Gamma_I}\\
    &-( \mathbf v \cdot \mathbf e_3,q)_{\Gamma_I}+ ( {\vec{u}\cdot\vec{e}_3,q})_{\Gamma_I} + ({\beta(\vec{v}-\vec{u})\cdot\btau,\bzeta\cdot\btau})_{\Gamma_I} - ({\beta(\vec{v}-\vec{u})\cdot\btau,\bxi\cdot\btau})_{\Gamma_I}\\
    =&~ ({\vec{f}_1\cdot\vec{e}_3,q})_{\Gamma_I} - ({\vec{f}_1\cdot\btau,\beta(\bzeta-\bxi)\cdot\btau})_{\Gamma_I}.
\end{align*}
Above, the boundary integrals are interpreted (as in \eqref{IBP}) in the appropriately justified dual senses for the $\langle \cdot , \cdot \rangle$ terms.
From which we infer (identifying valid $L^2$ boundary integrals with the associated duality pairings):
\begin{align}\label{bdryVanish}
    \braket{-\sigma_b\vec{e}_3\cdot\vec{e}_3-p,\bxi\cdot\vec{e}_3}_{\Gamma_I} &+ \braket{-\sigma_b\vec{e}_3\cdot\btau - \beta[\vec{v} - (\vec{u}-\vec{f}_1)]\cdot\btau,\bxi\cdot\btau}_{\Gamma_I} \nn\\
    &+ \braket{-k\nabla p\cdot\vec{e}_3 - [\vec{v} - (\vec{u}-\vec{f}_1)]\cdot\vec{e}_3,q}_{\Gamma_I} + \braket{\sigma_f\vec{e}_3\cdot\vec{e}_3+p,\bzeta\cdot\vec{e}_3}_{\Gamma_I} \nn\\
    &+ \braket{-\sigma_f\vec{e}_3\cdot\btau - \beta[\vec{v} - (\vec{u}-\vec{f}_1)]\cdot\btau,\bzeta\cdot\btau}_{\Gamma_I} \nn\\
    =&~ 0.
\end{align}

Since \eqref{bdryVanish} holds for all $\psi \in \Sigma$, we can single out each term in \eqref{bdryVanish} by zeroing the test functions that do not appear in each subsequent pairing, considering purely normal or purely tangential $\bxi \in \vec{U}$ or $\bzeta \in \vec{H}^1_{\#,\ast}(\Omega_f)$, as needed. In particular, this gives us
\begin{align}\label{uno}
    \braket{-\sigma_b\vec{e}_3\cdot\vec{e}_3-p,\bxi\cdot\vec{e}_3}_{\Gamma_I} = 0 &~~~~~ \forall\, \bxi \in \{\vec{u} \in \vec{U}~:~\vec{u}\cdot\btau = 0 \text{ on } \Gamma_I\},\\\label{dos}
    \braket{-\sigma_b\vec{e}_3\cdot\btau - \beta[\vec{v} - (\vec{u}-\vec{f}_1)]\cdot\btau,\bxi\cdot\btau}_{\Gamma_I} = 0 &~~~~~ \forall\, \bxi \in \{\vec{u} \in \vec{U}~:~\vec{u}\cdot\vec{e}_3 = 0 \text{ on } \Gamma_I\},\\\label{tres}
    \braket{-k\nabla p \cdot \vec{e}_3 - [\vec{v} - (\vec{u} - \vec{f}_1)] \cdot\vec{e}_3,q}_{\Gamma_I} = 0 &~~~~~ \forall\, q \in H^1_\#(\Omega_b),\\\label{cuatro}
    \braket{\sigma_f\vec{e}_3\cdot\vec{e}_3+p,\bzeta\cdot\vec{e}_3}_{\Gamma_I} =0 &~~~~~ \forall\, \bzeta \in \{\vec{v} \in \vec{H}^1_{\#,\ast}(\Omega_f)~:~\vec{v}\cdot\btau = 0 \text{ on } \Gamma_I\},\\\label{cinco}
    \braket{-\sigma_f\vec{e}_3\cdot\btau - \beta[\vec{v} - (\vec{u}-\vec{f}_1)]\cdot\btau,\bzeta\cdot\btau}_{\Gamma_I} = 0 &~~~~~\forall\, \bzeta \in \{\vec{v} \in \vec{H}^1_{\#,\ast}(\Omega_f)~:~\vec{v}\cdot\vec{e}_3 = 0 \text{ on } \Gamma_I\}.
\end{align}

Subsequently, let $g \in H^{1/2}(\Gamma_I)$. Then, if we take $\bxi \equiv g \mathbf e_3$ in \eqref{uno}, we have that
\begin{equation} 
p=-\sigma_b\mathbf e_3\cdot \mathbf e_3 \in H^{-1/2}(\Gamma_I).
\end{equation}
after recalling \eqref{www}. In the same way, we obtain the following equations:
\begin{align}\label{ICrecovery1}
    \beta[\vec{v} - (\vec{u} - \vec{f}_1)]\cdot\btau = -\sigma_b \vec{e}_3\cdot\btau &\in H^{-1/2}(\Gamma_I),\\
    -k\nabla p\cdot\vec{e}_3 = [\vec{v} - (\vec{u} - \vec{f}_1)]\cdot\vec{e}_3 &\in H^{-1/2}(\Gamma_I),\\
    p = -\sigma_f\vec{e}_3\cdot\vec{e}_3 &\in H^{-1/2}(\Gamma_I),\\
    \beta[\vec{v} - (\vec{u} - \vec{f}_1)]\cdot\btau = -\sigma_f \vec{e}_3\cdot\btau &\in H^{-1/2}(\Gamma_I).
\end{align}
We moreover observe that $\sigma_b\vec{e}_3\cdot\vec{e}_3 = \sigma_f\vec{e}_3\cdot\vec{e}_3 \in H^{-1/2}(\Gamma_I)$ from  \eqref{uno} and \eqref{cuatro}, and that $\sigma_b\vec{e}_3\cdot\btau = \sigma_f\vec{e}_3\cdot\btau \in H^{-1/2}(\Gamma_I)$ from \eqref{dos} and \eqref{cinco}. Hence, we indeed guarantee the conservation of normal stresses between $\Omega_b$ and $\Omega_f$, and we may rewrite \eqref{ICrecovery1} by
\begin{equation}\label{ICrecovery2}
\begin{cases}
    [\vec{v} - (\vec{u} - \vec{f}_1)]\cdot\vec{e}_3 = -k\nabla p\cdot\vec{e}_3 &\in H^{-1/2}(\Gamma_I),\\
    \beta[\vec{v} - (\vec{u} - \vec{f}_1)]\cdot\btau = -\sigma_f \vec{e}_3\cdot\btau &\in H^{-1/2}(\Gamma_I),\\
    p = -\sigma_f\vec{e}_3\cdot\vec{e}_3 &\in H^{-1/2}(\Gamma_I),\\
    \sigma_b\vec{e}_3 = \sigma_f\vec{e}_3 &\in \vec{H}^{-1/2}(\Gamma_I).
\end{cases}
\end{equation}
Finally, since we have $\vec{w} = \vec{u} - \vec{f}_1 \in \vec{U}$, we may plug this in for the quantity $\bw=\vec{u} - \vec{f}_1$ (as in \eqref{resolvent1}), appearing in the first and second equations of \eqref{ICrecovery2} to yield the remaining conditions in $\mathcal{D}(\cA)$:
\begin{equation}\label{ICrecovery3}
\begin{cases}
    (\vec{v} - \vec{w})\cdot\vec{e}_3 = -k\nabla p\cdot\vec{e}_3 &\in H_{\#}^{-1/2}(\Gamma_I),\\
    \beta(\vec{v} - \vec{w})\cdot\btau = -\sigma_f \vec{e}_3\cdot\btau &\in H_{\#}^{-1/2}(\Gamma_I),\\
    p = -\sigma_f\vec{e}_3\cdot\vec{e}_3 &\in H_{\#}^{-1/2}(\Gamma_I),\\
    \sigma_b\vec{e}_3 = \sigma_f\vec{e}_3 &\in \vec{H}_{\#}^{-1/2}(\Gamma_I).
\end{cases}
\end{equation}

We now note that the given $p \in H^1_\#(\Omega_b)$, $\vec{v} \in \vec{H}^1_{\#,\ast}(\Omega_f)$, and $\pi \in L^2(\Omega_f)$ satisfy the relevant conditions to characterize $\pi$ via \eqref{pi}. Namely,  the divergence-free condition from $\bV$   \eqref{vpiRegularity} yields Laplace's equation for $\pi$; the needed trace regularities for well-posedness of \eqref{pi} are observed in \eqref{vResReg}; dotting $\nu_f\Delta \mathbf v+\nabla \pi \in \mathbf V$ and restricting to $\Gamma_I$ yields the second condition of \eqref{pi};  and the third boundary conditions of $\eqref{ICrecovery3}$ above provides the third boundary conditions for \eqref{pi}. Thus, the function $\pi,$ now written as $\pi\equiv  \Pi_1p + \Pi_2\vec{v} + \Pi_3\vec{v} \in L^2(\Omega_f)$ indeed satisfies the elliptic problem
\[
\begin{cases}
    \Delta \pi = 0 &\in L^2(\Omega_f),\\
    \partial_{\vec{e}_3}\pi = \nu\Delta\vec{v}\cdot\vec{e}_3 &\in H^{-3/2}(\Gamma_f),\\
    \pi = p + 2\nu\vec{e}_3\cdot\vec{D}(\vec{v})\vec{e}_3 &\in H^{-1/2}(\Gamma_I).
\end{cases}
\]
Therefore, the resolvent system \eqref{resolvent1} has the solution $[\vec{u},\vec{w},p,\vec{v}]^T \in \mathcal{D}(\cA)$, ultimately demonstrating that $(I-\cA): \mathcal D(\cA) \to X$ is surjective.

As we have recovered the domain for our weak solution, we may invoke elliptic regularity, bearing in mind the derivatives and invocations of the trace theorem. This provides the additional regularities:
\begin{itemize}
    \item  $\restri{-\btau\cdot\sigma_f\vec{e}_3}=\left.\beta(\vec{v}-\vec{w})\cdot\btau\right|_{\partial \Omega_f} \in H^{1/2}(\Gamma_I)$;~~the trace theorem applied to $\bv$ and $\bw$,
    
    \item $\restri{\partial_{\vec{e}_3}p}= \restri{(\vec{v} - \vec{w})\cdot\vec{e}_3}  \in H^{1/2}(\Gamma_I)$; ~~the trace theorem applied to $\bv$ and $\bw$,
    
    \item $p \in H_\#^2(\Omega_b)$; ~~invoking elliptic regularity for $p$ with $H^{1/2}$ Neumann trace from above,
    
    \item $\restri{-\vec{e}_3 \cdot \sigma_f \vec{e}_3} = \restri{p} \in H^{3/2}(\Gamma_I)$; ~~from the pressure coupling condition and the bullet above
    
    \item adding normal and tangential parts for the fluid normal stress, we have $$\restri{\sigma_f\vec{e}_3} = \restri{\sigma_b\vec{e}_3} \in \vec{H}^{1/2}(\Gamma_I).$$
\end{itemize}

\subsection{Concluding the Argument for Semigroup Generation}

As we have shown dissipativity (Section \ref{dissip}) and maximality (Section \ref{max1}) for the dynamics generator $\mathcal A$ as defined in Section \ref{gendef}, we can invoke the Lumer-Philips theorem \cite{pazy,kesavan} to obtain the generation of a $C_0$-semigroup on the underlying Hilbert space $X$. Theorem \ref{th:main1} is thus proved. 
\end{proof}

The statements about {\em strong solutions} in Corollary \ref{coro}---(Ia) and (II)---follow immediately from the standard theory of semigroups. Namely, when $\mathbf y_0 \in \mathcal D(\cA)$ (and $\mathcal F=[\mathbf 0, \mathbf F_b,S,\mathbf F_f]^T \in H^1(0,T;X)$ for the inhomogeneous problem) we observe that the function $\mathbf y(t) =e^{\mathcal A t}\mathbf y_0$ is the unique solution (point-wise in time) to the Cauchy problem
$$\dot{\mathbf y} = \mathcal A\mathbf y+\mathcal F$$ with $\mathbf y(0)=\mathbf y_0$; in addition, $e^{\mathcal A t}\mathbf y_0 \in C^1((0,T);X)\cap C([0,T];\mathcal D(\cA))$. From the definition of $\cA$ and $\mathcal D(\cA)$, the equations in \eqref{systemfull}--\eqref{systemfull1} hold point-wise in time, $a.e.$ in $\mathbf x$. Moreover, the boundary conditions in \eqref{uplow} and \eqref{IC1*}--\eqref{IC4*} hold point-wise in time, in the sense of the definition $\mathcal D(\cA)$ in Definition \ref{diffdomain}. For all $t \in (0,T)$, we then write:
\begin{align}
     \vec{u}_{tt} =&~ \rho_b^{-1}\vec{F}_b-\mathcal E_0\bu-\alpha\rho_b^{-1}\nabla p_b \in \mathbf L^2(\Omega_b),\\
    [p_b + \alpha c_0^{-1} \nabla \cdot \vec{u}]_t =&~ c_0^{-1}S-A_0p_b \in L^2(\Omega_b),\\
     \vec{v}_t  =&~ \rho_f^{-1}\vec{F}_f + \nu\rho_f^{-1} \Delta \vec{v} - \rho_f^{-1}\nabla p_f\in \mathbf L^2(\Omega_f),\quad \quad \nabla\cdot\vec{v} = 0\in L^2(\Omega_f), 
\end{align}
The vector $[\bu_t,p_b,\bv]^T \in C\big([0,T]; \mathbf H^1_{\#}(\Omega_b) \times H^1_{\#}(\Omega_b) \times \mathbf H^1_{\#}(\Omega_f)\big)$ can be used to test the above equations. Indeed, after multiplying (resp.) we integrate the first equation in $L^2(0,T;\mathbf L^2(\Omega_b))$, the second in $L^2(0,T;L^2(\Omega_b))$, and the last in $L^2(0,T;\mathbf L^2(\Omega_f))$. Spatial integration by parts on the RHS above is justified in $\mathcal D(\cA)$. Thus, the the formal calculations for the energy identity in Section \ref{ebal} are justified. Recalling
\begin{align*} e(t) \equiv &~ \frac{1}{2}\Big[\rho_b\|\vec{u}_t\|_{0,b}^2 + \|\vec{u}\|_E^2 + c_0\|p_b\|_{0,b}^2 + \rho_f\|\vec{v}\|_{0,f}^2\Big]\ \\ 
d_0^t \equiv &~ \int_0^t\big[k\|\nabla p_b\|_{0,b}^2 
+ 2\nu \|\vec{D}(\vec{v})\|_{0,f}^2 
+ \beta \|(\vec{v}-\vec{u}_t)\cdot\boldsymbol{\tau} \|_{\Gamma_I}^2 \big] d\tau,\end{align*}
we obtain, for strong solutions, the energy identity
\begin{equation}
e(t)+d_0^t=e(0)+\int_0^t\big[(\mathbf F_f,\bv)_{L^2(\Omega_f)}+(\mathbf F_b,\bu_t)_{\mathbf L^2(\Omega_b)}+( S, p_b)_{L^2(\Omega_b)}\big] d\tau.
\end{equation}
In anticipation of limit passage in the next section, we can relax the above inner products to the appropriate duality pairings:
\begin{equation}\label{onetouse}\small
e(t)+d_0^t=e(0)+\int_0^t\big[\langle\mathbf F_f,\bv\rangle_{(\vec{H}^1_{\#,\ast}(\Omega_f)\cap \vec{V})'\times (\vec{H}^1_{\#,\ast}(\Omega_f)\cap \vec{V})}+(\mathbf F_b,\bu_t)_{\mathbf L^2(\Omega_b)}+\langle S, p_b\rangle_{ [H^{1}_{\#,*}(\Omega_b)]'\times  H^{1}_{\#,*}(\Omega_b)}\big] d\tau.
\end{equation}

\section{Weak Solutions}\label{weaksolsec}
It remains to prove the statements  concerning weak solutions. Namely Corollary \ref{coro}, parts $(Ib)$ and $(III)$, which will hold for fixed $c_0>0$. Subsequently, we will show that there is a weak solution for the dynamics with $c_0$, taking the limit $c_0 \searrow 0$. That will constitute the central part of the proof of Theorem \ref{th:main2}.

\subsection{Finishing Proof of Corollary \ref{coro}}\label{sols1}
We prove $(III)$, which will subsume the proof of $(Ia)$ and finish the proof of Corollary \ref{coro}. Let $c_0 > 0$. Namely, we must construct a weak solution from heretofore obtained strong solutions. 

The first step is to observe that a strong solution (emanating from $\mathcal D(\cA)$) yields a weak solution. To that end, let $\mathcal F = [\mathbf 0, \mathbf F_b,S,\mathbf F_f]^T \in H^1(0,T;X)$ and take initial data $[\vec{u}_0,\vec{u}_1,p_0,\vec{v}_0]^T \in \mathcal{D}(\cA)$. From \eqref{coro} $(II)$, we have a strong solution $\mathbf y=[\vec{u},\vec{w},p,\vec{v}]^T \in C([0,T];\mathcal D(\mathcal A))\cap C^1((0,T);X)$ to the Cauchy problem \eqref{cauchy}.  It is then direct to show that $\vec{y}$ corresponds to  a weak solution, as in Definition \ref{weaksols}. Namely, take the $X$-inner product, given by \eqref{innerX}, of the full system \eqref{cauchy} with test function $[\bxi,\bxi_t,q,\bzeta]^T$, where $[\bxi,q,\bzeta]^T \in \Vtest$. This gives
\begin{align}%\label{weak1}
    \rho_b((\bw_t,\bxi))_{\Omega_b} +&~ \rho_b((\cE_0(\bu),\bxi))_{\Omega_b} + \rho_b((\alpha\rho_b^{-1}\nabla p,\bxi))_{\Omega_b} \nn\\
    &+~c_0((p_t,q))_{\Omega_b} + c_0(\alpha c_0^{-1}\nabla\cdot \vec{w}, q))_{\Omega_b}  + c_0((A_0p,q))_{\Omega_b} \nn\\
    &+~\rho_f((\vec{v}_t,\bzeta))_{\Omega_f} - \rho_f((\rho_f^{-1}G_1(p),\bzeta))_{\Omega_b} - \rho_f((\rho_f^{-1}[\nu\Delta + G_2 + G_3]\vec{v},\bzeta))_{\Omega_f} \nn\\
    &+~ ((\vec{u}_t,\bxi_t))_E -((\vec{w},\bxi_t))_E\\
    =&~((\mathbf F_b, \bzeta_t))_{\mathbf L^2(\Omega_b)}+((S,q))_{L^2(\Omega_b)}+((\mathbf F_f,\bzeta))_{\mathbf L^2(\Omega_f)}. \nn
\end{align}
From which, applying Green's Theorem and temporal integration by parts, gives
\begin{align*}
    \rho_b(\vec{w}, \bxi)\big|_{t = 0} -& \rho_b((\bw,\bxi_t))_{\Omega_b} + ((\sigma_b(\vec{u},p), \bxi))_{\Omega_b} - (\langle \sigma_b(\vec{u},p)\vec{n}_b, \bxi\rangle)_{\partial\Omega_b}\\
    &+~(c_0p + \alpha\nabla\cdot\vec{w},q)\big|_{t=0} - ((c_0p + \alpha\nabla\cdot\vec{w},q_t))_{\Omega_b} + ((k\nabla p, \nabla q))_{\Omega_b} - (\langle k\partial_{\vec{n}_b}p,q\rangle)_{\partial \Omega_b}\\
    &+~\rho_f(\vec{v},\bzeta)_{\Omega_f}\big|_{t=0} - \rho_f((\vec{v},\bzeta_t))_{\Omega_f} - ((\nu\Delta\vec{v},\bzeta))_{\Omega_f} + ((\nabla\pi(p_b,\vec{v}),\bzeta))_{\Omega_f}\\
    &+~ ((\vec{u}_t,\bxi_t))_E -((\vec{w},\bxi_t))_E\\
   =&((\mathbf F_b, \bzeta_t))_{\mathbf L^2(\Omega_b)}+((S,q))_{L^2(\Omega_b)}+((\mathbf F_f,\bzeta))_{\mathbf L^2(\Omega_f)}.
\end{align*}
Note that we may now substitute from the first equation of \eqref{cauchy}: $\vec{u}_t = \vec{w}$. So we obtain:
\begin{align*}
    \rho_b(\vec{u}_t, \bxi)\big|_{t = 0} -& \rho_b((\bu_t,\bxi_t))_{\Omega_b} + ((\sigma_b(\vec{u},p), \bxi))_{\Omega_b} - (\langle \sigma_b(\vec{u},p)\vec{n}_b, \bxi\rangle)_{\partial\Omega_b}\\
    &+~(c_0p + \alpha\nabla\cdot\vec{u}_t,q)\big|_{t=0} - ((c_0p + \alpha\nabla\cdot\vec{u}_t,q_t))_{\Omega_b} + ((k\nabla p, \nabla q))_{\Omega_b} - (\langle k\partial_{\vec{n}_b}p,q\rangle)_{\partial \Omega_b}\\
    &+~\rho_f(\vec{v},\bzeta)_{\Omega_f}\big|_{t=0} - \rho_f((\vec{v},\bzeta_t))_{\Omega_f} - ((\nu\Delta\vec{v},\bzeta))_{\Omega_f} + ((\nabla\pi(p_b,\vec{v}),\bzeta))_{\Omega_f}\\
   =&((\mathbf F_b, \bzeta_t))_{\mathbf L^2(\Omega_b)}+((S,q))_{L^2(\Omega_b)}+((\mathbf F_f,\bzeta))_{\mathbf L^2(\Omega_f)}. 
\end{align*}
Finally, realize the boundary conditions in $\mathcal{D}(\cA)$, then move the remaining terms to the RHS to obtain
\begin{align}\label{weak1}
    -&~ \rho_b((\vec{u}_t,\bxi_t))_{\Omega_b} + ((\sigma_b(\vec{u},p), \nabla \bxi))_{\Omega_b} - ((c_0 p + \alpha\nabla\cdot\vec{u}, \partial_t q))_{\Omega_b} + ((k\nabla p,\nabla q))_{\Omega_b} \nn\\
    &- \rho_f((\vec{v},\bzeta_t))_{\Omega_f} + 2\nu((\vec{D}(\vec{v}),\vec{D}(\bzeta)))_{\Omega_f} + (({p,(\bzeta-\bxi)\cdot\vec{e}_3}))_{\Gamma_I} - (({\vec{v}\cdot\vec{e}_3,q}))_{\Gamma_I} \nn\\
    &- (({\vec{u}\cdot\vec{e}_3,\partial_tq}))_{\Gamma_I} + \beta (({\vec{v}\cdot\btau,(\bzeta - \bxi)\cdot\btau}))_{\Gamma_I} + \beta(({\vec{u}\cdot\btau, (\bzeta_t - \bxi_t)\cdot\btau}))_{\Gamma_I} \nn\\
    =&~ \rho_b(\vec{u}_t,\bxi)_{\Omega_b}\big|_{t=0} + (c_0 p + \alpha\nabla\cdot\vec{u},q)_{\Omega_b}\big|_{t=0} + \rho_f(\vec{v},\bzeta)_{\Omega_f}\big|_{t=0} + ({\vec{u}\cdot\vec{e}_3,q})_{\Gamma_I}\big|_{t=0} \nn\\
    &~- \beta({\vec{u}\cdot\btau, (\bzeta - \bxi)\cdot\btau})_{\Gamma_I}\big|_{t=0} + ((\vec{F}_b,\bxi))_{\Omega_b} + (( S,q))_{\Omega_b} + (( \vec{F}_f,\bzeta))_{\Omega_f}
\end{align}
Thus, for the strong solution $\mathbf y =[\bu,\bu_t,p,\bv]^T$, we obtain that $[\vec{u},p,\vec{v}]$ satisfies Definition \ref{weaksols} and is thus a weak solution to \eqref{systemfull}--\eqref{endfull}. Moreover, as the solution $\mathbf y$ is strong, it satisfies the energy identity \eqref{onetouse}. Using Cauchy-Schwarz/dual estimates, we obtain immediately that {\small
\begin{align} \label{eident1} 
    \|\vec{u}_t(t)\|_{0,b}^2 +& \|\vec{u}(t)\|_E^2 + c_0\|p(t)\|_{0,b}^2 + \|\vec{v}(t)\|_{0,b}^2 \\
    &+ k\int_0^t \|\nabla p\|_{0,b}^2 d\tau + 2\nu\int_0^t\|\vec{D}(\vec{v})\|_{0,f}^2\tau + \int_0^t \beta\|(\vec{v} - \vec{u}_t)\cdot\btau\|_{\Gamma_I}d\tau\nn\\
    \lesssim&~ \|\bu_1\|_{0,b}^2 + \|\bu_0\|_E^2 + c_0 \|p_0\|_{0,b}^2 + \|\bv_0\|_{0,f}^2+\int_0^t[||\mathbf F_b||_{\vec{L}^2(\Omega_b)}^2+||\mathbf F_f||_{(\vec{H}^1_{\#,\ast}(\Omega_f)\cap \vec{V})'}^2+||S||^2_{[H^1_{\#,*}(\Omega_b)]'}]d\tau .\nn
\end{align}}
We note that we have relaxed the requirements on the source terms above, in preparation for limit passage in the next step. 
Similarly, for the arguments to follow, we read-off the regularities associated to the above estimate: \begin{itemize} \item  $\vec{u} \in L^\infty(0,T;\vec{U}) \cap W^{1,\infty}(0,T;\vec{L}^2(\Omega_b))$, 
\item $p \in L^2(0,T;H_\#^1(\Omega_b))$ with $\sqrt{c_0}p \in L^\infty(0,T;L^2(\Omega_b))$, \item and  $\vec{v} \in L^\infty(0,T;\vec{V}) \cap L^2(0,T; \vec{V} \cap \vec{H}_{\#,\ast}^1(\Omega_f))$.
\end{itemize}
From these estimates, we observe that $[\bu,p,\bv]^T \in \mathcal{V}_{\text{sol}}$. Thus, we have shown that a strong solution corresponding to $\mathbf y_0 \in \mathcal D(\cA)$ and $\mathcal F \in H^1(0,T;X)$ is a weak solution in the sense of Definition \ref{weaksols}. 

The next step is to show that for weaker data, that a weak solution can be constructed. This will be done by approximation, as is standard \cite{ball,pazy}. Let  $\mathbf y_0 \in X$ and $\mathbf F_f \in L^2\big(0,T;(\vec{H}^1_{\#,\ast}(\Omega_f)\cap \vec{V})'\big)$, $\mathbf F_b \in L^2(0,T;\vec{L}^2(\Omega_b))$, and $S \in L^2(0,T; [H^{1}_{\#,*}(\Omega_b)]')$. 
Since  $\mathcal A$ generates on the Hilbert space $X$, $\mathcal{D}(\cA)$ is dense in $X$ \cite{pazy}. Thus there is a sequence of data $\vec{y}_0^n \in \mathcal{D}(\cA)$ such that $\vec{y}_0^n \to \vec{y}_0$. In addition, we can consider a sequence $\mathcal F^n = [\mathbf 0, \mathbf F^n_b,S^n,\mathbf F^n_f] \in C^{\infty}\left([0,T]; \mathbf 0 \times \mathbf L^2(\Omega_b) \times L^2(\Omega_b) \times \mathbf L^2(\Omega_f)\right) \subset H^1(0,T;X)$ so that 
$$\small \mathcal F^n \to \mathcal F ~\text{ in }~\{\mathbf 0\} \times L^2\big(0,T;(\vec{H}^1_{\#,\ast}(\Omega_f)\cap \vec{V})'\big) \times  L^2(0,T;\vec{L}^2(\Omega_b)) \times S \in L^2(0,T; [H^{1}_{\#,*}(\Omega_b)]').$$
Then, for each $(\mathbf y^n_0,\mathcal F^n)$ there a weak solution $\vec{y}^n \in \mathcal{V}_{\text{sol}}$ which satisfies the weak form
\begin{align}\label{weakn}
    -&~ \rho_b((\vec{u}_t^n,\bxi_t))_{\Omega_b} + ((\sigma_b(\vec{u}^n,p^n), \nabla \bxi))_{\Omega_b} - ((c_0 p^n + \alpha\nabla\cdot\vec{u}^n, \partial_t q))_{\Omega_b} + ((k\nabla p^n,\nabla q))_{\Omega_b} \nn\\
    &- \rho_f((\vec{v}^n,\bzeta_t))_{\Omega_f} + 2\nu((\vec{D}(\vec{v}^n),\vec{D}(\bzeta)))_{\Omega_f} + (({p^n,(\bzeta-\bxi)\cdot\vec{e}_3}))_{\Gamma_I} - (({\vec{v}^n\cdot\vec{e}_3,q}))_{\Gamma_I} \nn\\
    &- (({\vec{u}^n\cdot\vec{e}_3,\partial_tq}))_{\Gamma_I} + \beta (({\vec{v}^n\cdot\btau,(\bzeta - \bxi)\cdot\btau}))_{\Gamma_I} + \beta(({\vec{u}^n\cdot\btau, (\bzeta_t - \bxi_t)\cdot\btau}))_{\Gamma_I} \nn\\
    =&~ \rho_b(\bu_1^n,\bxi)_{\Omega_b}\big|_{t=0} + (c_0 p_0^n + \alpha\nabla\cdot \bu_0^n,q)_{\Omega_b}\big|_{t=0} + \rho_f(\bv_0^n,\bzeta)_{\Omega_f}\big|_{t=0} + ({\bu_0^n\cdot\vec{e}_3,q})_{\Gamma_I}\big|_{t=0} \nn\\
    &~- \beta({\bu_0^n\cdot\btau, (\bzeta - \bxi)\cdot\btau})_{\Gamma_I}\big|_{t=0}+ ((\vec{F}^n_b,\bxi))_{\Omega_b} + (( S^n,q))_{\Omega_b} + (( \vec{F}^n_f,\bzeta))_{\Omega_f}
\end{align}
for all $[\bxi,q,\bzeta]^T \in \Vtest$, and energy identity 
\begin{align}\label{eidentn}
    \|\vec{u}_t^n(t)\|_{0,b}^2 +& \|\vec{u}^n(t)\|_E^2 + c_0\|p^n(t)\|_{0,b}^2 + \|\vec{v}^n(t)\|_{0,f}^2 \nn\\
    & + k\int_0^t \|\nabla p^n\|_{0,b}^2 d\tau + 2\nu\int_0^t\|\vec{D}(\vec{v}^n)\|_{0,f}^2d\tau + \int_0^t \beta\|(\vec{v}^n - \vec{u}^n_t)\cdot\btau\|_{\Gamma_I}^2d\tau \nn\\
    \lesssim &~ \|\bu_1^n\|_{0,b}^2 + \|\bu_0^n\|_E^2 + c_0 \|p_0^n\|_{0,b}^2 + \|\bv_0^n\|_{0,f}^2 \\ \nn &+\int_0^t[||\mathbf F^n_b||_{\vec{L}^2(\Omega_b)}^2+||\mathbf F^n_f||_{(\vec{H}^1_{\#,\ast}(\Omega_f)\cap \vec{V})'}^2+||S^n||^2_{[H^1_{\#,*}(\Omega_b)]'}]d\tau,
\end{align}

Now, we invoke the convergence $\mathbf y_0^n \to \mathbf y_0 \in X$ (topologically equivalent to $\mathbf H^1_{\#}(\Omega_b) \times \mathbf L^2(\Omega_b) \times L^2(\Omega_b)\times \mathbf L^2(\Omega_f)$ here) and $$\small \mathcal F^n \to \mathcal F ~\text{ in }~\{\mathbf 0\} \times L^2\big(0,T;(\vec{H}^1_{\#,\ast}(\Omega_f)\cap \vec{V})'\big) \times \mathbf F_b \in L^2(0,T;\vec{L}^2(\Omega_b)) \times S \in L^2(0,T; [H^{1}_{\#,*}(\Omega_b)]').$$
This yields the uniform-in-$n$ estimate 
\begin{align}\label{eidentn*}
    \|\vec{u}_t^n(t)\|_{0,b}^2 +& \|\vec{u}^n(t)\|_E^2 + c_0\|p^n(t)\|_{0,b}^2 + \|\vec{v}^n(t)\|_{0,f}^2 \nn\\
    & + k\int_0^t \|\nabla p^n\|_{0,b}^2 d\tau + 2\nu\int_0^t\|\vec{D}(\vec{v}^n)\|_{0,f}^2d\tau + \int_0^t \beta\|(\vec{v}^n - \vec{u}^n_t)\cdot\btau\|_{\Gamma_I}^2d\tau \nn\\
    \lesssim &~ \|\bu_1\|_{0,b}^2 + \|\bu_0\|_E^2 + c_0 \|p_0\|_{0,b}^2 + \|\bv_0\|_{0,f}^2 \\ \nn &+\int_0^t[||\mathbf F_b||_{\vec{L}^2(\Omega_b)}^2+||\mathbf F_f||_{(\vec{H}^1_{\#,\ast}(\Omega_f)\cap \vec{V})'}^2+||S||^2_{[H^1_{\#,*}(\Omega_b)]'}]d\tau.
\end{align}
By Banach-Alaoglu, for the sequence $\{\bu^n,\bu^n_t,p^n,\bv^n\}$, we thus have the following weak-$*$ (subsequential) limits and limit points, upon relabeling:
\begin{align*}
\bu^n \rightharpoonup^* &~\overline{\bu} \in L^{\infty}(0,T;\mathbf U)\\
\bu_t^n \rightharpoonup^* &~\overline{\bw} \in L^{\infty}(0,T;\mathbf L^2(\Omega_b))\\
\bv^n \rightharpoonup^* &~\overline{\bv} \in L^{\infty}(0,T;\mathbf L^2(\Omega_f))\\
\sqrt{c_0}p^n  \rightharpoonup^* &~\sqrt{c_0}\overline{p} \in L^{\infty}(0,T;L^2(\Omega_b)).
\end{align*}
From the dissipator, we also have the following weak limits and limit points, which are identified as above (by uniqueness of limits):
\begin{align*}
\bv^n \rightharpoonup &~\overline{\bv} \in L^{2}(0,T;\mathbf  H^1_{\#,*}(\Omega_f))\\
p^n  \rightharpoonup &~\overline{p} \in L^{2}(0,T;H^1_{\#,*}(\Omega_b))\\
(\bv^n-\bu_t^n)\cdot \btau \rightharpoonup &~ \overline{\mathbf h}\cdot \btau \in L^2(0,T;L^2(\Gamma_I)).
\end{align*}
Identifying $[\overline{\bu}]_t$ and $\overline{\bw}$ is done in the standard way for the (temporal) distributional derivative (e.g., \cite[Remark 2.1.1]{kesavan}). Moreover, by continuity of the trace mapping $\gamma_0: \mathbf H_{\#}^1(\Omega_f) \to \mathbf H^{1/2}(\partial \Omega_f)$, we have immediately that $\gamma_0[\bv^n] \rightharpoonup \gamma_0[\overline{\bv}] \in H^{1/2}(\partial \Omega_f)$, and, in particular
$$\bv^n\cdot \btau \rightharpoonup \overline{\bv}\cdot \btau \in L^2(0,T;L^2(\Gamma_I)),~~\text{ for }~~\btau = \mathbf e_1, \mathbf e_2.$$ From which we can give meaning to the weak limit for $\bu_t^n \cdot \btau \in L^2(0,T;L^2(\Gamma_I)),~\btau = \mathbf e_1, \mathbf e_2$ via
$$\bu^n_t\cdot \mathbf e_i = \bv^n\cdot \mathbf e_i-\overline{\mathbf h}\cdot \mathbf e_i \rightharpoonup [\bv-\overline{\mathbf h}]\cdot \mathbf e_i.$$
Again, since approximants $\bu^n_t$ have well-defined traces on $\Gamma_I$, and the quantities $[\mathbf v- \overline{\mathbf h}]\cdot \btau$ are well defined as weak limits, we can make the identification $\overline \bu_t \cdot \btau = [\bv-\overline{\mathbf h}]\cdot \btau,$ for $\btau = \mathbf e_1,\mathbf e_2.$

Now, since $p^n  \rightharpoonup ~\overline{p} \in L^{2}(0,T;H^1_{\#,*}(\Omega_b))$ and $\bu^n \rightharpoonup^* ~\overline{\bu} \in L^{\infty}(0,T;\mathbf U)$, we can infer that 
\begin{equation}\label{weake1}\sigma_b(\bu^n,p^n) \rightharpoonup \sigma_b(\overline{\bu},\overline p) \in  L^2(0,T; [L^2(\Omega_b)]^{3\times 3}),\end{equation} noting the embedding $L^2(0,T;Z) \hookrightarrow L^1(0,T;Z)$. 
From $\bv^n \rightharpoonup ~\overline{\bv} \in L^{2}(0,T;\mathbf \mathbf H^1_{\#,*}(\Omega_f))$, it is immediate that
\begin{equation}\label{weake2}\mathbf D(\bv^n) \rightharpoonup \mathbf D(\overline{\bv}) \in L^2(0,T;[L^2(\Omega_f)]^{3\times 3}).\end{equation}

We now consider the terms in \eqref{weakn} individually, and justify their convergence. We recall that
   $$ [\bxi,q,\bzeta]^T \in \mathcal{V}_\text{test} = C^1_0([0,T); \vec{U} \times  H^1_{\#,*}(\Omega_b) \times (\vec{H}^1_{\#,\ast}(\Omega_f)\cap \vec{V})).$$
In particular, the functions in $\mathcal V_{\text{test}}$ are $C^1$-smooth in-time into $H^1$-type spaces, and can be considered as test functions in each of the weak and weak-$*$ limits above. Thus, for the interior terms in \eqref{weakn} as $n\to\infty$, we have
\begin{align*}
    ~ \rho_b((\vec{u}_t^n,\bxi_t))_{\Omega_b}  \to&~ \rho_b((\vec{u}_t,\bxi_t))_{\Omega_b}  \\ 
    \text{since }& \text{ $\bxi_t \in L^1(0,T;\mathbf L^2(\Omega_b))$} ~~ \text{and $\bu^n_t\rightharpoonup^* \overline{\bu}_t \in L^{\infty}(0,T;\mathbf L^2(\Omega_b))$}  \\[.2cm]
     ((\sigma_b(\vec{u}^n,p^n), \nabla \bxi))_{\Omega_b} \to&~ ((\sigma_b(\overline{\vec{u}},\overline p), \nabla \bxi))_{\Omega_b} \\ 
       \text{since} &  \text{  $\nabla \bxi \in L^2(0,T; [L^2(\Omega_b))]^{3\times 3})$} ~~ \text{and \eqref{weake1}}  
       \end{align*}
       \begin{align*}
     ((c_0 p^n + \alpha\nabla\cdot\vec{u}^n, \partial_t q))_{\Omega_b} \to &~    ((c_0 \overline p + \alpha\nabla\cdot \overline{\vec{u}}, \partial_t q))_{\Omega_b}\\ 
       \text{since} &  \text{  $q_t \in L^1(0,T; L^2(\Omega_b))$} ~~ \text{and $c_0p^n+\nabla\cdot \bu^n\rightharpoonup^* c_0\overline p +\nabla \cdot \overline{\bu} \in L^{\infty}(0,T; L^2(\Omega_b))$}  \\[.2cm]
     ((k\nabla p^n,\nabla q))_{\Omega_b} \to & ~ ((k\nabla \overline p, \nabla q))_{\Omega_b} \\
     \text{since} &  \text{  $\nabla q \in L^2(0,T; \mathbf L^2(\Omega_b))$} ~~ \text{and $\nabla p \rightharpoonup \nabla \overline p \in L^{2}(0,T; \mathbf L^2(\Omega_b))$}  \\[.2cm]
     \rho_f((\vec{v}^n,\bzeta_t))_{\Omega_f} \to &~  \rho_f((\overline{\vec{v}},\bzeta_t))_{\Omega_f} \\ 
            \text{since} & \text{ $\bzeta_t \in L^1(0,T; \mathbf L^2(\Omega_b)))$ and $\bv^n \rightharpoonup^* \overline{\bv} \in L^{\infty}(0,T;\mathbf L^2(\Omega_f))$} \\[.2cm]
     2\nu((\vec{D}(\vec{v}^n),\vec{D}(\bzeta)))_{\Omega_f}\to&~2\nu((\vec{D}(\overline{\vec{v}}),\vec{D}(\bzeta)))_{\Omega_f} \\
            \text{since} & \text{ $\mathbf D(\bzeta) \in L^2(0,T; [L^2(\Omega_b))]^{3\times 3})$} ~~ \text{and \eqref{weake2}} 
   \end{align*}
   Now we consider the trace terms. We bear in mind the standard trace theorem on $H^1$-type spaces, i.e., $\mathbf {H}_\#^1(\Omega_i) \xrightarrow{\gamma_0} \vec{H}^{1/2}(\partial \Omega_i) \hookrightarrow \vec{L}^2(\partial \Omega_i)$,  but we suppress the trace operators $\gamma_0[\cdot]$ themselves; we then have
   \begin{align*}
       ~  (({p^n,(\bzeta-\bxi)\cdot\vec{e}_3}))_{\Gamma_I} \to&~  (({p^n,(\bzeta-\bxi)\cdot\vec{e}_3}))_{\Gamma_I}  \\ 
    \text{since }& \text{ $(\bzeta-\bxi)  \in L^2(0,T;\mathbf L^2(\Gamma_I))$} ~~ \text{and $p^n \rightharpoonup \overline{p} \in L^{2}(0,T;\mathbf L^2(\Gamma_I))$}  \\[.2cm]
     (({\vec{v}^n\cdot\vec{e}_3,q}))_{\Gamma_I} \to &~  ((\overline{\vec{v}}\cdot\vec{e}_3,q))_{\Gamma_I} \\
       \text{since }& \text{ $q \in L^2(0,T; L^2(\Gamma_i))$} ~~ \text{and $\bv^n\rightharpoonup \overline{\bv} \in L^{2}(0,T;\mathbf L^2(\Gamma_I))$}  \\[.2cm]
     (({\vec{u}^n\cdot\vec{e}_3,\partial_tq}))_{\Gamma_I} \\
       \text{since }& \text{ $q_t \in L^1(0,T; L^2(\Gamma_I))$} ~~ \text{and $\bu^n\rightharpoonup^* \overline{\bu} \in L^{\infty}(0,T;\mathbf L^2(\Gamma_I))$} 
       \end{align*}
       \begin{align*}
     \beta (({\vec{v}^n\cdot\btau,(\bzeta - \bxi)\cdot\btau}))_{\Gamma_I} \\
      \text{since }& \text{ $(\bzeta-\bxi)  \in L^2(0,T;\mathbf L^2(\Gamma_I))$} ~~ \text{and  $\bv^n\rightharpoonup \overline{\bv} \in L^{2}(0,T;\mathbf L^2(\Gamma_I))$ }  \\[.2cm]
     \beta(({\vec{u}^n\cdot\btau, (\bzeta_t - \bxi_t)\cdot\btau}))_{\Gamma_I} \\
    \text{since }& \text{ $(\bzeta-\bxi)_t  \in L^1(0,T;\mathbf L^2(\Gamma_I))$} ~~~~ \text{and $\bu^n\rightharpoonup^* \overline{\bu} \in L^{\infty}(0,T;\mathbf L^2(\Gamma_I))$}  
   \end{align*}
   Finally, from the assumptions about the strong convergence of the data in the appropriate senses $\mathbf y^n_0\to \mathbf y_0$ and $\mathcal F^n \to \mathcal F$ (as above), we observe weak/weak-$*$ convergence as appropriate (as we did in going from \eqref{eidentn} to \eqref{eidentn*}), yielding 
\begin{align*}
&~ \rho_b(\bu_1^n,\bxi)_{\Omega_b}\big|_{t=0} + (c_0 p_0^n + \alpha\nabla\cdot \bu_0^n,q)_{\Omega_b}\big|_{t=0} + \rho_f(\bv_0^n,\bzeta)_{\Omega_f}\big|_{t=0} + ({\bu_0^n\cdot\vec{e}_3,q})_{\Gamma_I}\big|_{t=0} \nn\\
    &~- \beta({\bu_0^n\cdot\btau, (\bzeta - \bxi)\cdot\btau})_{\Gamma_I}\big|_{t=0}+ ((\vec{F}^n_b,\bxi))_{\Omega_b} + (( S^n,q))_{\Omega_b} + (( \vec{F}^n_f,\bzeta))_{\Omega_f}\\
    \to\\
    &~ \rho_b(\bu_1,\bxi)_{\Omega_b}\big|_{t=0} + (c_0 p_0 + \alpha\nabla\cdot \bu_0,q)_{\Omega_b}\big|_{t=0} + \rho_f(\bv_0,\bzeta)_{\Omega_f}\big|_{t=0} + ({\bu_0\cdot\vec{e}_3,q})_{\Gamma_I}\big|_{t=0} \nn\\
    &~- \beta({\bu_0\cdot\btau, (\bzeta - \bxi)\cdot\btau})_{\Gamma_I}\big|_{t=0}+ (\langle \vec{F}_b,\bxi\rangle)_{\Omega_b} + (\langle S,q\rangle )_{\Omega_b} + (\langle \vec{F}_f,\bzeta\rangle)_{\Omega_f}
\end{align*}

Thus, we may pass to the limit as $n \to \infty$ in \eqref{weakn} to obtain for all $[\bxi,q,\bzeta]^T \in \mathcal V_{\text{test}}$:
\begin{align}\label{weaknnn}
    -&~ \rho_b((\overline{\vec{u}}_t ,\bxi_t))_{\Omega_b} + ((\sigma_b(\overline{\vec{u}} ,\overline p ), \nabla \bxi))_{\Omega_b} - ((c_0 \overline p  + \alpha\nabla\cdot\overline{\vec{u}} , \partial_t q))_{\Omega_b} + ((k\nabla \overline p ,\nabla q))_{\Omega_b} \nn\\
    &- \rho_f((\overline{\bv} ,\bzeta_t))_{\Omega_f} + 2\nu((\vec{D}(\overline{\vec{v}} ),\vec{D}(\bzeta)))_{\Omega_f} + (({\overline p ,(\bzeta-\bxi)\cdot\vec{e}_3}))_{\Gamma_I} - (({\overline{\vec{v}} \cdot\vec{e}_3,q}))_{\Gamma_I} \nn\\
    &- ((\overline{\vec{u}} \cdot\vec{e}_3,\partial_tq))_{\Gamma_I} + \beta (({\overline{\vec{v}} \cdot\btau,(\bzeta - \bxi)\cdot\btau}))_{\Gamma_I} + \beta(({\overline{\vec{u}} \cdot\btau, (\bzeta_t - \bxi_t)\cdot\btau}))_{\Gamma_I} \nn\\
    =&~ \rho_b(\bu_1 ,\bxi)_{\Omega_b}\big|_{t=0} + (c_0 p_0  + \alpha\nabla\cdot \bu_0 ,q)_{\Omega_b}\big|_{t=0} + \rho_f(\bv_0 ,\bzeta)_{\Omega_f}\big|_{t=0} + ({\bu_0 \cdot\vec{e}_3,q})_{\Gamma_I}\big|_{t=0} \nn\\
    &~- \beta({\bu_0 \cdot\btau, (\bzeta - \bxi)\cdot\btau})_{\Gamma_I}\big|_{t=0}+ (( \vec{F} _b,\bxi))_{\Omega_b} + (\langle S ,q\rangle )_{\Omega_b} + (\langle \vec{F} _f,\bzeta\rangle )_{\Omega_f}
\end{align}
 Observing our weak limit points (and subsequent identifications), and invoking weak-lower-semicontinuity of the norm, we obtain the estimate
\begin{align}\label{eidentn**}
    \|\overline{\vec{u}}_t(t)\|_{0,b}^2 +& \|\overline{\vec{u}}(t)\|_E^2 + c_0\|\overline p(t)\|_{0,b}^2 + \|\overline{\vec{v}}(t)\|_{0,f}^2 \nn\\
    & + k\int_0^t \|\nabla \overline p\|_{0,b}^2 d\tau + 2\nu\int_0^t\|\vec{D}(\overline{\vec{v}})\|_{0,f}^2d\tau + \int_0^t \beta\|(\overline{\vec{v}} - \overline{\vec{u}}_t)\cdot\btau\|_{\Gamma_I}^2d\tau \nn\\
    \lesssim &~ \|\bu_1\|_{0,b}^2 + \|\bu_0\|_E^2 + c_0 \|p_0\|_{0,b}^2 + \|\bv_0\|_{0,f}^2 \\ \nn &+\int_0^t[||\mathbf F_b||_{\vec{L}^2(\Omega_b)}^2+||\mathbf F_f||_{(\vec{H}^1_{\#,\ast}(\Omega_f)\cap \vec{V})'}^2+||S||^2_{[H^1_{\#,*}(\Omega_b)]'}]d\tau.
\end{align}
The above demonstrates that our element $[\overline{\bu}, \overline q, \overline{\bv}]^T \in \mathcal V_{\text{sol}}$ is indeed a weak solution of  \eqref{systemfull}--\eqref{endfull}, as per Definition \ref{weaksols}. 

Finally, we observe the trace regularity for $\overline{\bu}_t$, as quoted in Theorem \ref{th:main2}. Namely, our weak solution clearly has that $\gamma_0[(\mathbf v-\mathbf u_t) \cdot \btau] \in L^2(0,T; L^2(\Gamma_I))$ from the final estimate in \eqref{eidentn**}. 
We have thus completed the proof of Corollary \ref{coro} $(III)$. This immediately yields $(I)$, and, further we have obtained the result Theorem \ref{th:main2} {\em in the case when $c_0>0$}. The final thing that remains to be shown is the construction of weak solutions when $c_0=0$.

\subsection{Additional Regularity of Time Derivatives} \label{duals}

As we have now {\em constructed} a particular weak solution, we may observe that that weak solution $[\overline{\bu}, \overline q, \overline{\bv}]^T \in \mathcal V_{\text{sol}}$ in fact satisfies several other identities. Then, we aim to interpret certain time derivatives distributionally, yielding  additional regularity in certain dual senses. 

First, taking $\bzeta=\mathbf 0$ and $q=0$:
\begin{align*}
    -&~ \rho_b((\overline{\vec{u}}_t ,\bxi_t))_{\Omega_b} + ((\sigma_b(\overline{\vec{u}} ,\overline p ), \nabla \bxi))_{\Omega_b} 
  - (({\overline p ,\bxi\cdot\vec{e}_3}))_{\Gamma_I} 
   + \beta (([\overline{\bu}_t-{\overline{\vec{v}}] \cdot\btau,\bxi\cdot\btau}))_{\Gamma_I}\nn\\
    =&~ \rho_b(\bu_1 ,\bxi)_{\Omega_b}\big|_{t=0}  + (( \vec{F} _b,\bxi))_{\Omega_b} 
\end{align*}
Rewriting, and noting that $\bxi \in C_0^1([0,T);\mathbf U)$, we have
\begin{align*}
    -~ \rho_b((\overline{\vec{u}}_t ,\bxi_t))_{\Omega_b} =&~- ((\sigma_b(\overline{\vec{u}} ,\overline p ), \nabla \bxi))_{\Omega_b} 
  + (({\overline p ,\bxi\cdot\vec{e}_3}))_{\Gamma_I} - \beta (([\overline{\bu}_t-{\overline{\vec{v}}] \cdot\btau,\bxi\cdot\btau}))_{\Gamma_I} \\ &~+ \rho_b(\bu_1 ,\bxi)_{\Omega_b}\big|_{t=0}  + (( \vec{F} _b,\bxi))_{\Omega_b} 
\end{align*}
Finally, restricting to $\bxi \in C_0^{\infty}(0,T; \mathbf U)$, we obtain the estimate (bounding all RHS terms above via Cauchy-Schwarz, the trace theorem, and Poincar\'e's inequality):
\begin{equation*}\small
\left|((\overline{\bu}_t,\bxi_t))_{\Omega_b}\right| \lesssim C\left(||\overline{\bu}||_{L^2(0,T;\mathbf U)}+||\overline p||_{L^2(0,T;\mathbf H^1_{\#,*}(\Omega_b)}+||\overline{\bu}_t-\overline{\bv}||_{L^2(0,T;L^2(\Gamma_I))}+||\bF_b||_{L^2(0,T;\mathbf L^2(\Omega_b))}\right)||\bxi||_{L^2(0,T;\mathbf U)}
\end{equation*}
From which we infer that, indeed, for our constructed weak solution $[\overline \bu, \overline p, \overline{\bv}] \in \mathcal V_{\text{sol}}$, we have  
\begin{equation}
\overline{\bu}_{tt} \in L^2(0,T; \mathbf U'),
\end{equation}
with associated bound.

Now, we repeat the same procedure, taking $\bxi=\mathbf 0$ and $q=0$:
\begin{align*}
    - \rho_f((\overline{\bv} ,\bzeta_t))_{\Omega_f} +&~ 2\nu((\vec{D}(\overline{\vec{v}} ),\vec{D}(\bzeta)))_{\Omega_f} + (({\overline p ,\bzeta\cdot\vec{e}_3}))_{\Gamma_I}+ \beta (({[\overline{\vec{v}}-\overline{\bu}_t] \cdot\btau,\bzeta \cdot\btau}))_{\Gamma_I}  \nn\\
    =&~  \rho_f(\bv_0 ,\bzeta)_{\Omega_f}\big|_{t=0} + (\langle \vec{F} _f,\bzeta\rangle )_{\Omega_f}
\end{align*}
Estimating the term $ \rho_f((\overline{\bv} ,\bzeta_t))_{\Omega_f}$ as before, we obtain
\begin{equation}
\overline{\bv}_t \in L^2(0,T; [\mathbf H^1_{\#,*}(\Omega_f) \cap \bV]').
\end{equation}

The final step addresses the regularity of $[c_0\overline p +\nabla \cdot \overline{\bv}]_t$ and $\gamma_0[\overline{\bu}_t\cdot \mathbf e_3]$.
We take $\bzeta=\bxi=\mathbf 0$:
\begin{align*}
   - ((c_0 \overline p  + \alpha\nabla\cdot\overline{\vec{u}} , \partial_t q))_{\Omega_b} &+ ((k\nabla \overline p ,\nabla q))_{\Omega_b} 
   - (({\overline{\vec{v}} \cdot\vec{e}_3,q}))_{\Gamma_I} 
    - ((\overline{\vec{u}} \cdot\vec{e}_3,\partial_tq))_{\Gamma_I}  \nn\\
    =&~~   (c_0 p_0  + \alpha\nabla\cdot \bu_0 ,q)_{\Omega_b}\big|_{t=0}  + ({\bu_0 \cdot\vec{e}_3,q})_{\Gamma_I}\big|_{t=0} 
   + (\langle S ,q\rangle )_{\Omega_b} 
\end{align*}    
  We recall that $q \in C_0^1([0,T); H^1_{\#,*}(\Omega_b))$ above. Restricting to $q \in C_0^{\infty}((0,T); C^{\infty}(\Omega_b))$, we obtain 
  \begin{align*}
   - ((c_0 \overline p  + \alpha\nabla\cdot\overline{\vec{u}} , \partial_t q))_{\Omega_b} 
    - ((\overline{\vec{u}} \cdot\vec{e}_3,\partial_tq))_{\Gamma_I}  
    =    
    (({\overline{\vec{v}} \cdot\vec{e}_3,q}))_{\Gamma_I}  - ((k\nabla \overline p ,\nabla q))_{\Omega_b} 
   +(\langle S ,q\rangle )_{\Omega_b} .
\end{align*}     
While the RHS above can be estimated in terms of given regularities for test functions and solutions, the LHS shows that we cannot decoupled the fluid content time derivative $[c_0\overline p+\nabla \cdot \overline{\bu}]_t$ and the trace $ \overline{\mathbf u}_t\cdot \mathbf e_3\big|_{\Gamma_I}$ in the sense of our weak formulation.
  
  However, further restricting to $q \in C_0^{\infty}((0,T)\times \Omega_b)$, we do obtain the
  estimate
  \begin{equation}\small
  \big|((c_0 \overline p  + \alpha\nabla\cdot\overline{\vec{u}} , \partial_t q))_{\Omega_b}\big| \lesssim ||\nabla \overline p||_{L^2(0,T;\mathbf L^2(\Omega_b))}||\nabla q||_{L^2(0,T;\mathbf L^2(\Omega_b))}
  +||S||_{L^2(0,T; H^{-1}(\Omega_b))}||q||_{L^2(0,T; H^1_{0}(\Omega_b))}
  \end{equation}
  From which we may deduce the regularity
  $$S \in L^2(0,T;H^{-1}(\Omega_b)) \implies (c_0 \overline p  + \alpha\nabla\cdot\overline{\vec{u}})_t \in L^2(0,T:H^{-1}(\Omega_b)).$$
\begin{remark} We note that, in general, we only have $S \in L^2(0,T;[H^1_{\#,*}(\Omega_b)]')$ for weak solutions, thus the requirement in the conditional above constitutes some {\em additional regularity} for the source $S$; if, in particular, $S \in L^2(0,T;L^2(\Omega_b))$, then the estimate (and regularity) above hold. It is also important to note that to obtain this estimate, we had to eliminate the interface terms---a source of complication for estimating these terms directly from the weak form. \end{remark}
    
    Lastly, and perhaps most interestingly, we will estimate the normal component $\gamma_0[\overline{\bu}_t\cdot \mathbf n]$ in a certain distributional sense. We have already obtained the trace regularity 
    $\gamma_0[\overline{\bu}_t\cdot \btau] \in L^2(0,T;L^2(\Gamma_I))$ for our solution. 
    We rewrite the earlier identity to obtain:
    \begin{align*}
      - ((\overline{\vec{u}} \cdot\vec{e}_3,\partial_tq))_{\Gamma_I}   =& ~
    ((c_0 \overline p  + \alpha\nabla\cdot\overline{\vec{u}} , \partial_t q))_{\Omega_b} - ((k\nabla \overline p ,\nabla q))_{\Omega_b} 
     + (({\overline{\vec{v}} \cdot\vec{e}_3,q}))_{\Gamma_I} \nn\\
   &+  (c_0 p_0  + \alpha\nabla\cdot \bu_0 ,q)_{\Omega_b}\big|_{t=0}  + ({\bu_0 \cdot\vec{e}_3,q})_{\Gamma_I}\big|_{t=0} 
   + (\langle S ,q\rangle )_{\Omega_b} 
\end{align*} 
Restricting to $q \in C_0^{\infty}([0,T]; H^1_{\#,*}(\Omega_b))$ and using the trace theorem, we obtain the estimate:
\begin{align}
\big|((\overline{\vec{u}} \cdot\vec{e}_3,\partial_tq))_{\Gamma_I} \big| \lesssim &~\big[||\overline p||_{L^2(0,T;L^2(\Omega_b))}+||\overline{\bu}||_{L^2(0,T;L^2(\Omega_b))}\big]||q_t||_{L^2(0,T;L^2(\Omega_b))}\\ \nn
 & + \big[||\overline p||_{L^2(0,T;H^1_{\#,*}(\Omega_b))}+||S||_{L^2(0,T;[H^1_{\#,*}(\Omega_b)]')}\big]||q||_{L^2(0,T;H^1_{\#,*}(\Omega_b))} \\ \nn
 & + ||\overline{\bv}||_{L^2(0,T;\mathbf H^1_{\#,*}(\Omega_f))}||q||_{L^2(0,T;H^1_{\#,*}(\Omega_b))}
\end{align}
Invoking the energy estimate on the solution \eqref{eidentn**}, we see that, indeed, $\overline{\vec{u}}_t \cdot\vec{e}_3$ does exists as an element in a dual space. Indeed, the estimate above yields
\begin{equation}
\gamma_0[\overline{\vec{u}}_t \cdot\vec{e}_3] \in \big[\gamma_0[W]\big]',
\end{equation}
where $W=H^1(0,T;L^2(\Omega_b))\cap L^2(0,T;H^1_{\#,*}(\Omega_b)).$ 
This is to say that for the space $W$ above, the trace in question resides in the dual of the {\em trace space} for $W$.

We conclude this short section by noting that the above ``regularities" of the time derivatives associated to the weak form in Definition \ref{weaksols} are non-standard; they are convoluted by the coupled nature of the problem. In short, these time derivatives all exist {\em in some} dual sense, but are not the standard spaces associated to the uncoupled, individual equations. This is precisely at issue in showing additional regularity, and, ultimately uniqueness (considering zero data).

\subsection{Finishing the Proof of Theorem \ref{th:main2}}
We have now obtained weak solutions---with estimates, as described in Theorem \ref{th:main2}---for a fixed $c_0>0$. To finish the proof of our second main theorem on weak solutions, we must obtain a weak solution when $c_0=0$. We will do this through a limiting procedure, as was introduced in \cite{bw} and alluded to in \cite{showfiltration}. Let $\{c_0^n\} \searrow 0$ with $0 \le c_0^n \le 1$. In this instance we will not take smooth approximants, but rather a sequence of weak solutions. 

Fix initial data of the form
$$\bu(0)=\bu_0 \in \mathbf U,~~ \bu_t(0)=\bu_1 \in \mathbf L^2(\Omega_b),~~ [c_0^np+\nabla \cdot \bu](0)=d_0 \in L^2(\Omega_b),~~\bv(0)=\bv_0 \in \bV.$$
Thus, the initial datum $[\bu_0,\bu_1,d_0,\bv_0]$ is fixed for each $c_0^n$ as $c_0^n \searrow 0$. 
\begin{remark}
We note that above, specifying $d_0 \in L^2(\Omega_b)$ and $\bu_0 \in \mathbf U$ is equivalent to specifying $c_0^np(0)$. However, we lose the ability to specify $p(0)$ in the limit as $c_0^n \to 0$, as we shall observe below. This is in order to retain compatibility between $d_0$ which will have the property that $d_0 = \nabla \cdot \bu_0$ when $c_0=0$. 
\end{remark}
Now, each particular value of $c_0^n$ corresponds to a particular weak solution $[\bu^n,p^n,\bv^n] \in \mathcal V_{\text{sol}}$ satisfying Definition \ref{weaksols} and guaranteed to exist by the proof in the preceding section. In particular, recall that    $\mathcal{V}_\text{sol} = \mathcal{V}_b\times \mathcal{Q}_b\times\mathcal{V}_f,$ with
\begin{align*}
    \mathcal{V}_b &= \{\vec{u} \in L^\infty(0,T;\vec{U}) ~:~ \vec{u} \in W^{1,\infty}(0,T;\vec{L}^2(\Omega_b))\}, \\
    \mathcal{Q}_b &= \left\{p \in L^2(0,T; H^1_{\#,*}(\Omega_b))~:~c_0^{1/2}p \in L^\infty(0,T;L^2(\Omega_b))\right\}, \\
    \mathcal{V}_f &= L^\infty(0,T;\vec{V}) \cap L^2(0,T; \vec{H}^1_{\#,\ast}(\Omega_f) \cap \vec{V}).
\end{align*}

We obtain for our solution, $[\bu^n,p^n,\bv^n] \in \mathcal V_{\text{sol}}$, that
\begin{align}\label{weaknn}
    -&~ \rho_b((\vec{u}_t^n,\bxi_t))_{\Omega_b} + ((\sigma_b(\vec{u}^n,p^n), \nabla \bxi))_{\Omega_b} - ((c^n_0 p^n + \alpha\nabla\cdot\vec{u}^n, \partial_t q))_{\Omega_b} + ((k\nabla p^n,\nabla q))_{\Omega_b} \nn\\
    &- \rho_f((\vec{v}^n,\bzeta_t))_{\Omega_f} + 2\nu((\vec{D}(\vec{v}^n),\vec{D}(\bzeta)))_{\Omega_f} + (({p^n,(\bzeta-\bxi)\cdot\vec{e}_3}))_{\Gamma_I} - (({\vec{v}^n\cdot\vec{e}_3,q}))_{\Gamma_I} \nn\\
    &- (({\vec{u}^n\cdot\vec{e}_3,\partial_tq}))_{\Gamma_I} + \beta (({\vec{v}^n\cdot\btau,(\bzeta - \bxi)\cdot\btau}))_{\Gamma_I} + \beta(({\vec{u}^n\cdot\btau, (\bzeta_t - \bxi_t)\cdot\btau}))_{\Gamma_I} \nn\\
    =&~ \rho_b(\bu_1,\bxi)_{\Omega_b}\big|_{t=0} + (d_0,q)_{\Omega_b}\big|_{t=0} + \rho_f(\bv_0,\bzeta)_{\Omega_f}\big|_{t=0} + ({\bu_0\cdot\vec{e}_3,q})_{\Gamma_I}\big|_{t=0} \nn\\
    &~- \beta({\bu_0\cdot\btau, (\bzeta - \bxi)\cdot\btau})_{\Gamma_I}\big|_{t=0}+ ((\vec{F}_b,\bxi))_{\Omega_b} + (\langle S,q\rangle )_{\Omega_b} + (\langle \vec{F}_f,\bzeta\rangle )_{\Omega_f}
\end{align}
for all $[\bxi,q,\bzeta]^T \in \Vtest$, as well  as the following estimate
\begin{align}\label{eidentnn}
    \|\vec{u}_t^n(t)\|_{0,b}^2 +& \|\vec{u}^n(t)\|_E^2 + c^n_0\|p^n(t)\|_{0,b}^2 + \|\vec{v}^n(t)\|_{0,f}^2 \nn\\
    & + k\int_0^t \|\nabla p^n\|_{0,b}^2 d\tau + 2\nu\int_0^t\|\vec{D}(\vec{v}^n)\|_{0,f}^2d\tau + \int_0^t \beta\|(\vec{v}^n - \vec{u}^n_t)\cdot\btau\|_{\Gamma_I}^2d\tau \nn\\
    \lesssim &~ \|\bu_1\|_{0,b}^2 + \|\bu_0\|_E^2 + c_0^n \|p_0\|_{0,b}^2 + \|\bv_0\|_{0,f}^2 \\ \nn &+\int_0^t[||\mathbf F_b||_{\vec{L}^2(\Omega_b)}^2+||\mathbf F_f||_{(\vec{H}^1_{\#,\ast}(\Omega_f)\cap \vec{V})'}^2+||S||^2_{[H^1_{\#,*}(\Omega_b)]'}]d\tau,
\end{align}
We now note that $[c_0^n]^{1/2}p^n$ is bounded in $L^{\infty}(0,T;L^2(\Omega_b))$ (from the energy estimate). Thus $\sqrt{c_0^n}p^n$ has a weak-$*$ subsequential limit. Additionally, restricting to that subsequence, and relabeling, we see that $p^n$ is bounded in $L^2(0,T; H^1_{\#,*}(\Omega_b))$ and so also has a weak subsequential limit, say $\bar p$. Again, restricting to this further subsequence and relabeling, we have a coincident limit for the sequence $\sqrt{c_0^n}p^n$ in $L^{\infty}(0,T;L^2(\Omega_b))\cap L^2(0,T;H^1_{\#,*}(\Omega_b))$. But since $c_0^n \searrow 0$ and the sequence $p^n$ is bounded in $L^2(0,T;H^1_{\#,*}(\Omega_b))$, it must be the case that $\sqrt{c_0^n}p^n$ goes to zero in both senses (by uniqueness of limits). 

Returning to \eqref{eidentnn}, we obtain the uniform-in-$n$ bound for the sequence $\{\bu^n,\bu^n_t,p^n,\bv^n\}$:
\begin{align}\label{eidentnn*}
    \|\vec{u}_t^n(t)\|_{0,b}^2 +& \|\vec{u}^n(t)\|_E^2  + \|\vec{v}^n(t)\|_{0,f}^2 \nn\\
    & + k\int_0^t \|\nabla p^n\|_{0,b}^2 d\tau + 2\nu\int_0^t\|\vec{D}(\vec{v}^n)\|_{0,f}^2d\tau + \int_0^t \beta\|(\vec{v}^n - \vec{u}^n_t)\cdot\btau\|_{\Gamma_I}^2d\tau \nn\\
    \lesssim &~ \|\bu_1\|_{0,b}^2 + \|\bu_0\|_E^2  + \|\bv_0\|_{0,f}^2 \\ \nn &+\int_0^t[||\mathbf F_b||_{\vec{L}^2(\Omega_b)}^2+||\mathbf F_f||_{(\vec{H}^1_{\#,\ast}(\Omega_f)\cap \vec{V})'}^2+||S||^2_{[H^1_{\#,*}(\Omega_b)]'}]d\tau,
\end{align}
As in the previous section, it is immediate to obtain the following weak-$*$ (further subsequential) limits and limit points, upon relabeling:
\begin{align*}
\bu^n \rightharpoonup^* &~\overline{\bu} \in L^{\infty}(0,T;\mathbf U)\\
\bu_t^n \rightharpoonup^* &~\overline{\bu}_t \in L^{\infty}(0,T;\mathbf L^2(\Omega_b))\\
\bv^n \rightharpoonup^* &~\overline{\bv} \in L^{\infty}(0,T;\mathbf L^2(\Omega_f))\\
\bv^n \rightharpoonup &~\overline{\bv} \in L^{2}(0,T;\mathbf  H^1_{\#,*}(\Omega_f))\\
p^n  \rightharpoonup &~\overline{p} \in L^{2}(0,T;H^1_{\#,*}(\Omega_b))\\
(\bv^n-\bu_t^n)\cdot \btau \rightharpoonup &~ \overline{\mathbf v}-{\mathbf u}_t\cdot \btau \in L^2(0,T;L^2(\Gamma_I)).
\end{align*}
From above, we have that $c_0^np^n \rightharpoonup^* 0 \in L^{\infty}(0,T;L^2(\Omega_b))$. Thus, since $q_t \in C_0^1([0,T);H^1_{\#,*}(\Omega_b) \subset L^1(0,T;L^2(\Omega_b))$
$$((c^n_0 p^n + \alpha\nabla\cdot\vec{u}^n, \partial_t q))_{\Omega_b} \to ((\alpha\nabla\cdot\vec{u}, \partial_t q))_{\Omega_b}.$$
The remaining terms converge precisely as in Section \ref{weaksolsec}. This yields the following weak solution
\begin{align}
    -&~ \rho_b((\overline{\vec{u}}_t ,\bxi_t))_{\Omega_b} + ((\sigma_b(\overline{\vec{u}} ,\overline p ), \nabla \bxi))_{\Omega_b} - ((c_0 \overline p  + \alpha\nabla\cdot\overline{\vec{u}} , \partial_t q))_{\Omega_b} + ((k\nabla \overline p ,\nabla q))_{\Omega_b} \nn\\
    &- \rho_f((\overline{\bv} ,\bzeta_t))_{\Omega_f} + 2\nu((\vec{D}(\overline{\vec{v}} ),\vec{D}(\bzeta)))_{\Omega_f} + (({\overline p ,(\bzeta-\bxi)\cdot\vec{e}_3}))_{\Gamma_I} - (({\overline{\vec{v}} \cdot\vec{e}_3,q}))_{\Gamma_I} \nn\\
    &- ((\overline{\vec{u}} \cdot\vec{e}_3,\partial_tq))_{\Gamma_I} + \beta (({\overline{\vec{v}} \cdot\btau,(\bzeta - \bxi)\cdot\btau}))_{\Gamma_I} + \beta(({\overline{\vec{u}} \cdot\btau, (\bzeta_t - \bxi_t)\cdot\btau}))_{\Gamma_I} \nn\\
    =&~ \rho_b(\bu_1 ,\bxi)_{\Omega_b}\big|_{t=0} + (c_0 p_0  + \alpha\nabla\cdot \bu_0 ,q)_{\Omega_b}\big|_{t=0} + \rho_f(\bv_0 ,\bzeta)_{\Omega_f}\big|_{t=0} + ({\bu_0 \cdot\vec{e}_3,q})_{\Gamma_I}\big|_{t=0} \nn\\
    &~- \beta({\bu_0 \cdot\btau, (\bzeta - \bxi)\cdot\btau})_{\Gamma_I}\big|_{t=0}+ (( \vec{F} _b,\bxi))_{\Omega_b} + (\langle S ,q\rangle )_{\Omega_b} + (\langle \vec{F} _f,\bzeta\rangle )_{\Omega_f}
\end{align}
and estimate
\begin{align}
    \|\overline{\vec{u}}_t(t)\|_{0,b}^2 +& \|\overline{\vec{u}}(t)\|_E^2 + c_0\|\overline p(t)\|_{0,b}^2 + \|\overline{\vec{v}}(t)\|_{0,f}^2 \nn\\
    & + k\int_0^t \|\nabla \overline p\|_{0,b}^2 d\tau + 2\nu\int_0^t\|\vec{D}(\overline{\vec{v}})\|_{0,f}^2d\tau + \int_0^t \beta\|(\overline{\vec{v}} - \overline{\vec{u}}_t)\cdot\btau\|_{\Gamma_I}^2d\tau \nn\\
    \lesssim &~ \|\bu_1\|_{0,b}^2 + \|\bu_0\|_E^2 + c_0 \|p_0\|_{0,b}^2 + \|\bv_0\|_{0,f}^2 \\ \nn &+\int_0^t[||\mathbf F_b||_{\vec{L}^2(\Omega_b)}^2+||\mathbf F_f||_{(\vec{H}^1_{\#,\ast}(\Omega_f)\cap \vec{V})'}^2+||S||^2_{[H^1_{\#,*}(\Omega_b)]'}]d\tau.
\end{align}
The rest of the analysis (and discussion) in Section \ref{sols1} and Section \ref{duals} now apply to the constructed weak solution for $c_0=0$. We have thus concluded the proof of Theorem \ref{th:main2}.

\section{Appendix}\label{app2}

We include a statement of the version of Babu\v{s}ka-Brezzi which is invoked in the maximality portion of the generation argument.See \cite{ciarletbook,kesavan} for standard presentations, though here we refer to \cite{avalos*}.
\begin{theorem}[Babu\v{s}ka-Brezzi]\label{bbt}
    Let $\Sigma$ and $V$ be real Hilbert spaces. Let $a:\Sigma\times\Sigma \to \mathbb{R}$ and $b:\Sigma\times V \to \mathbb{R}$ be continuous bilinear forms. Let
    \[
        Z = \{\sigma \in \Sigma ~:~ b(\sigma,v) = 0 \text{ for every } v \in V\}.
    \]
    Assume that $a(\cdot,\dot)$ is $Z$-elliptic and that $b(\cdot,\cdot)$ satisfies the {\it inf-sup} condition: there exists a $\beta>0$ such that for every $v \in V$, we have
    \begin{equation}
        \sup_{\substack{\tau \in \Sigma\\\tau \neq 0}} \frac{b(\tau,v)}{\|\tau\|_\Sigma} \geq \beta \|v\|_V.
    \end{equation}
    Let $\kappa \in \Sigma$ and let $\ell \in V$. Then, there exists a unique pair $(\sigma,u) \in \Sigma \times V$ such that
    \begin{equation}
        \begin{cases}
        a(\sigma,\tau) + b(\tau,u) = (\kappa,\tau)_\Sigma \quad &\forall\, \tau \in \Sigma,\\
        \phantom{a(\sigma,\tau) +}~ b(\sigma,v) = (\ell,v)_V \quad &\forall\, v \in V.
        \end{cases}
    \end{equation}
\end{theorem}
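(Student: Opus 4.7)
The plan is to prove the Babu\v{s}ka--Brezzi theorem by converting the mixed system into two sequential problems, each solved by a classical tool: Lax--Milgram for the primal problem restricted to $Z$, and the closed range theorem (applied to an operator encoding $b$) for the recovery of the multiplier. The key conceptual step is to recognize that the inf-sup condition is precisely the functional-analytic statement that a certain operator has closed range with the expected range/kernel structure.

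First I would introduce the bounded linear operator $B:\Sigma \to V$ defined via the Riesz representation by $(B\tau,v)_V = b(\tau,v)$ for all $v\in V$, together with its adjoint $B^\ast : V \to \Sigma$. By construction $\ker B = Z$, and the inf-sup condition rewrites as
\begin{equation*}
\|B^\ast v\|_\Sigma \;\geq\; \beta\,\|v\|_V \qquad \forall\, v\in V.
\end{equation*}
This bound implies $B^\ast$ is injective with closed range, and by the closed range theorem this is equivalent to $B$ being surjective onto $V$, with a bounded right inverse from $V$ to $(\ker B)^\perp = Z^\perp$.

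Next I would construct $\sigma$ in two pieces. Using surjectivity of $B$, pick $\sigma_0 \in Z^\perp$ with $B\sigma_0 = \ell$, i.e., satisfying the second equation $b(\sigma_0,v)=(\ell,v)_V$ for every $v\in V$, with the stability estimate $\|\sigma_0\|_\Sigma \lesssim \beta^{-1}\|\ell\|_V$. Seek $\sigma = \sigma_0 + \sigma_1$ with $\sigma_1 \in Z$. Substituting into the first equation and restricting test functions to $\tau \in Z$ (so that $b(\tau,u)=0$), one obtains the problem: find $\sigma_1 \in Z$ with
\begin{equation*}
a(\sigma_1,\tau) \;=\; (\kappa,\tau)_\Sigma - a(\sigma_0,\tau) \qquad \forall\, \tau \in Z.
\end{equation*}
Since $Z$ is a closed subspace of $\Sigma$, it is itself a Hilbert space, and $Z$-ellipticity plus continuity of $a$ on $\Sigma\times\Sigma$ allow direct application of Lax--Milgram to produce a unique $\sigma_1\in Z$ with the usual a priori estimate. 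Setting $\sigma = \sigma_0+\sigma_1$ then solves $b(\sigma,v)=(\ell,v)_V$ (since the $\sigma_1$-piece lies in $Z=\ker B$) and matches the first equation on test functions in $Z$.

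Finally I would recover the Lagrange multiplier $u$. Define the residual functional $R \in \Sigma'$ by $R(\tau) := (\kappa,\tau)_\Sigma - a(\sigma,\tau)$ for $\tau\in\Sigma$. By construction $R$ vanishes on $Z=\ker B$, so $R$ belongs to $(\ker B)^\circ = \overline{\mathrm{Ran}(B^\ast)}$, which by the closed range argument is exactly $\mathrm{Ran}(B^\ast)$. Hence there exists $u\in V$ with $R = B^\ast u$, that is,
\begin{equation*}
b(\tau,u) \;=\; (\kappa,\tau)_\Sigma - a(\sigma,\tau) \qquad \forall\, \tau \in \Sigma,
\end{equation*}
which is the first equation of the system. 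Uniqueness of $u$ follows from injectivity of $B^\ast$, and uniqueness of $\sigma$ follows by setting $\kappa=0$, $\ell=0$, testing the first equation with $\tau=\sigma\in Z$ and invoking $Z$-ellipticity. I anticipate that the main subtlety, and the only nontrivial step, will be the precise invocation of the closed range theorem to pass between the inf-sup estimate on $B^\ast$ and surjectivity of $B$, together with the identification of $\mathrm{Ran}(B^\ast)$ with the annihilator of $\ker B$; every other step reduces to Riesz representation, Lax--Milgram on the closed subspace $Z$, and continuity bookkeeping.
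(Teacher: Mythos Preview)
Your proof is correct and follows the standard functional-analytic route to the Babu\v{s}ka--Brezzi theorem: encode $b$ as an operator $B$, use the inf-sup condition to obtain surjectivity of $B$ via the closed range theorem, split $\sigma$ into a $Z^\perp$-piece and a $Z$-piece, apply Lax--Milgram on $Z$, and recover $u$ from the annihilator identification $(\ker B)^\circ = \mathrm{Ran}(B^\ast)$.

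Note, however, that the paper does \emph{not} prove this theorem: it is stated in the Appendix as a cited result, with references to \cite{ciarletbook,kesavan,avalos*} for the proof. So there is no paper-proof to compare against; your argument simply supplies what the paper imports from the literature, and it does so correctly.
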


\scriptsize


\begin{thebibliography}{99}

\bibitem{yotovLagrange}
\newblock I. Ambartsumyan, E. Khattatov, I. Yotov, P. Zunino,
\newblock A Lagrange multiplier method for a Stokes-Biot fluid-poroelastic structure interaction model,
\newblock {\em Numerische Mathematik}, 513--553, 2018.


\bibitem{yotovNon-NewtBS}
\newblock I. Ambartsumyan, V.J. Ervin, T.  Nguyen, and I. Yotov,
\newblock A nonlinear Stokes-Biot model for the interaction of a non-Newtonian fluid with poroelastic media,
\newblock {\em ESAIM: Mathematical Modelling and Numerical Analysis}, 53(6), pp.1915-1955, 2019.

\bibitem{auriault}
\newblock J.L. Auriault, J.L. and E. Sanchez-Palencia,
\newblock Etude du comportement macroscopique d'un milieu poreux satur\'e d\'eformable,
\newblock {\em Journal de M\'ecanique}, 16(4), pp.575--603. 1977.

\bibitem{avalos*} 
\newblock G. Avalos and M. Dvorak, 
\newblock A new maximality argument for a coupled fluid-structure interaction, with implications for a divergence-free finite element method. 
\newblock {\em Applicationes Mathematicae}, 3(35), pp.259--280, 2008.

\bibitem{AGM} 
\newblock G. Avalos, P.G. Geredeli, and B. Muha,
\newblock Wellposedness, spectral analysis and asymptotic stability of a multilayered heat-wave-wave system. 
\newblock {\em Journal of Differential Equations}, 269(9), pp.7129--7156, 2020.


\bibitem{newGeorge} G. Avalos, E. Gurvich, S. McKnight, and J.T. Webster, 2024. 

\bibitem{avalos}
\newblock G. Avalos, R. Triggiani,
\newblock The coupled PDE system arising in fluid/structure interaction. I. Explicit semigroup generator and its spectral properties,
\newblock {\em Fluids and waves, Contemp. Math} (440),  pp.15--54, 2007.

\bibitem{avalos08}
\newblock G. Avalos, I. Lasiecka, R. Triggiani,
\newblock Higher regularity of a coupled parabolic-hyperbolic fluid-structure interactive system,
\newblock {\em  Georgian Mathematical Journal}, pp.403--437, 2008.



\bibitem{earlynumerics}
\newblock S. Badia, A. Quaini, and A. Quarteroni, 2009. 
\newblock Coupling Biot and Navier-Stokes equations for modelling fluid-poroelastic media interaction. 
\newblock {\em J. Computational Physics}, 228(21),pp.7986--8014.

\bibitem{ball}
\newblock J. M. Ball,
\newblock Strongly continuous semigroups, weak solutions, and the variation of constants formula
\newblock {\em Proceedings of the American Mathematical Society}, pp. 370--373, 1977


\bibitem{biot}
\newblock M.A. Biot,
\newblock General theory of three-dimensional consolidation.
\newblock \emph{Journal of Applied Physics}, 12(2):155--164, 1941.

\bibitem{biot2}
\newblock M.A. Biot and D.G. Willis,
\newblock The elastic coefficients of the theory of consolidation. 
\newblock {\em J. ASME}, 1957.

\bibitem{biot3}
 M.A. Biot,
\newblock Theory of elasticity and consolidation for a porous anisotropic solid.
\newblock {\em Journal of Applied Physics}, 26(2), pp.182--185, 1955.


\bibitem{bw}
\newblock L. Bociu, J.T. Webster,
\newblock Nonlinear quasi-static poroelasticity.
\newblock {\em Journal of Differential Equations}, 296, pp.242--278.

\bibitem{multilayered}
\newblock L. Bociu, S. Canic, B. Muha, and J. T. Webster. 
\newblock Multilayered poroelasticity interacting with stokes flow. 
\newblock \emph{SIAM Journal on Mathematical Analysis}, 53(6) pp.6243--6279, 2021.

\bibitem{bmw}
\newblock L. Bociu, B. Muha, and J. T. Webster. 
\newblock Weak Solutions in Nonlinear Poroelasticity with Incompressible Constituents.
\newblock {\em Nonlinear Analysis: Real World Applications}, 67, p.103563, 2022.



\bibitem{bgsw}
\newblock L. Bociu, G. Guidoboni, R. Sacco, and J. T. Webster,
\newblock Analysis of Nonlinear Poro-Elastic and Poro-Visco-Elastic Models,
\newblock {\em Archives of Rational Mechanics and Analysis}, 222 (2016), pp.1445--1519

\bibitem{bmbm} Buka\v{c}, Martina, Boris Muha, and Abner J. Salgado. Analysis of a diffuse interface method for the Stokes-Darcy coupled problem. {\em ESAIM: Mathematical Modelling and Numerical Analysis}, 57, no. 5 (2023): 2623--2658.

\bibitem{bukac}
Buka\v{c}, M., Yotov, I. and Zunino, P., 2015. An operator splitting approach for the interaction between a fluid and a multilayered poroelastic structure. Numerical Methods for Partial Differential Equations, 31(4), pp.1054-1100.

\bibitem{yotov2015}
\newblock M. Buka\v{c}, I. Yotov, R. Zakerzadeh, and P. Zunino,
\newblock Partitioning strategies for the interaction of a fluid with a poroelastic material based on a Nitsche's coupling approach,
\newblock {\em Computer Methods in Applied Mechanics and Engineering}, 292, pp.138-170, 2015.

\bibitem{canicbio1}
\newblock S. Canic, C.J. Hartley, D. Rosenstrauch, J. Tambaca, G. Guidoboni, A. Mikelic,
\newblock Blood Flow in Compliant Arteries: An Effective Viscoelastic Reduced Model,
\newblock {\em Numerics and Experimental Validation. Ann. Biomed. Eng.}; 34: 575--592 (2006).

\bibitem{sunny2} 
S. Canic, Y. Wang, and M. Bukac. A Next-Generation Mathematical Model for Drug Eluting Stents. SIAM J. Appl. Math., 81(4), 1503?1529 2021. 


\bibitem{cao} Cao, Y., Chen, S. and Meir, A.J., 2014. Steady flow in a deformable porous medium. Mathematical Methods in the Applied Sciences, 37(7), pp.1029-1041.

\bibitem{applied2} Castelletto, N., Klevtsov, S., Hajibeygi, H. and Tchelepi, H.A., 2019. Multiscale two-stage solver for Biot's poroelasticity equations in subsurface media. {\em Computational Geosciences}, 23, pp.207--224.

\bibitem{yotovMultipoint}
\newblock S. Caucao, T.  Li,  and I. Yotov, 
\newblock A multipoint stress-flux mixed finite element method for the Stokes-Biot model,
\newblock {\em Numerische Mathematik}, 152(2), pp.411-473, 2022.

\bibitem{filtration2}
\newblock Cesmelioglu, A.
\newblock Analysis of the coupled Navier-Stokes/Biot problem. 
\newblock {\em Journal of Mathematical Analysis and Applications}, 456(2), pp.970--991, 2017.

\bibitem{filtnum}
Cesmelioglu, A. and Chidyagwai, P., 2020. Numerical analysis of the coupling of free fluid with a poroelastic material. {\em Numer. Meth. PDEs}, 36(3).

\bibitem{infsup2}
\newblock A. Cesmelioglu, V. Girault, B. Rivi\`ere,
\newblock Time-Dependent Coupling of Navier-Stokes and Darcy Flows,
\newblock {\em ESAIM: Mathematical Modelling and Numerical Analysis}, 2013.

\bibitem{ciarletbook}
\newblock P.G. Ciarlet,
\newblock {\em Linear and Nonlinear Functional Analysis},
\newblock SIAM, 2013.

\bibitem{newhomog} Collis, J., Brown, D.L., Hubbard, M.E. and O'Dea, R.D., 2017. Effective equations governing an active poroelastic medium. {\em Proceedings of the Royal Society A: Mathematical, Physical and Engineering Sciences}, 473(2198), p.20160755.

\bibitem{coussy}
\newblock O. Coussy,
\newblock {\em Poromechanics},
\newblock John Wiley and Sons, (2004).

\bibitem{poroapps}
\newblock E. Detournay, A.H.-D. Cheng,
\newblock Fundamentals of poroelasticity.
\newblock {\em Analysis and design methods}, Elsevier, pp. 113--171, 1993.

\bibitem{evans} Evans, L.C., 2022. Partial differential equations (Vol. 19). {\em American Mathematical Society.}

\bibitem{gilbert4} M. Fang, R. P. Gilbert, A. Panchenko and A. Vasilic, Homogenizing the time-harmonic acoustics of bone: The monophasic case. Mathematical and Computer Modelling, 46, 3-4, (2007), 331--340.

\bibitem{numerical2} Fred, V., Rodrigo, C., Gaspar, F. and Kundan, K., 2021. Guest editorial to the special issue: computational mathematics aspects of flow and mechanics of porous media. {\em Computational Geosciences}, 25(2), pp.601--602.

\bibitem{gilbert1} R. P. Gilbert and A. Panchenko. Acoustics of a stratified poroelastic composite. Zetitschrift fur Analysis und ihre Anwendungen, 18 (1999), 977--1001

\bibitem{gilbert2} R. P. Gilbert and A. Panchenko. Effective acoustic equations for a nonconsolidated medium with microstructure. In: Acoustics, mechanics and the related topics of mathematical analysis, World Scientific, River Edge, NJ, (2002), 164-170.

\bibitem{gilbert3} R. P. Gilbert, A. Panchenko, and A. Vasilic. Homogenizing acoustics of cancellous bone with an interstitial non-Newtonian fluid. Nonlinear Analysis: Theory, Methods, and Applications, 74 (2011), 1005-1018.


\bibitem{infsup1}
 V. Girault, B. Rivi\`ere,
\newblock DG approximation of coupled Navier-Stokes and Darcy equations by Beavers-Joseph-Saffman interface condition,
\newblock {\em SIAM}, 2009.

\bibitem{Grisvard}
\newblock P. Grisvard,
\newblock {\em Elliptic Problems in Nonsmooth Domains},
\newblock SIAM, (2011).


\bibitem{poroplate}
\newblock E. Gurvich and J. T. Webster
\newblock Weak solutions for a poro-elastic plate system.
\newblock {\em Applicable Analysis}, 2021.



\bibitem{thermo}
\newblock D.B. Henry, A. Perissinitto Jr., and O. Lopes,
\newblock On the essential spectrum of a semigroup of thermoelasticity,
\newblock {\em Nonlinear Analysis: Theory, Methods \& Applications}, 21 (1993), pp.65--75.

\bibitem{showrecent} Hosseinkhan, A. and Showalter, R.E., 2023. Semilinear Degenerate Biot-Signorini System. SIam J. Mathematical Analysis, 55(5), pp.5643--5665.

\bibitem{jnr} Gahn, M., J\"ager, W. and Neuss-Radu, M., 2022. Derivation of Stokes-plate-equations modeling fluid flow interaction with thin porous elastic layers. Applicable Analysis, 101(12), pp.4319-4348.

\bibitem{kesavan} 
\newblock S. Kesavan,
\newblock {\em Topics in functional analysis and applications,} 
\newblock New Age International Publishers, Vol. 3, 2019.

\bibitem{rectplate} Kuan, J., \v{C}ani\'c, S. and Muha, B., 2023. Existence of a weak solution to a regularized moving boundary fluid-structure interaction problem with poroelastic media. Comptes Rendus. M\'ecanique, 351(S1), pp.1--30.

\bibitem{sunny3} 
Jeffrey Kuan, Suncica Canic and Boris Muha Fluid-poroviscoelastic structure interaction problem with nonlinear coupling. Submitted 2023. \url{https://arxiv.org/abs/2307.16158}

\bibitem{redbook}
\newblock I. Lasiecka and R. Triggiani, 
\newblock {\em Control theory for partial differential equations: Volume 1}
\newblock Cambridge University Press, 2000.

\bibitem{yotovBJS}
\newblock W.J.Layton, F. Schieweck, and I.  Yotov,
\newblock Coupling fluid flow with porous media flow. 
{\em SIAM Journal on Numerical Analysis}, 40(6), pp.2195-2218, 2002.

%\bibitem{biobiot} Lee, J.J., Piersanti, E., Mardal, K.A. and Rognes, M.E., 2019. A mixed finite element method for nearly incompressible multiple-network poroelasticity. {\em SIAM J. on Scientific Computing}, 41(2), pp.A722--A747.


\bibitem{yotovNSB}
\newblock Tongtong Li, Sergio Caucao, and Ivan Yotov,
\newblock An augmented fully-mixed formulation for the quasistatic Navier--Stokes--Biot model,
\newblock {\em arXiv preprint arXiv}:2209.02894, 2022.


\bibitem{yotov2022}
\newblock T. Li, and I. Yotov, 
\newblock A mixed elasticity formulation for fluid-poroelastic structure interaction,
\newblock {\em ESAIM: Mathematical Modelling and Numerical Analysis}, 56(1), pp.1-40, 2022.



\bibitem{lionsmag}
\newblock J.L. Lions, E. Magenes
\newblock {\em Non-homogeneous boundary value problems and applications I}
\newblock Springer-Verlag Berlin Heidelberg New York, Vol. 1, 1972

\bibitem{mikelic}
\newblock A.~Marciniak--Czochra, A.~Mikeli\'c.
\newblock A rigorous derivation of the equations for the clamped Biot--Kirchhoff--Love poroelastic plate.
\newblock \emph{Archive for Rational Mechanics and Analysis, 215}(3):1035--1062, 2015.

\bibitem{MRT} Mardal, K.A., Rognes, M.E. and Thompson, T.B., 2021. Accurate discretization of poroelasticity without Darcy stability: Stokes-Biot stability revisited. BIT Numerical Mathematics, 61, pp.941-976.

\bibitem{mclean} McLean, W.C.H., 2000. Strongly elliptic systems and boundary integral equations. {\em Cambridge University Press}.


\bibitem{mikelicBJS}
\newblock A. Mikelic, W. J\"{a}ger,
\newblock On the interface boundary condition of Beavers, Joseph, and Saffman
\newblock {\em SIAM Journal on Applied Mathematics}, 60(4), pp.1111-1127., 2000
 


\bibitem{applied}
Murad, M.A. and Cushman, J.H., 1996. Multiscale flow and deformation in hydrophilic swelling porous media. {\em International Journal of Engineering Science}, 34(3), pp.313-338.




\bibitem{pazy}
\newblock A.~Pazy,
\newblock {\em Semigroups of linear operators and applications to partial differential equations},
\newblock Springer Science \& Business Media, 2012.



\bibitem{yotovBSeye}
\newblock R. Ruiz-Baier, M.  Taffetani, H.D. Westermeyer, and I. Yotov, 
\newblock The Biot-Stokes coupling using total pressure: Formulation, analysis and application to interfacial flow in the eye. 
\newblock {\em Computer Methods in Applied Mechanics and Engineering}, 389, p.114384, 2022.

\bibitem{galdi}
\newblock G. Galdi,
\newblock {\em An introduction to the mathematical theory of the Navier-Stokes equations: Steady-state problems.}\\
\newblock Springer Science \& Business Media, 2011.


\bibitem{wheel} Russell, T.F. and Wheeler, M.F., 1983. Finite element and finite difference methods for continuous flows in porous media. In: {\em The mathematics of reservoir simulation} (pp. 35--106). Society for Industrial and Applied Mathematics.

\bibitem{GGbook}
\newblock Sacco,  R., Guidoboni, G., and Mauri, A.G., 2019.
\newblock {\em A Comprehensive Physically Based Approach to Modeling in Bioengineering and Life Sciences,}
\newblock Elsevier Academic Press.

\bibitem{Sanchez-Palencia}
\newblock Sanchez-Palencia, E., 1980. 
\newblock Non-homogeneous media and vibration theory. 
\newblock {\em Lecture Notes in Physics} 127, Springer-Verlag

\bibitem{indiana}
Showalter, R.E., 1974. Degenerate evolution equations and applications. {\em Indiana University Mathematics J.}, 23(8), pp.655-677.


\bibitem{show2000}
\newblock R.E.~Showalter.
\newblock Diffusion in poro-elastic media.
\newblock \emph{Journal of mathematical analysis and applications, 251}(1):310--340, 2000. 

\bibitem{showfiltration}
\newblock R.E. Showalter
\newblock Poroelastic filtration coupled to Stokes flow
\newblock {\em Control theory of partial differential equations}, (2005), pp.243--256

\bibitem{showsu} Showalter, R.E. and Su, N., 2001. Partially saturated flow in a poroelastic medium. Discrete and Continuous Dynamical Systems Series B, 1(4), pp.403-420.

\bibitem{both} Storvik, E., Both, J.W., Nordbotten, J.M. and Radu, F.A., 2022. A Cahn-Hilliard-Biot system and its generalized gradient flow structure. Applied Mathematics Letters, 126, p.107799.

\bibitem{temam}
\newblock R. Temam,
\newblock Navier-Stokes Equations: Theory and Numerical Analysis,
\newblock {American Mathematical Soc.}, Vol. 343, 2001 

\bibitem{temam1}
\newblock R. Temam,
\newblock Infinite-Dimensional Dynamical Systems in Mechanics and Physics
\newblock {\em Springer-Verlag}, New York, 1988.

\bibitem{mikelicnon} van Duijn, C.J. and Mikelic, A., 2019. Mathematical Theory of Nonlinear Single-Phase Poroelasticity. Preprint hal-02144933, Lyon June.


\bibitem{terzaghi}
\newblock K. von Terzaghi.
\newblock {\em Theoretical Soil Mechanics}
\newblock Wiley, New York, 1943

\bibitem{sunny1}
Yifan Wang, Suncica Canic, Martina Bukac, Charles Blaha, Shuvo Roy. Mathematical and Computational Modeling of a Poroelastic Cell Scaffold in a Bioartificial Pancreas. Fluids, vol. 7, issue 7, p. 222 (2022)




\end{thebibliography}
\end{document}